\documentclass[11pt,reqno]{amsart}

\usepackage{amscd,amsmath,amssymb,amsfonts,verbatim,graphicx,enumitem}
\usepackage[colorlinks]{hyperref}
\usepackage{slashed}

\newcommand{\fm}{\mathfrak{m}}

\renewcommand{\epsilon}{\varepsilon}
\renewcommand{\leq}{\leqslant}
\renewcommand{\geq}{\geqslant}

\def\XXint#1#2#3{{\setbox0=\hbox{$#1{#2#3}{\int}$ }
\vcenter{\hbox{$#2#3$ }}\kern-.6\wd0}}

\newtheorem{theorem}{Theorem}[section]

\newtheorem{lemma}[theorem]{Lemma}

\newtheorem{assumption}[theorem]{Assumption}

\theoremstyle{definition}
\newtheorem{definition}[theorem]{Definition}
\newtheorem{remark}[theorem]{Remark}

\numberwithin{equation}{section}

\textheight235mm
\textwidth171.5mm

\addtolength{\topmargin}{-16mm}
\addtolength{\oddsidemargin}{-22.25mm}
\addtolength{\evensidemargin}{-22.25mm}

\setcounter{figure}{0}

\begin{document}

\title{Asymptotics of high-codimensional area-minimizing currents in hyperbolic space
}

\author{Xumin Jiang}
\address{School of Sciences, Great Bay University, Dongguan 523000, China}
\email{xjiang@gbu.edu.cn}

\author{Jiongduo Xie}
\address{School of Mathematical Sciences\\
Shanghai Jiao Tong University\\
Shanghai, 200240, China}
\email{jiongduoxie@outlook.com
}
	
\date{\today}

\begin{abstract}
    We investigate the asymptotic behavior of high-codimensional area-minimizing locally rectifiable currents in hyperbolic space, addressing a problem posed by F.H. Lin \cite{Lin1989CPAM} and establishing     ``boundary regularity at infinity" results for such currents near their asymptotic boundaries under the standard Euclidean metric. Intrinsic obstructions to high-order regularity arise for odd-dimensional minimal surfaces, revealing a constraint dependent on the geometry of the asymptotic boundary. Our work advances the asymptotic theory of high-codimensional minimal surfaces in hyperbolic space.
\end{abstract}

\maketitle

\markboth{High codimensional area-minimizing currents in hyperbolic space}{ Xumin Jiang and Jiongduo Xie}

\setcounter{tocdepth}{3}
\tableofcontents

\section{Introduction}
In the upper half-plane model, the $\fm$-dimensional hyperbolic space is given by the set
\begin{align}
    \mathbb{H}^\fm = \bigl\{ (x', x^\fm) \in \mathbb{R}^{\fm-1} \times \mathbb{R}_+ \bigr\}, \qquad \fm \geq 2.
\end{align}
Set $\fm = n+k$, where $n\geq 2$ and $k\geq 1$ are natural numbers. Let $\Gamma$ be a closed $C^{1,\alpha}$ submanifold of dimension $n-1$ in $\mathbb{R}^{\fm-1}\times\{0\}$ for some constant $\alpha\in(0,1)$. In \cite{Anderson1982}, M.\ Anderson proved that there exists an area-minimizing, locally rectifiable $n$-current $T$, which is complete, without boundary, and asymptotic to $\Gamma$ at infinity. See also \cite{Anderson1983}.

Hardt and Lin \cite{Hardt&Lin1987} investigated the hypersurface case ($k=1$) and established the ``boundary regularity at infinity'' result: for any such hyperbolic-area-minimizing current $T$, the union of the support of $T$ (denoted $M$) and $\Gamma$—when endowed with the Euclidean metric—is a finite union of $C^{1,\alpha}$ hypersurfaces with boundary $\Gamma$ in a neighborhood of $\Gamma$. These hypersurfaces intersect $\mathbb{R}^n \times \{0\}$ orthogonally along $\Gamma$, and all interior singularities of $M$ are confined to a bounded region of $\mathbb{H}^\fm$. 
Lin \cite{Lin1989Invent} further established a higher-order boundary regularity result: if $\Gamma$ is a $C^{l,\alpha}$ submanifold for $l = 2, 3, \dots, n$, then $M \cup \Gamma$ is a $C^{l,\alpha}$ smooth hypersurface with boundary $\Gamma$ near $\Gamma$ (in the Euclidean metric). See also \cite{Tonegawa1996MathZ} and \cite{Lin2012Invent}.

If $\Gamma$ is a $C^{l,\alpha}$ submanifold for some integer $l \geq n+1$, then there exists a geometric obstruction that prevents $M\cup\Gamma$ from being a $C^{l,\alpha}$ hypersurface with boundary $\Gamma$ in a neighborhood of $\Gamma$. As a prototypical example, when $n=3$, the union $M\cup\Gamma$ is a $C^\infty$ hypersurface of dimension $3$ with boundary $\Gamma$ near $\Gamma$ if and only if $\Gamma$ is a Willmore surface. By definition, this requires the mean curvature $H$ and the Gaussian curvature $K$ of $\Gamma$ to satisfy the Willmore equation
\begin{align}
    \Delta H + 2H(H^2 - K)=0.
\end{align}
Han and Jiang \cite{HanJiang2023} analyzed such geometric obstructions to high-order regularity and established several associated regularity results. 
In a subsequent work, Han and Jiang \cite{HanJiang1} proved a convergence theorem under the additional assumption that $\Gamma$ is analytic. For related results, we refer the reader to \cite{FHJ1, HanJiang2024, HanXie, JiangShi, JiangXiao}. An important application of such precise asymptotic behavior lies in the gluing program developed by Fu, Hein and Jiang \cite{FHJ2}. See also \cite{JSZ}.

\medskip
In this paper, we focus on the high codimension setting, namely \(k\geq 2\). Federer \cite{Federer} constructed explicit examples showing that the interior of minimal surfaces in this regime admits codimension-two singularities. A canonical illustration is provided by complex submanifolds in \(\mathbb{C}^{2n}\): these objects are all locally absolutely area-minimizing minimal surfaces, whose singularities consist precisely of real codimension-two branching points.
 By the classical interior regularity theory established by F. Almgren \cite{Almgren}, the support of any hyperbolic-area-minimizing locally rectifiable $n$-current is a relatively closed subset of $\mathbb{H}^\fm$, and forms a real analytic submanifold off a relatively closed singular set whose Hausdorff dimension is bounded above by $n-2$.
Lin \cite{Lin1989CPAM} established the existence theorem for area-minimizing locally rectifiable $n$-currents and area-minimizing flat chains modulo $p$ with $p\geq 2$ in hyperbolic space. In the same work, Lin derived the ``boundary regularity at infinity'' result for area-minimizing flat chains modulo $2$.

The primary aim of this work is to resolve the open problems raised by Lin in \cite{Lin1989CPAM} regarding whether $\operatorname{supp}(T)\cup\Gamma$ is smooth near $\Gamma$ in the Euclidean metric. In particular, Lin \cite{Lin1989CPAM} formulated the following assumption.

\begin{assumption}\label{assmp-normal}
Let $T$ be an area-minimizing locally rectifiable $n$-current in $\mathbb{H}^\fm$. Assume that $T$ is a normal current with respect to the standard Euclidean metric, and satisfies $\partial T = [\Gamma]$.
\end{assumption}

Under the foregoing assumption, we provide a positive solution to Problem 3 posed by Lin in \cite{Lin1989CPAM}.

\begin{theorem}[From locally rectifiable to \(C^{1,\alpha}\) regularity] \label{thm:rectifiable-C-1-alpha}
Let $T$ be an area-minimizing locally rectifiable $n$-current in $\mathbb{H}^\fm$, and let $\Gamma$ be a closed $C^{1,\alpha}$ submanifold of $\mathbb{R}^{\fm-1} \times \{0\}$ of dimension $n-1$, for some $0<\alpha\leq 1$. Assume that $T$ is a normal current with respect to the standard Euclidean metric and satisfies $\partial T = [\Gamma]$. Then there exists a constant $\rho_\Gamma>0$ such that, in the Euclidean metric, the restricted current
\[
T \mathbin{\llcorner} \bigl\{ (x', x^\fm )\in \mathbb{R}^\fm : x^\fm <\rho_\Gamma \bigr\}
\]
admits a representation as an $n$-dimensional $C^{1,\alpha}$ submanifold of $\mathbb{R}^\fm$ up to the boundary $\Gamma$.

Moreover, at every point $P\in \Gamma$, the Euclidean tangent plane of $T$ at $P$ is vertical, meaning that it is orthogonal to the hyperplane $\{x^\fm=0\}$.
\end{theorem}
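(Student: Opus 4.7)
The plan is to proceed in three stages: (i) use hyperbolic half-space barriers together with the normality hypothesis to trap $\operatorname{supp}(T)$ near the vertical $n$-plane over $T_P\Gamma$ at each boundary point $P$ and extract uniform Euclidean mass bounds; (ii) identify the Euclidean tangent cone of $T$ at every $P\in\Gamma$ as the multiplicity-one vertical half-$n$-plane over $T_P\Gamma$; and (iii) invoke a boundary-version Allard-type regularity theorem to upgrade to $C^{1,\alpha}$ regularity up to $\Gamma$. For (i), fix $P\in\Gamma$ and choose Euclidean coordinates so that $P=0$ and $T_P\Gamma=\R^{n-1}\times\{0\}$; using the $C^{1,\alpha}$ regularity of $\Gamma$, enclose a small local piece of $\Gamma$ between two Euclidean hemispheres meeting $\{x^\fm=0\}$ orthogonally at slightly tilted positions. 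These hemispheres are totally geodesic in $\mathbb{H}^\fm$, so a hyperbolic convex-hull argument (comparing $T$ with the foliation of the slab between them) confines $\operatorname{supp}(T)$ to this slab, yielding a quantitative horizontal-displacement estimate and, via the Euclidean normal current hypothesis, a uniform Euclidean mass bound for $T$ in a Euclidean ball centered at $P$.

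For stage (ii), rescale about $P$ by the hyperbolic isometries $(x',x^\fm)\mapsto\lambda(x'-P,x^\fm)$ for $\lambda\to\infty$; the rescaled currents $T_\lambda$ remain hyperbolic area-minimizing with boundary $\Gamma_\lambda$ converging in $C^1$ to $\Gamma_\infty:=T_P\Gamma\times\{0\}$. The Euclidean mass bound from stage (i) lets us extract by weak compactness a subsequential limit $T_\infty$ that is an integer rectifiable normal current with $\partial T_\infty=[\Gamma_\infty]$, and the rescaled slab confinement forces $\operatorname{supp}(T_\infty)$ to lie in the vertical $n$-plane $V_P:=T_P\Gamma\oplus\R\cdot e_\fm$. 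A constancy theorem applied on $V_P\setminus\Gamma_\infty$ then forces $T_\infty$ to be a constant integer multiple of the vertical half-$n$-plane in $V_P\cap\{x^\fm\geq 0\}$; the minimizing property rules out the opposite orientation, and a sharp density bound via the hyperbolic monotonicity formula rules out multiplicities $\geq 2$. Uniqueness of the tangent cone along every sequence $\lambda\to\infty$ follows.

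For stage (iii), off the boundary $\{x^\fm=0\}$ the current $T$ is stationary for the smooth weighted area functional $\int (x^\fm)^{-n}\,dA$, a uniformly elliptic perturbation of Euclidean area on compact subsets of $\{x^\fm>0\}$. Combined with the $C^{1,\alpha}$ regularity of $\Gamma$ and the unique multiplicity-one vertical tangent cone at every boundary point established in stage (ii), an Allard-type boundary regularity theorem, adapted to this weighted setting, yields that $\operatorname{supp}(T)\cup\Gamma$ is a $C^{1,\alpha}$ $n$-submanifold with boundary $\Gamma$ in some slab $\{x^\fm<\rho_\Gamma\}$; the verticality claim is exactly the tangent cone identification from stage (ii). The main obstacle is stage (ii): in codimension $k\geq 2$ there is no maximum principle separating sheets, so one cannot rule out branching points of $T$ accumulating at $\Gamma$ and contributing extra multiplicity to the tangent cone. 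Forcing multiplicity one requires a delicate density computation that ties together the normality hypothesis, the Euclidean mass bound from stage (i), and the hyperbolic monotonicity formula at an asymptotic boundary point; this density equality—between the rescaled hyperbolic mass of $T$ at $P$ and that of a single vertical half-plane—is the technical core of the argument.
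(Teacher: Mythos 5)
Your proposal takes a genuinely different route from the paper and, as written, has a real gap at its technical core. The paper's proof of this theorem is comparatively short: the hard work is Theorem \ref{thm-mass} (Mass Bound), whose proof (via Lemmas \ref{lem-massbound1} and \ref{lem-massbound2}) establishes a \emph{uniform hyperbolic mass bound} $M_{\mathbb{H}^\fm}\bigl(T \mathbin{\llcorner} B_{\mathbb{H}^\fm}(P,1)\bigr) \leq C$ for all $P$ near $\Gamma$. The key idea is a differential inequality on $f(r) = M(T \mathbin{\llcorner} B_{\mathbb{H}^\fm}(P,r))$ obtained by cone-comparison against the nearby totally geodesic disk: after rescaling to height $1$, the support of $T$ is $\delta$-close to a geodesic disk with $\delta \sim (x^\fm(P))^\alpha$, giving $f(1) \leq e^{-1/(c_0\delta)} f(2) + C(n)$. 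The exponential decay $e^{-1/(c_0\delta)}$ beats the polynomial blow-up $(x^\fm(P))^{-n}$ of the crude hyperbolic mass estimate, and the authors explicitly state that this argument \emph{avoids} Anderson's monotonicity formula. Multiplicity one is obtained up front by considering the cycle $T+S$, where $S$ is the vertical cylinder over $\Gamma$ in $\{x^\fm \leq 0\}$, and applying the Constancy Theorem to its orthogonal projection onto $H^+$. With the uniform hyperbolic mass bound in hand, the paper invokes the squashing deformation, Federer 5.3.14, and rescaling to get a small-$C^{1,1}$ graph representation, and then cites the Hardt--Lin boundary argument.

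The gap in your proposal: in stage (i) you claim only a ``uniform Euclidean mass bound for $T$ in a Euclidean ball centered at $P$,'' which is the trivial global bound $M_{\mathbb{R}^\fm}(T)$ from normality and does not control the local mass \emph{ratio} near $\Gamma$. But stage (ii)'s tangent-cone extraction and density-one conclusion both require a uniform local mass ratio bound, which is exactly the nontrivial content of Theorem \ref{thm-mass}. You then appeal to a ``hyperbolic monotonicity formula at an asymptotic boundary point'' and defer the density-equals-one computation as ``the technical core of the argument''---this is precisely the step that is not routine (hyperbolic monotonicity degenerates at the asymptotic boundary, which is why the paper avoids it), so as written your proof is incomplete at its crux. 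Your constancy-theorem argument is also applied only to the blow-up limit $T_\infty$; without the density-one bound, constancy on $V_P \setminus \Gamma_\infty$ pins $T_\infty$ down only to an unknown integer multiple. The paper's device of working with $T+S$ and projecting \emph{before} any blow-up is what makes multiplicity one accessible without boundary monotonicity. If you could supply the missing density computation (or replace it with an argument of the mass-bound type), your route via a boundary Allard theorem would be viable, but currently the stage-(i) output does not feed what stage (ii) actually needs.
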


We also establish a local version of Theorem \ref{thm:rectifiable-C-1-alpha}. For any point $Q \in \Gamma$, let $T_Q\Gamma$ denote the $n$-dimensional tangent plane of $\Gamma$ at $Q$, where $Q\in \mathbb{R}^{\fm-1} \times \{0\}$; we naturally identify $T_Q\Gamma$ as a subset of $\mathbb{R}^{\fm-1} \times \{0\}$. We then introduce the $n$-dimensional vertical half-plane given by
\begin{align}\label{eq-H+-0}
    H^+ = \bigl\{(x', x^\fm)\in \mathbb{R}^\fm: (x',0) \in T_Q\Gamma,\ x^\fm \in \mathbb{R}^+\bigr\}.
\end{align}
In Assumption \ref{assmp-local} below, we normalize coordinates by setting
\begin{align}\label{eq-Q-coor}
    Q= (\mathbf{0},0),
\end{align}
and fix the associated half-plane $H^+$. To be precise, we impose the coordinate condition on $H^+$ that
\begin{align}\label{eq-H-coor}
    x^n = x^{n+1} = \cdots = x^{\fm-1}=0.
\end{align}
For any $R>0$, we define the open set
\begin{align}\label{eq-GR+-0}
    G_R &= \bigl\{(x', x^\fm)\in \mathbb{R}^\fm: |x'|<R,\ 0<x^\fm <R\bigr\},
\end{align}
and the corresponding restricted domain in $H^+$ by
\begin{align}\label{eq-HR+-0}
    B_R^+ &= G_R\cap H^+.
\end{align}

\begin{assumption}\label{assmp-local}
Let $T$ be an area-minimizing locally rectifiable $n$-current in $\mathbb{H}^\fm$ that is asymptotic to $\Gamma$, which is a closed $(n-1)$-dimensional $C^{1}$ submanifold of $\mathbb{R}^{\fm-1} \times \{0\}$ and
\begin{align}\label{local-C1alpha}
    \Gamma \cap \overline{G_R}\in C^{1,\alpha},
\end{align}
for some $0<\alpha\leq 1$.
Let $H^+, G_R, B_R^+$ be defined as in \eqref{eq-H+-0}, \eqref{eq-GR+-0}, and \eqref{eq-HR+-0}, respectively, with the coordinate condition \eqref{eq-H-coor} satisfied on $H^+$. There exists a constant $R>0$ such that the following hold:

(1) For a single $r\in (0,R)$,
\begin{align}\label{eq-Proj-0}
     \operatorname{Proj}\bigl(T \mathbin{\llcorner} G_r\bigr)  = [B^+_r],
 \end{align}
 where $\operatorname{Proj}\bigl(T \mathbin{\llcorner} G_r\bigr)$ denotes the Euclidean orthogonal projection of $T \mathbin{\llcorner} G_r$ onto $H^+$;

(2) For a fixed small constant $c_0 = c_0(\fm, \Gamma) > 0$, and for all $P \in G_R$, the hyperbolic mass satisfies
\begin{align}\label{eq-mass-assump}
     M_{\mathbb{H}^\fm}\bigl(T \mathbin{\llcorner} B_{\mathbb{H}^\fm}(P, 2)\bigr) < e^{c_0 \bigl(x^\fm(P)\bigr)^{-\alpha}}.
\end{align}
\end{assumption}

We note that Assumption \ref{assmp-local} is weaker than Assumption \ref{assmp-normal}. By the Constancy Theorem (4.1.7, \cite{Federer}), there exists a small constant $\rho_\Gamma>0$ depending on $\Gamma$ such that the validity of \eqref{eq-Proj-0} for a single $r\in(0,\rho_\Gamma)$ is equivalent to its validity for all $r\in(0,\rho_\Gamma)$. Equation \eqref{eq-Proj-0} enforces the multiplicity one condition; this serves to exclude singularities, such as branch points, which occur in higher multiplicity scenarios.

As $x^\fm$ decays exponentially with respect to the geodesic distance function on $\mathbb{H}^\fm$ as the latter tends to infinity, the term
\begin{align}
    e^{c_0 \bigl(x^\fm(\cdot)\bigr)^{-\alpha}}
\end{align}
in \eqref{eq-mass-assump} grows \textbf{doubly exponentially} with respect to the geodesic distance function on $\mathbb{H}^\fm$. A key insight in our work is that the doubly exponential mass growth condition implies a uniform upper bound for the local mass near the asymptotic boundary.

\begin{theorem}\label{thm-local-C1a}
There exists a constant $\rho_\Gamma > 0$ such that if Assumption \ref{assmp-local} holds for some $R \in (0, \rho_\Gamma]$, then  for any $r \in (0, R)$, $\operatorname{supp} (T) \cap G_r$ is an $n$-dimensional analytic submanifold of $G_r$ in the Euclidean metric. This submanifold extends continuously to $\Gamma \cap \overline{G_r}$ and is of class $C^{1,\alpha}$ up to this boundary.

Moreover, at every point $P \in \Gamma \cap \overline{G_r}$, the Euclidean tangent plane of $T$ at $P$ is vertical, meaning it is orthogonal to the hyperplane $\{x^\fm = 0\}$.
\end{theorem}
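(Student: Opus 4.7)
The plan is to combine a uniform Euclidean mass estimate, derived from the hyperbolic bound \eqref{eq-mass-assump}, with a tangent-cone analysis at boundary points of $\Gamma$, and then to conclude $C^{1,\alpha}$ regularity up to $\Gamma$ via a boundary regularity theorem of Allard type; interior analyticity will then follow from the multiplicity-one condition \eqref{eq-Proj-0} and Almgren's interior regularity. The architecture parallels that of Theorem \ref{thm:rectifiable-C-1-alpha}, but here the global normality assumption is replaced by the weaker doubly-exponential mass bound, so the main technical contribution lies in extracting Euclidean density control from \eqref{eq-mass-assump}.

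\textbf{Step 1: Uniform Euclidean mass bound.} A hyperbolic unit ball at $P$ with $x^\fm(P)$ small corresponds to a Euclidean ball of radius $\sim x^\fm(P)$, and the conformal factor $(x^\fm)^{-n}$ relates hyperbolic and Euclidean $n$-masses; hence \eqref{eq-mass-assump} translates directly into a (possibly enormous) Euclidean mass bound on small balls near $\Gamma$. The pivotal step is to bootstrap this into the scale-invariant estimate
\[
M_{\text{Eucl}}\bigl(T\mathbin{\llcorner} B_{\text{Eucl}}(P,\rho)\bigr) \leq C\rho^n,
\]
uniform for $P\in G_{R/2}$ and admissible $\rho$. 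Here \eqref{eq-Proj-0} together with hyperbolic minimality is crucial: any Euclidean mass excess over a vertical competitor built from $H^+$ and the $C^{1,\alpha}$ graph $\Gamma$ would, when weighted by the conformal factor, contribute an amount to the hyperbolic mass that outstrips even the doubly-exponential allowance in \eqref{eq-mass-assump}, contradicting minimality against that competitor.

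\textbf{Step 2: Vertical tangent planes and boundary regularity.} At each $P\in\Gamma\cap\overline{G_r}$, consider Euclidean blow-ups $T_\lambda=(\eta_{P,\lambda})_\# T$ as $\lambda\to\infty$. By Step 1 they are locally mass-bounded, and, since Euclidean scaling preserves the hyperbolic metric up to an overall constant factor, each $T_\lambda$ remains hyperbolic-area-minimizing in its rescaled chart. A subsequential limit $C_P$ is supported in $\{x^\fm\geq 0\}$ with boundary $T_P\Gamma\subset\{x^\fm=0\}$, and the projection identity \eqref{eq-Proj-0} passes to the limit to give $\operatorname{Proj}_{H^+}(C_P)=[H^+]$; tangent-cone scale invariance, half-space containment, and multiplicity-one projection together force $C_P = [H^+]$. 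Thus $T$ admits a unique vertical tangent plane with boundary density $1/2$ at every point of $\Gamma\cap\overline{G_r}$, and Allard's boundary regularity theorem for (almost) stationary varifolds with $C^{1,\alpha}$ boundary yields a $C^{1,\alpha}$ graphical representation of $\operatorname{supp}(T)$ over $H^+\cap\overline{G_r}$ up to $\Gamma$.

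\textbf{Step 3: Interior analyticity and main obstacle.} In the interior $\{x^\fm>0\}\cap G_r$, the Constancy Theorem upgrades \eqref{eq-Proj-0} to multiplicity one throughout, so Almgren's interior regularity yields an analytic submanifold, the real-analyticity coming from elliptic regularity for the hyperbolic minimal surface system, whose coefficients are real-analytic. The heart of the argument, and the principal obstacle, is Step 1: the raw hyperbolic bound \eqref{eq-mass-assump} is far too weak to apply any off-the-shelf boundary regularity theorem, and the crux is its bootstrap into a uniform Euclidean density bound via the interplay between hyperbolic minimality and the conformal weight---precisely the ``key insight'' flagged in the introduction.
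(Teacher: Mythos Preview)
Your overall architecture is right, and Steps 2 and 3 are plausible (though the paper takes a slightly different route in Step 2, invoking Lin's squashing argument and Theorem 5.3.14 of \cite{Federer} rather than Allard's boundary theorem). The real issue is Step 1: your account of how the doubly-exponential hyperbolic bound \eqref{eq-mass-assump} is bootstrapped to a uniform local mass bound is too vague, and the mechanism you sketch does not actually work.

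You write that ``any Euclidean mass excess over a vertical competitor \dots\ would, when weighted by the conformal factor, contribute an amount to the hyperbolic mass that outstrips even the doubly-exponential allowance, contradicting minimality against that competitor.'' But a direct competitor comparison only gives $M_{\mathbb{H}^\fm}(T) \le M_{\mathbb{H}^\fm}(\text{competitor})$, and any competitor that reaches the boundary has \emph{infinite} hyperbolic area; if instead you compare inside a fixed hyperbolic ball $B_{\mathbb{H}^\fm}(P,2)$, the natural competitor has mass of order $C(n)$, but this does not by itself explain why the a priori bound \eqref{eq-mass-assump} of size $e^{c_0(x^\fm)^{-\alpha}}$ collapses to a uniform constant. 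The missing ingredient is quantitative: Lemma \ref{lem-rho-Gamma} forces $\operatorname{supp}(T)$ in $B_{\mathbb{H}^\fm}(P,2)$ to lie within Gromov--Hausdorff distance $\delta \sim (x^\fm(P))^\alpha$ of the vertical disk $\tilde D_2 \subset H^+$, and one then needs a comparison lemma (Lemma \ref{lem-massbound2}) that exploits this closeness. That lemma builds a competitor by coning $\partial(T\mathbin{\llcorner} B_r)$ onto the flat disk through segments of length $\le \delta$, which combined with slicing theory yields the differential inequality $f(r) \le c_0\delta f'(r) + C(n)$ for $f(r)=M_{\mathbb{H}^\fm}(T\mathbin{\llcorner} B_{\mathbb{H}^\fm}(P,r))$. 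Integrating gives
\[
M_{\mathbb{H}^\fm}\bigl(T\mathbin{\llcorner} B_{\mathbb{H}^\fm}(P,1)\bigr) \le e^{-1/(c_0\delta)} M_{\mathbb{H}^\fm}\bigl(T\mathbin{\llcorner} B_{\mathbb{H}^\fm}(P,2)\bigr) + C(n),
\]
and now the exponential gain $e^{-1/(c_0\delta)} = e^{-c(x^\fm)^{-\alpha}}$ \emph{exactly} cancels the doubly-exponential input \eqref{eq-mass-assump}, producing the uniform bound $M_{\mathbb{H}^\fm}(T\mathbin{\llcorner} B_{\mathbb{H}^\fm}(P,1)) \le C(\fm,n)$. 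Without this ODE-type argument and the precise matching of the exponent $\alpha$ in $\delta$ with that in \eqref{eq-mass-assump}, your Step 1 remains a heuristic, not a proof.
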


We next introduce the corresponding system of equations and proceed to investigate its regularity theory; in general, however, the analysis of such regularity theory poses substantial challenges owing to the absence of a maximum principle.
Examples demonstrate that solutions of the system of minimal surface equations may fail to exist, be non-unique, or lack stability; for instance, even when the domain is a 4-dimensional ball and the boundary values are analytic, Lipschitz solutions to the minimal surface system generally do not exist. While Lawson and Osserman \cite{LO} established the existence of solutions to the minimal surface system when the domain is 2-dimensional, they also constructed examples showing that such solutions are generally neither unique nor stable.
Within the hyperbolic setting, we have rigorously derived an elegant system of equations governing minimal surfaces near $\Gamma$, which we present as follows.

Locally in a neighborhood of $Q\in \Gamma$, we introduce the Euclidean coordinate chart
\begin{align}\label{eq-y}
y=(y', y^n)=(y^1, \cdots, y^{n-1}, y^n)= (x^1,\cdots, x^{n-1}, x^\fm)
\end{align}
on $H^+$. By virtue of \eqref{eq-Q-coor}, $Q$ corresponds to the origin in the $y$-coordinate system.

\begin{lemma}\label{lem-Main}
Let $T$ be an area-minimizing locally rectifiable current in $\mathbb{H}^\fm$. Suppose that near $Q\in \Gamma$, $T$ can be represented as the graph of a $C^1$ mapping
\begin{align}
\mathbf{u}(y) = \bigl(u_1(y), \cdots, u_{\fm-n}(y)\bigr)
\end{align}
on some domain $\Omega \subseteq H^+$. Then $\mathbf{u}$ is real-analytic and satisfies the system of equations
\begin{align}\label{eq-main-0}
g^{ij}\frac{\partial^2 u_s}{\partial y^i \partial y^j} - \frac{n}{y^{n}} \frac{\partial u_s}{\partial y^{n}} =0 \quad \text{in } \Omega,
\end{align}
for $s=1, \cdots, \fm-n$. Here, the coefficients $g_{ij}$ are defined by
\begin{align}\label{eq-gij}
g_{ij}:=\delta_{ij} +\sum_{l=1}^{\fm -n} \frac{\partial u_l}{\partial y^i}\frac{\partial u_l}{\partial y^j},
\end{align}
and $(g^{ij})$ denotes the inverse matrix of $(g_{ij})$.
\end{lemma}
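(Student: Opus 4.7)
The plan is to derive the Euler-Lagrange system in two stages—first a divergence-form version coming from the hyperbolic area functional, then a simplification to \eqref{eq-main-0} using the Euclidean tangentiality of the mean curvature vector—and then invoke elliptic regularity to get real-analyticity.

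To set up, I parametrize the graph by $F:\Omega\to\mathbb{H}^\fm$ with $F^i=y^i$ for $1\leq i\leq n-1$, $F^{n+s-1}=u_s$ for $1\leq s\leq\fm-n$, and $F^\fm=y^n$. A direct computation of $\sum_I\p_iF^I\p_jF^I$ reproduces the Euclidean first fundamental form in \eqref{eq-gij}, so the induced hyperbolic metric is $g/(y^n)^2$ and the hyperbolic volume form is $(y^n)^{-n}\sqrt{\det g}\,dy$. Since $T$ is hyperbolic area-minimizing and equals the graph on $\Omega$, any compactly supported scalar perturbation of a single $u_s$ is an admissible competitor; the vanishing of the first variation yields the divergence-form identity
\begin{align*}
    \p_j\Bigl(\tfrac{\sqrt{\det g}}{(y^n)^n}\,g^{ij}\p_i u_s\Bigr)=0.
\end{align*}
Using $\p_j(y^n)^{-n}=-n(y^n)^{-(n+1)}\delta_{jn}$, this rewrites as
\begin{align*}
    \Delta_g u_s=\tfrac{n}{y^n}\,g^{in}\p_i u_s,
\end{align*}
where $\Delta_g$ denotes the Laplace–Beltrami operator of $g$.

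To reach the clean form \eqref{eq-main-0}, I exploit the classical fact that $\vec H := \Delta_g F$ is Euclidean-normal to the graph, hence $\sum_I \Delta_g F^I\cdot\p_k F^I=0$ for each $k=1,\dots,n$. Writing $\Delta_g y^k = -g^{ij}\Gamma^k_{ij}=:-\tau^k$ and substituting the explicit components of $F$ yields $\tau^k=\sum_s(\Delta_g u_s)\p_k u_s$ for every $k$. Feeding in the previous PDE and simplifying produces
\begin{align*}
    \tau^k = \tfrac{n}{y^n}\,g^{in}\sum_s\p_i u_s\,\p_k u_s = \tfrac{n}{y^n}\,g^{in}(g_{ik}-\delta_{ik}) = \tfrac{n}{y^n}(\delta^n_k-g^{kn}).
\end{align*}
Inserting this into $\Delta_g u_s = g^{ij}\p_i\p_j u_s - \tau^k\p_k u_s$ and comparing with the earlier form, the $g^{in}\p_i u_s$ contributions cancel exactly, leaving $g^{ij}\p_i\p_j u_s - \tfrac{n}{y^n}\p_n u_s = 0$, which is \eqref{eq-main-0}.

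For the regularity claim, the system is quasilinear with coefficients $g^{ij}(\nabla\mathbf{u})$ real-analytic in $\nabla\mathbf{u}$ and uniformly elliptic on $\Omega$ (the eigenvalues of $g = I + (D\mathbf{u})(D\mathbf{u})^{\top}$ are $\geq 1$), while the zeroth-order coefficient $n/y^n$ is real-analytic on $\Omega\subset H^+$ since $y^n>0$ there. Starting from the hypothesis $\mathbf{u}\in C^1(\Omega)$, Schauder-type estimates applied to the linearization upgrade $\mathbf{u}$ to $C^{1,\alpha}$; a standard bootstrap yields $C^\infty$; Morrey's theorem on analyticity of solutions of quasilinear elliptic systems with real-analytic coefficients then gives real-analyticity on $\Omega$. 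The main difficulty in this plan is the reduction step: the naive Euler-Lagrange equation carries the full gradient coupling $g^{in}\p_i u_s$, and obtaining the diagonal form \eqref{eq-main-0}—on which the subsequent boundary asymptotic analysis is built—relies on the somewhat subtle tangentiality cancellation for the Euclidean mean curvature of the graph.
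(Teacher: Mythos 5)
Your derivation of \eqref{eq-main-0} is correct but follows a genuinely different route from both methods in the paper. The paper derives the system either (a) by directly computing the hyperbolic mean curvature $\langle\Delta_{g_M}\mathbf{F},\nu_s\rangle_{g_{\mathbb{H}^\fm}}=0$ using the explicit Christoffel symbols of hyperbolic space, or (b) by writing the full first variation of the weighted area $\int (y^n)^{-n}\sqrt{g}\,dy$ and testing it against three families of perturbations — tangential $\phi=\zeta\mathbf{e}_\alpha$, the height direction $\phi=\zeta\mathbf{e}_n$, and the graph directions $\eta=\zeta\mathbf{e}_s$ — to produce the three divergence identities \eqref{choose-phi-alpha}, \eqref{choose-phi-n}, \eqref{eq-choose-phi-0} and then combine them as in \eqref{choose-eta}. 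You instead test only in the graph directions to get the single divergence-form equation, and then replace the two ``horizontal'' identities by the classical fact that $\Delta_g\mathbf{F}$ (Laplace–Beltrami of the \emph{Euclidean} induced metric applied to the position vector) is Euclidean-normal to the graph, which gives $\tau^k=\sum_s(\Delta_g u_s)\partial_k u_s$ and, after substituting the divergence-form PDE, exactly reproduces $\tau^k=\tfrac{n}{y^n}(\delta^n_k-g^{kn})$ — precisely the content of \eqref{choose-phi-alpha}, \eqref{choose-phi-n}. Both routes are valid; yours is slicker and uses fewer variations at the cost of invoking the normality of $\Delta_g\mathbf{F}$, which is a standard identity (holding for any isometric immersion, not just minimal ones) but less elementary than reading off first-variation integrands. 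One caution on the regularity part: Schauder estimates require Hölder-continuous coefficients, so the step from $\mathbf{u}\in C^1$ (hence $g^{ij}\in C^0$ only) to $C^{1,\alpha}$ should be justified by $W^{2,p}$/$L^p$ theory (or difference quotients) rather than Schauder; alternatively one can sidestep the PDE bootstrap entirely by invoking Almgren's interior regularity for area-minimizing currents — a point is not in the singular set once $T$ is a multiplicity-one $C^1$ graph there — which is what the paper's framing suggests.
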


The regularity analysis of $\mathbf u$ yields the following theorems, which settle the PDE aspects of the asymptotics for area-minimizing locally rectifiable currents in hyperbolic space.

\begin{theorem}[From $C^{1,\alpha}$ to $C^{n,\alpha}$]\label{thm-Cna}
There exists a constant $\rho_\Gamma>0$ such that if Assumption \ref{assmp-local} holds and
\begin{align}
\Gamma \cap \overline{G_R} \in C^{n,\alpha},
\end{align}
for some $R\in (0, \rho_\Gamma]$ and $\alpha\in (0,1)$, then for any $r\in (0, R)$, $\operatorname{supp}(T) \cap G_r$ is an $n$-dimensional analytic submanifold of $G_r$ in the Euclidean metric. This submanifold extends continuously to $\Gamma \cap \overline{G_r}$ and is of class $C^{n,\alpha}$ up to this boundary.
\end{theorem}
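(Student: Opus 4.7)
The strategy is a boundary-regularity bootstrap for the Fuchsian system of Lemma~\ref{lem-Main}, starting from the $C^{1,\alpha}$ regularity already supplied by Theorem~\ref{thm-local-C1a}. By Theorem~\ref{thm-local-C1a}, for any $Q\in \Gamma\cap\overline{G_r}$ and $\rho$ small enough, $\operatorname{supp}(T)$ is the graph of a map $\mathbf{u}\in C^{1,\alpha}(\overline{B_\rho^+})$ over the vertical half-plane $H^+$ with vertical tangent at $\{y^n=0\}$; by Lemma~\ref{lem-Main} the components $u_s$ satisfy the quasilinear Fuchsian system \eqref{eq-main-0} and are real-analytic in the interior of $G_r$. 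The boundary trace $\mathbf{u}(\cdot,0)$ parametrizes $\Gamma\cap\overline{G_r}$ and is thus of class $C^{n,\alpha}$; only boundary regularity remains to be upgraded.

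The crux is the Fuchsian singularity in the normal direction. Freezing coefficients at $\mathbf{u}=0$, the model operator is
\begin{align*}
L_0 := \Delta_{y'}+\partial_n^2-\frac{n}{y^n}\,\partial_n,
\end{align*}
and a trial solution $v(y^n)=(y^n)^\lambda$ yields the indicial polynomial $\lambda(\lambda-n-1)$, hence indicial roots $\lambda=0$ and $\lambda=n+1$. A formal Taylor expansion in $y^n$ of any solution is recursively determined by the boundary data through orders $k=0,1,\dots,n$, the first possible obstruction appearing at order $n+1$ (which, for $n=3$, specializes to the Willmore operator recalled in the introduction). This indicial gap is what the $C^{n,\alpha}$ conclusion precisely exploits.

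The plan is a finite induction on $k=1,2,\dots,n$: given $\mathbf{u}\in C^{k,\alpha}(\overline{B_\rho^+})$, rewrite \eqref{eq-main-0} as
\begin{align*}
L_0 u_s = \bigl(\delta^{ij}-g^{ij}(\nabla\mathbf{u})\bigr)\p_i\p_j u_s,
\end{align*}
whose right-hand side is quadratic in $\nabla\mathbf{u}$ and therefore of class $C^{k-1,\alpha}$. Combined with boundary data of class $C^{n,\alpha}$, a boundary Schauder estimate for $L_0$ in the hyperbolic-type Hölder spaces developed in the Han--Jiang series \cite{HanJiang2023,HanJiang1} upgrades each $u_s$ to $C^{k+1,\alpha}$ up to the boundary, provided $k+1\leq n$. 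Iterating at most $n-1$ times yields $\mathbf{u}\in C^{n,\alpha}(\overline{B_\rho^+})$. Interior real analyticity is preserved throughout by Lemma~\ref{lem-Main}, which gives the full conclusion.

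The hardest step will be the boundary Schauder estimate for $L_0$ strictly below the second indicial root. The singular coefficient $-n/y^n$ disqualifies classical elliptic Schauder theory, and one must instead work in weighted Hölder spaces adapted to the Fuchsian scaling, exactly in the spirit of what has already been done for the hypersurface case. The additional complication in our high-codimension setting is the coupling between the $u_s$, but I expect this to be manageable because the leading part of \eqref{eq-main-0} is diagonal (the same operator $L_0$ acts on each component), and the quasilinear error $\delta^{ij}-g^{ij}(\nabla\mathbf{u})$ is quadratically small in $\nabla\mathbf{u}$; choosing $\rho_\Gamma$ small enough so that $\nabla\mathbf{u}$ is small on $G_{\rho_\Gamma}$ allows that perturbation to be absorbed by a standard frozen-coefficient argument.
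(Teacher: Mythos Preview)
Your identification of the indicial roots $0$ and $n+1$, and hence the reason $C^{n,\alpha}$ is the natural stopping point, is correct. But the bootstrap as written does not close, and the paper's execution is different.

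First, a concrete error: if $\mathbf{u}\in C^{k,\alpha}$, the right-hand side $(\delta^{ij}-g^{ij}(\nabla\mathbf{u}))\,\partial_i\partial_j u_s$ contains second derivatives of $u_s$ and so lies only in $C^{k-2,\alpha}$, not $C^{k-1,\alpha}$ as you claim; a two-derivative Schauder gain then returns $C^{k,\alpha}$ with no improvement. The standard fix---keep the variable coefficients $g^{ij}(\nabla\mathbf{u})\in C^{k-1,\alpha}$ on the left, or differentiate tangentially---still leaves you needing a boundary Schauder estimate for a uniformly degenerate operator in the ordinary $C^{k,\alpha}(\overline{B^+_\rho})$ scale, and the references you cite do not provide one in that packaged form.

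The paper (which defers to \cite{HanJiang2023} for this theorem) instead separates tangential and normal regularity. Tangential regularity of $\mathbf{v}=\mathbf{u}-\varphi$ is obtained by the scalar maximum principle with the explicit barrier $a|y'-y_0'|^2+b(y^n)^2$, applied componentwise after differentiating the equation tangentially (Lemma~\ref{lem-tan-smooth}); the coupling through $g^{ij}$ is harmless because only scalar comparison is used. Normal regularity is then extracted by rewriting \eqref{eq-main-0} as the ODE $\partial_t^2 u_s-(n/t)\,\partial_t u_s=F_s$ in $t=y^n$ and using the explicit integral kernel \eqref{eq-log-int}, whose characteristic exponents are exactly your $0$ and $n+1$, to raise the order of the $t$-expansion one step at a time (Lemma~\ref{lem-Taylor-u}). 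The formal recursion of Lemma~\ref{lem-Form-Comp} guarantees that no obstruction or logarithm appears before order $n+1$, so the iteration reaches $C^{n,\alpha}$ and halts there. Your frozen-coefficient absorption is not what drives the argument: the barrier step is insensitive to the size of $\nabla\mathbf{u}$, and the ODE step is exact.
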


\begin{theorem}[Boundary Regularity Theorem I]\label{thm-BRT1}
There exists a constant $\rho_\Gamma>0$ such that if Assumption \ref{assmp-local} holds and
\begin{align}
\Gamma \cap \overline{G_R} \in C^{\infty},
\end{align}
for some $R\in (0, \rho_\Gamma]$, then for any $r\in (0, R)$, $\operatorname{supp}(T) \cap G_r$ is the graph of an analytic $(\fm-n)$-valued function $\mathbf{u}$ defined on $B^+_r$ in the Euclidean metric. Moreover, $\mathbf{u}$ can be regarded as a smooth function of $y'$, $y^n$, and $y^n\log(y^n)$ on the closed domain
\begin{align}
\bigl\{(y', y^n, y^n\log(y^n)): |y'|\leq r,\ 0\leq y^n \leq r,\ 0\leq  y^n|\log(y^n)|\leq  r\bigr\}.
\end{align}
If in addition $n$ is even, then $\mathbf{u}$ is of class $C^\infty$ in $y$ on $\overline{B^+_r}$.
\end{theorem}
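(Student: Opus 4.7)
The plan is to establish the expansion in three steps: (i) construct a formal polyhomogeneous expansion of $\mathbf{u}$ at $\{y^n=0\}$; (ii) show via a linearization argument that the actual solution matches this formal expansion to infinite order; (iii) read off the claimed smoothness in $(y',y^n,y^n\log y^n)$. Interior real-analyticity on $B_r^+$ is a standard consequence of the Morrey--Nirenberg analyticity theorem applied to the nonlinear elliptic system \eqref{eq-main-0}, whose coefficients depend real-analytically on $\partial\mathbf{u}$. Theorem \ref{thm-Cna} provides the $C^{n,\alpha}$ starting regularity up to the boundary, and Theorems \ref{thm:rectifiable-C-1-alpha} and \ref{thm-local-C1a} furnish the verticality condition $\partial_n\mathbf{u}(y',0)=0$, which pins down the indicial data at $y^n=0$.

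For the formal expansion, I would seek $\mathbf{u}$ in the form
\[
\mathbf{u}(y',y^n)\sim\sum_{k\geq 0}\mathbf{u}_{2k}(y')(y^n)^{2k}+\sum_{k\geq 0,\,j\geq 1}\mathbf{u}_{2k,j}(y')(y^n)^{2k}(\log y^n)^j
\]
and substitute into \eqref{eq-main-0}. Matching the coefficient of $(y^n)^{2k-2}$ produces a recursion of the form
\[
(2k)(2k-1-n)\mathbf{u}_{2k}+F_k(\mathbf{u}_0,\mathbf{u}_2,\ldots,\mathbf{u}_{2k-2})=0,
\]
where $F_k$ collects horizontal Laplacians and the nonlinear couplings arising from $g^{ij}$. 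A direct parity check on \eqref{eq-main-0}--\eqref{eq-gij} confirms that the ansatz with only even powers of $y^n$ is self-consistent: if $\mathbf{u}$ is even in $y^n$, the mixed entries $g^{in}$ with $i<n$ are odd in $y^n$ while all other $g^{ij}$ and $g^{nn}$ are even, matching the parities of $\partial_i\partial_j\mathbf{u}$; likewise $(1/y^n)\partial_n\mathbf{u}$ is even. Combined with $\mathbf{u}_1=0$, the recursion uniquely determines $\mathbf{u}_{2k}$ from $\mathbf{u}_0,\ldots,\mathbf{u}_{2k-2}$ whenever $2k\neq n+1$. The resonance $2k=n+1$ occurs precisely when $n$ is odd, in which case the obstruction is absorbed by introducing a term $(y^n)^{n+1}\log(y^n)\,\mathbf{w}_0(y')$; continuing the recursion yields a polyhomogeneous series whose log power at order $(y^n)^m$ is bounded by $m/(n+1)$. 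For $n$ even the recursion closes without resonance and the formal series is polynomial in $y^n$.

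To identify $\mathbf{u}$ with the formal series let $\tilde{\mathbf{u}}_N$ denote its truncation at order $(y^n)^N$. By construction, the residual obtained by substituting $\tilde{\mathbf{u}}_N$ into \eqref{eq-main-0} vanishes like $(y^n)^{N-1}$ up to log factors, so $v_N:=\mathbf{u}-\tilde{\mathbf{u}}_N$ satisfies a linear degenerate elliptic system whose principal part is the linearization of \eqref{eq-main-0} at $\tilde{\mathbf{u}}_N$, with source decaying like $(y^n)^{N-1}$. Weighted Schauder estimates for the model operator $\Delta_{y'}+\partial_n^2-(n/y^n)\partial_n$, extending those of \cite{HanJiang2023} and passing unchanged to the vectorial setting by diagonality of the principal symbol, then give $v_N=O((y^n)^{N+1-\epsilon})$ for any $\epsilon\in(0,1)$. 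Letting $N\to\infty$ realizes $\mathbf{u}$ as its formal polyhomogeneous expansion. Every surviving term $(y^n)^m(\log y^n)^j$ has $m\geq j$, so it factors as $(y^n)^{m-j}(y^n\log y^n)^j$---a monomial in the variables $(y^n,y^n\log y^n)$---yielding smoothness in $(y',y^n,y^n\log y^n)$. For $n$ even the absence of log terms gives $\mathbf{u}\in C^\infty$ in $y$.

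The main technical hurdle is the weighted Schauder theory for the model operator with the singular first-order coefficient $n/y^n$: one needs estimates sharp enough to propagate the exponent $N$ across the iteration, yet precise enough not to generate spurious log factors beyond those already captured by the formal expansion. A natural route is to conformally rescale to the hyperbolic metric $(y^n)^{-2}\delta_{ij}$, under which the model operator becomes uniformly elliptic on the half-space and classical Schauder theory applies; one then transfers the estimates back to Euclidean H\"older spaces while carefully tracking the weight $y^n$.
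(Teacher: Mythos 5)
Your plan captures the right overall architecture (formal polyhomogeneous expansion, matching the actual solution to the formal one, then reading off smoothness in $(y',y^n,y^n\log y^n)$), and the parity observation is genuinely useful for seeing why no logarithm appears until order $n+1$. However, there is a real gap in the treatment of the free indicial coefficient at order $n+1$, and the even-power ansatz is wrong when $n$ is even.

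The indicial polynomial of \eqref{eq-main-0} is $i(i-n-1)$, so $i=n+1$ is a resonance for \emph{every} $n$, not only for $n$ odd. At $i=n+1$ the equation imposes no condition on $\mathbf{c}_{n+1,0}$; it is a genuinely free Cauchy-type datum that must be read off from the actual solution (cf.\ Lemma \ref{lem-Form-Comp}, where $\mathbf{c}_{n+1,0}$ is explicitly a free input). Your ansatz restricted to even powers $(y^n)^{2k}$ has two consequences that don't hold. First, for $n$ even the term $\mathbf{c}_{n+1,0}(y^n)^{n+1}$ sits at an odd power and is entirely absent from your expansion; once that term is present, your parity argument breaks, and all subsequent coefficients $\mathbf{c}_{n+2},\mathbf{c}_{n+3},\dots$ of the true expansion are generically nonzero at both even and odd powers. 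Your claim that ``for $n$ even the recursion closes without resonance and the formal series is polynomial in $y^n$'' is incomplete: polynomiality (no logs) is correct, but the expansion is not a series in $(y^n)^{2k}$, and the recursion does not uniquely determine the series from $\varphi$ alone. Second, even when $n$ is odd (so $n+1$ is even and $\mathbf{c}_{n+1,0}$ lives in your ansatz), you treat the resonance only as a source of logarithms, not as a source of a free coefficient; you never explain how $\mathbf{c}_{n+1,0}$ is to be determined for the given solution $\mathbf{u}$.

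This gap propagates to your matching step. You assert that with $\tilde{\mathbf u}_N$ a truncation of the formal series, weighted Schauder estimates for $\Delta_{y'}+\partial_n^2-(n/y^n)\partial_n$ push the error $v_N=\mathbf{u}-\tilde{\mathbf u}_N$ to $O((y^n)^{N+1-\epsilon})$. But the model operator has indicial roots $0$ and $n+1$, so its inverse does not gain an order past the resonant power $n+1$: the homogeneous solution $(y^n)^{n+1}$ lies in the kernel (modulo the boundary conditions), and the error estimate stalls unless the $(y^n)^{n+1}$-component of $\mathbf{u}$ has been matched exactly. The paper resolves this with the explicit ODE integral representation \eqref{eq-log-int} (Lemma \ref{lem-Taylor-u}), which produces $\mathbf{c}_{n+1,0}$ directly from $\mathbf{u}$ as the coefficient of the $t^{\overline m}=t^{n+1}$ term before iterating. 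Any proof along your lines needs an analogous extraction step; without it, the claimed $O((y^n)^{N+1-\epsilon})$ decay is not justified once $N\ge n+1$.

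One smaller point: your bound on the log power at order $(y^n)^m$, namely $j\le m/(n+1)$, is sharper than the bound proved in the paper, $j\le\lfloor(m-1)/n\rfloor$, and is not justified by your sketch. It happens not to matter for the factorization $(y^n)^m(\log y^n)^j=(y^n)^{m-j}(y^n\log y^n)^j$ since both bounds give $j<m$, but as stated it is an unproved and likely incorrect claim.
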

By Theorem \ref{thm-BRT1}, we derive the Taylor expansion of $\mathbf{u}$ with respect to $y'$, $y^n$, and $y^n\log(y^n)$, given by
\begin{align}\label{eq-u-exp-0}
    \mathbf{u} = \varphi(y') +\sum_{i=2}^{n}\mathbf{c}_i(y')(y^n)^i +\sum_{i=n+1}^k\sum_{j=0}^{\lfloor \frac{i-1}{n}\rfloor} \mathbf{c}_{i,j}(y')(y^n)^i(\log(y^n))^j +\mathbf{R}_k,
\end{align}
for any integer $k\geq n+1$, in the sense of Definition \ref{def-exp} below. If $n$ is even, all logarithmic terms in \eqref{eq-u-exp-0} vanish, reducing \eqref{eq-u-exp-0} to a standard Taylor expansion. The coefficients in \eqref{eq-u-exp-0} are determined via formal computations as detailed in the proof of Lemma \ref{lem-Form-Comp}.

In the case where $\Gamma$ has finite regularity, we establish the following boundary regularity theorem. This result implies the validity of the expansion \eqref{eq-u-exp-0} for $n+1 \leq k \leq l$ whenever $\Gamma \cap \overline{G_R} \in C^{l,\alpha}$, even though \eqref{eq-Rk-Tan} naturally fails to hold.

\begin{theorem}[Boundary Regularity Theorem II]\label{thm-BRT2}
There exists a constant $\rho_\Gamma>0$ such that if Assumption \ref{assmp-local} holds and
\begin{align}
\Gamma \cap \overline{G_R} \in C^{l,\alpha},
\end{align}
for some $R\in (0, \rho_\Gamma]$ and some integer $l\geq 1$, then for any $r\in (0, R)$, $\operatorname{supp}(T) \cap G_r$ is the graph of an analytic $(\fm-n)$-valued function $\mathbf{u}$ defined on $B^+_r$ in the Euclidean metric. Moreover, there exist $(\fm-n)$-valued functions
\begin{align}
    \mathbf{w}_0, \mathbf{w}_1, \cdots, \mathbf{w}_m \in C^{l,\epsilon}(\overline{B^+_r}) \quad \text{for all } \epsilon\in (0, \alpha),
\end{align}
such that
\begin{align}
    \mathbf{u} =  \mathbf{w}_0 + \mathbf{w}_1 \log (y^n) + \cdots + \mathbf{w}_m (\log (y^n))^m \quad \text{in } \overline{B^+_r},
\end{align}
and for each $j=1,\cdots, m$,
\begin{align}
    \partial_n^i \mathbf{w}_j(y', 0)=0 \quad \text{for } (y', 0) \in \overline{B^+_r} \text{ and all } 0\leq i\leq jn.
\end{align}
If in addition $n$ is even, or if
\begin{align}
\partial_n^{n+1} \mathbf{w}_1(y', 0)=0 \quad \text{for } (y', 0) \in \overline{B^+_r},
\end{align}
then $\mathbf{u} \in C^{l, \epsilon}(\overline{B^+_r})$.
\end{theorem}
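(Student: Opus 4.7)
My plan is to combine the initial boundary $C^{1,\alpha}$ regularity supplied by Theorem~\ref{thm-local-C1a} with a finite-order formal expansion of the type \eqref{eq-u-exp-0}, and then control the remainder via weighted Schauder estimates for the degenerate elliptic model operator
\[
L_0 := \Delta - \frac{n}{y^n}\,\partial_n
\]
that governs \eqref{eq-main-0} near $y^n = 0$. When $1 \leq l \leq n$, Theorem~\ref{thm-Cna} (or its obvious lower-order variant, proved by the same Schauder iteration but stopped earlier) already gives $\mathbf{u} \in C^{l,\alpha}(\overline{B_r^+})$ directly, with no logarithmic terms and $m = 0$; I therefore concentrate on $l \geq n+1$.

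\emph{Formal expansion.} Substituting \eqref{eq-u-exp-0} into \eqref{eq-main-0} and matching coefficients of $(y^n)^i(\log y^n)^j$ produces a hierarchy of equations whose indicial polynomial is $P(i) = i(i-1) - n\,i = i(i - n - 1)$. For $2 \leq i \leq n$ one has $P(i) \neq 0$, so $\mathbf{c}_i(y')$ is determined uniquely and polynomially from $\varphi = \mathbf{u}|_{y^n = 0}$ and its tangential derivatives of order $\leq i$; since $\varphi \in C^{l,\alpha}$, this places $\mathbf{c}_i \in C^{l-i,\alpha}$. At $i = n+1$ the indicial polynomial vanishes, so the Frobenius procedure introduces a logarithmic coefficient $\mathbf{c}_{n+1,1}(y')$, and this mechanism recurs as $i$ grows, producing precisely the power range $0 \leq j \leq \lfloor (i-1)/n \rfloor$ of logarithms together with the regularity $\mathbf{c}_{i,j} \in C^{l-i,\alpha}$ for $i \leq l$. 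Because each $(y^n)^i$ supplies $i$ powers of the normal weight, each individual monomial $\mathbf{c}_{i,j}(y')(y^n)^i$ lies in $C^{l,\alpha}(\overline{B_r^+})$.

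\emph{Remainder estimate.} Write $\mathbf{u} = \mathbf{V} + \mathbf{R}$, where $\mathbf{V}$ is the formal polylogarithmic sum truncated at order $i = l$. Linearising \eqref{eq-main-0} at $\mathbf{V}$, the remainder $\mathbf{R}$ satisfies a system
\[
a^{ij}(y)\,\partial_{ij}\mathbf{R} - \frac{n}{y^n}\,\partial_n\mathbf{R} = \mathbf{F}(y),
\]
with $a^{ij}$ uniformly elliptic and close to $\delta^{ij}$, and with $\mathbf{F}$ vanishing to order $(y^n)^{l-1}$ up to a bounded power of $\log y^n$. Sharp weighted Schauder estimates for $L_0$, in the spirit of those developed in \cite{HanJiang2023,JiangXiao} for the hypersurface case, then yield a decomposition $\mathbf{R} = \sum_{j=0}^{m}\mathbf{r}_j(\log y^n)^j$ with each $\mathbf{r}_j \in C^{l,\epsilon}(\overline{B_r^+})$ for every $\epsilon < \alpha$ (the $\alpha - \epsilon$ loss reflects the mere $C^\alpha$ tangential regularity of $\mathbf{F}$). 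Combining the $\mathbf{r}_j$ with the corresponding formal log-layers delivers $\mathbf{u} = \sum_{j=0}^{m} \mathbf{w}_j(\log y^n)^j$ with $\mathbf{w}_j \in C^{l,\epsilon}(\overline{B_r^+})$. The vanishing $\partial_n^i\mathbf{w}_j(y',0) = 0$ for $0 \leq i \leq jn$ is then immediate from the construction: the smallest power of $y^n$ accompanying $(\log y^n)^j$ in the formal part is $(y^n)^{jn+1}$, and the same vanishing order is preserved by $\mathbf{r}_j$.

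\emph{Even $n$ and the conditional case.} For even $n$, the same parity argument that eliminates logs in the smooth setting of Theorem~\ref{thm-BRT1} forces $\mathbf{c}_i(y') \equiv 0$ for every odd $i \leq n$ (the nonlinearity in \eqref{eq-main-0} preserves the parity of the index in $y^n$), so the forcing at the odd order $(y^n)^{n-1}$ that defines $\mathbf{c}_{n+1,1}$ vanishes identically; hence $\mathbf{w}_1 \equiv 0$, and a straightforward induction up the logarithmic ladder annihilates every $\mathbf{w}_j$ with $j \geq 1$, giving $\mathbf{u} = \mathbf{w}_0 \in C^{l,\epsilon}$. The hypothesis $\partial_n^{n+1}\mathbf{w}_1(y',0) = 0$ is equivalent to $\mathbf{c}_{n+1,1} \equiv 0$, which is the sole resonant seed at the bottom of the ladder and triggers the same induction even in the absence of the parity input. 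The main obstacle I anticipate is establishing the sharp polylogarithmic weighted Schauder estimate for $L_0$ on $\overline{B_r^+}$ with finite H\"older data: one must show that the singular coefficient $-(n/y^n)\partial_n$ interacts with a $C^\alpha$ forcing precisely so as to generate the claimed polynomial-in-$\log y^n$ structure, with each log-layer carrying the full boundary regularity of the datum up to the unavoidable $\alpha - \epsilon$ loss. This is the technical heart of the argument and should be accessible by refining the machinery already available in \cite{HanJiang2023}.
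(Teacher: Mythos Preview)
Your proposal is correct and matches the paper's approach, which likewise defers the finite-regularity case to the machinery of \cite{HanJiang2023} (Theorem~5.2 there) after setting up the formal expansion via Lemma~\ref{lem-Form-Comp}. The concrete tool behind what you call the ``polylogarithmic weighted Schauder estimate'' is the ODE integral representation \eqref{eq-log-int} of Lemma~\ref{lem-Taylor-u}, applied iteratively one order at a time rather than in a single subtraction of the full formal sum; this is precisely the mechanism by which the $\log y^n$ layers emerge (a $t^{n+1}$ term in the forcing produces $t^{n+1}\log t$ through the integral) and is exactly the ``technical heart'' you anticipate.
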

A natural question arises as to whether the series
\begin{align}\label{eq-u-series}
\varphi +\sum_{i=2}^{n}\mathbf{c}_i(y')(y^n)^i +\sum_{i=n+1}^\infty\sum_{j=0}^{\lfloor \frac{i-1}{n}\rfloor} \mathbf{c}_{i,j}(y')(y^n)^i(\log(y^n))^j,
\end{align}
which corresponds to the expansion of a real solution $\mathbf u$, converges uniformly in $\overline{B^+_r}$. By the work of Kichenassamy \cite{K2} and Kichenassamy and Littman \cite{KL1, KL2}, the answer is affirmative provided that $\varphi$ and $\mathbf{c}_{n+1,0}$ in \eqref{eq-u-exp-0} are real-analytic (see also \cite{K1}). However, given an arbitrary real solution $\mathbf{u}$, it remains unknown whether the corresponding coefficient $\mathbf{c}_{n+1,0}$ is analytic. Han and Jiang \cite{HanJiang1} investigated this problem in the hypersurface setting. For the high-codimension case, we establish an analogous convergence theorem.

\begin{theorem}[Convergence Theorem]\label{thm-Conv-1}
There exists a constant $\rho_\Gamma>0$ such that if Assumption \ref{assmp-local} holds and
\begin{align}
\Gamma \cap \overline{G_R} \in C^\omega,
\end{align}
for some $R\in (0, \rho_\Gamma]$, then for any $r\in (0, R)$, $\operatorname{supp}(T) \cap G_r$ is the graph of an analytic $(\fm-n)$-valued function $\mathbf{u}$ defined on $B^+_r$ in the Euclidean metric. Moreover, $\mathbf{u}$ admits an analytic representation in terms of $y$ and $y^n\log(y^n)$ on the set
\begin{align}
    \bigl\{(y', y^n, y^n\log(y^n)): |y'|\leq r,\ 0\leq y^n \leq r,\ 0\leq y^n|\log(y^n)|\leq r\bigr\}.
\end{align}
In particular, if $n$ is even, then $\mathbf{u}$ is analytic in $y$ on $\overline{B^+_r}$.
\end{theorem}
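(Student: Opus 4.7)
My overall plan is to upgrade the joint smoothness in $(y', y^n, y^n\log(y^n))$ established in Theorem \ref{thm-BRT1} to joint analyticity, by combining the convergence theory of Kichenassamy and Littman \cite{K2, KL1, KL2} with a separate analyticity statement for the free coefficient $\mathbf{c}_{n+1,0}$ in the expansion \eqref{eq-u-exp-0}. By Theorem \ref{thm-BRT1} and the formal computation of Lemma \ref{lem-Form-Comp}, $\mathbf{u}$ admits the expansion \eqref{eq-u-exp-0} to all orders, with $\varphi = \mathbf{u}|_{y^n = 0}$ parametrizing $\Gamma$ and every higher coefficient $\mathbf{c}_i$ (for $2\leq i\leq n$) or $\mathbf{c}_{i,j}$ (for $i\geq n+1$) determined by the system \eqref{eq-main-0} from $\varphi$ together with the single free trace $\mathbf{c}_{n+1,0}$. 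Since $\Gamma\cap\overline{G_R}\in C^\omega$, the trace $\varphi$ is real-analytic, and so the analyticity of all the remaining coefficients is reduced to the analyticity of $\mathbf{c}_{n+1,0}$.

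The main obstacle is precisely this analyticity of $\mathbf{c}_{n+1,0}$: since it is not algebraically determined by $\varphi$, its analyticity has to be extracted from the area-minimizing property of $T$ rather than from the formal expansion. In the hypersurface case ($k=1$) this was carried out by Han--Jiang \cite{HanJiang1}, and I would adapt their argument to the high-codimension system \eqref{eq-main-0}--\eqref{eq-gij}. Concretely, I would combine \emph{(i)} interior analyticity of $\operatorname{supp}(T)$, which follows from Morrey--Nirenberg analytic regularity applied to the analytic elliptic system \eqref{eq-main-0} once $y^n > 0$; \emph{(ii)} the orthogonal boundary condition from Theorem \ref{thm:rectifiable-C-1-alpha}, which supplies the geometric meeting condition underlying a Schwarz-type reflection; and \emph{(iii)} a complexification of the boundary chart so that the now real-analytic $\Gamma$ becomes the fixed locus of an anti-holomorphic involution, across which one reflects the solution. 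The analyticity of $\mathbf{c}_{n+1,0}$ is then read off the extended analytic object via unique continuation. The technical point that genuinely differs from $k=1$ is that the reflection must simultaneously respect the coupling among the $\fm-n$ unknowns $u_s$ through the metric $(g_{ij})$ of \eqref{eq-gij}, so one needs to check that the reflected configuration still solves a complexified version of \eqref{eq-main-0}; verifying this compatibility is where I expect the main work of this step to lie.

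With $\varphi$ and $\mathbf{c}_{n+1,0}$ known to be real-analytic, I would then invoke the convergence theorems of Kichenassamy and Littman \cite{K2, KL1, KL2}: they guarantee that the formal series \eqref{eq-u-series} converges absolutely on a neighborhood of the closed set in $(y', s, z)$-space described in the statement, with $s = y^n$ and $z = y^n\log(y^n)$ treated as independent coordinates, and that its sum is a real-analytic function $\mathbf{U}(y', s, z)$. The uniqueness of the formal expansion, together with Theorem \ref{thm-BRT1}, then identifies $\mathbf{u}(y)$ with $\mathbf{U}(y', y^n, y^n\log(y^n))$ on $\overline{B_r^+}$, giving the asserted analytic representation in terms of $y$ and $y^n\log(y^n)$. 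When $n$ is even, Theorem \ref{thm-BRT1} already forces every logarithmic coefficient $\mathbf{c}_{i,j}$ with $j\geq 1$ to vanish, so the series collapses to a convergent power series in $y$ alone, proving that $\mathbf{u}$ extends real-analytically to $\overline{B_r^+}$ as a function of $y$.
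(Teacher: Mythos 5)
Your overall skeleton matches the paper's: reduce everything to the analyticity of the free coefficient $\mathbf{c}_{n+1,0}$, invoke the Kichenassamy--Littman convergence machinery to produce an analytic function $\mathbf{U}(y',s,z)$ in the separated variables $s=y^n$, $z=y^n\log y^n$, and identify $\mathbf{u}$ with its restriction. You also correctly locate the crux in the non-formal coefficient $\mathbf{c}_{n+1,0}$. However, the method you propose for that crux --- a Schwarz-type reflection via complexification, with $\Gamma$ as the fixed locus of an anti-holomorphic involution --- is not the argument of Han--Jiang \cite{HanJiang1} (which you cite as the model), and it is not what the paper does; more importantly, it would not work here. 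The equation \eqref{eq-main-0} carries the singular coefficient $n/y^n$, so the minimal surface lives only in $\{y^n>0\}$ and any naive reflection across $\{y^n=0\}$ does not land on a solution of an elliptic equation with analytic coefficients across the boundary. Moreover, when $n$ is odd the expansion \eqref{eq-u-exp-0} contains genuine $y^n\log(y^n)$ terms, so $\mathbf{u}$ is not even $C^\infty$ up to $\{y^n=0\}$, let alone a candidate for a real-analytic extension by reflection. The orthogonal-meeting condition is a consequence, not a free Neumann-type boundary condition you can pair with a reflection. What the paper (following \cite{HanJiang1}) actually does is prove Theorem \ref{thm-Tan-Ana}: a scheme of iterated rescaled Schauder estimates and barrier arguments yielding the quantitative tangential bounds \eqref{eq-Tan-Ana-1}--\eqref{eq-Tan-Ana-3} of analytic strength $(DB^{l-1}(l-1)!\,\cdots)$, from which the analyticity of $\mathbf{c}_{n+1,0}$ (extracted as an ODE integral of $\mathbf{v}$) is read off. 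Your step (iii) would need to be replaced by something of this nature.

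A second, smaller gap: you assert that "the uniqueness of the formal expansion, together with Theorem \ref{thm-BRT1}, identifies $\mathbf{u}$ with $\mathbf{U}(y',y^n,y^n\log y^n)$". Agreement of all formal coefficients at $y^n=0$ does not by itself identify two solutions of a degenerate elliptic system; one needs a unique continuation statement at $\{y^n=0\}$ for the Fuchsian system \eqref{eq-Fus}. The paper explicitly invokes the unique continuation result of \cite{HanJiang1} (see also \cite{Mazzeo}) to close this step, and you should too.
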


If $\Gamma$ is only locally smooth (and non-analytic), we state a convergence theorem for $\mathbf{u}$ being analytic in $(y^n)^n\log(y^n)$; see Theorem \ref{thm-conv-2} below.

We conclude the introduction with a brief outline of the paper. In Section \ref{sec-pre}, we clarify certain concepts referenced throughout this work. In Section \ref{sec:rectifiable-C-1-alpha}, we establish a local mass bound estimate for $T$ and prove Theorems \ref{thm:rectifiable-C-1-alpha} and \ref{thm-local-C1a}. In Section \ref{sec-PDE}, we derive the system of minimal surface equations \eqref{eq-main-0} using two distinct methods. In Section \ref{sec-HR}, we prove Theorems \ref{thm-Cna}, \ref{thm-BRT1}, and \ref{thm-BRT2}. In Section \ref{sec-Conv}, we prove Theorem \ref{thm-Conv-1} and present a convergence theorem in the smooth, non-analytic setting.

\section{Preliminary}\label{sec-pre}
In this paper, we say that $T$ is an area-minimizing locally rectifiable $n$-current in $\mathbb{H}^\fm$ if, for every bounded domain $U \subseteq \mathbb{H}^\fm$, the restricted current $T\, \mathbin{\llcorner}\, U$ is absolutely area-minimizing. Precisely, for any rectifiable current $S$ in $\mathbb{H}^\fm$ satisfying $\partial S = \partial (T\, \mathbin{\llcorner}\, U)$, the mass inequality
\begin{align}
    M(T \mathbin{\llcorner} U) \leq M(S)
\end{align}
holds.

Throughout this paper, we make the standing assumption that $\Gamma$ is an oriented closed $C^{1,\alpha}$ submanifold of the hyperplane $\mathbb{R}^{\fm-1} \times \{0\}$ at infinity, for some $0 < \alpha \leq 1$. We say that $T$ is asymptotic to $\Gamma$ if $\partial T = 0$ in the hyperbolic space $\mathbb{H}^\fm$, and the boundary of $\operatorname{supp}(T)$ coincides with $\Gamma$ in the Euclidean metric.

Let $\nu_\Gamma$ denote a unit normal vector field on $\Gamma$ in $\mathbb{R}^{\fm-1}$. For any $Q\in \Gamma$ and $r>0$, we define
\begin{align}
    \delta(Q, r) = \min \bigl\{ \operatorname{dist}\bigl(Q+r\nu_\Gamma(Q), \Gamma\bigr), \operatorname{dist}\bigl(Q-r\nu_\Gamma(Q), \Gamma\bigr) \bigr\}.
\end{align}

For $x\in \mathbb{R}^{\fm-1}\times \{0\}$, we set
\begin{align}
    d(x) = \operatorname{dist}(x, \Gamma).
\end{align}
The minimal surface $\operatorname{supp}(T)$ is contained in the set
\begin{align}\label{eq-W}
    W := \mathbb{R}^{\fm-1}\times (0, \infty)  \setminus \bigcup_{\substack{d(x)>0, \\ x^\fm=0}} B_{\mathbb{R}^\fm}(x,d(x)).
\end{align}

For a multi-valued function $\mathbf{w}(x) = \bigl(\dots, w_s(x), \dots\bigr)$, we adopt the notation
\begin{align}\label{eq-derivative}
    w_{s, i} = \frac{\partial w_s}{\partial x^i}, \quad w_{s, ij} = \frac{\partial^2 w_s}{\partial x^i \partial x^j}.
\end{align}

\begin{lemma}\label{lem-rho-Gamma}
Let $\Gamma$ be of class $C^{1,\alpha}$ for some $0<\alpha\leq 1$ and let $W$ be defined as in \eqref{eq-W}. Then there exists a small constant $\rho_\Gamma>0$ such that for any fixed $Q\in \Gamma$, if
\begin{align}
x=(x', x^\fm) \in W \cap \{x^\fm <\rho_\Gamma \}    
\end{align}
 and
\begin{align}\label{eq-a-r}
(x',0) = Q+r\nu_\Gamma(Q)
\end{align}
for some $r>0$ and some unit normal vector $\nu_\Gamma(Q)$ to $\Gamma$ at $Q$, then
\begin{align}\label{eq-delta-1}
r < C (x^\fm)^{1+\alpha},
\end{align}
where $C = C(\fm,n,\Gamma)$ is a constant independent of $Q$ and $x$.
\end{lemma}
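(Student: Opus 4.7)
The strategy is to convert the condition $x\in W$---namely $|x-y|\geq d(y)$ for every $y\in\mathbb R^{\fm-1}\times\{0\}$ with $d(y)>0$---into a quantitative bound on $r$ by probing with test points $y_s=Q+s\nu_\Gamma(Q)$ along the normal ray to $\Gamma$ at $Q$, and by estimating $d(y_s)$ sharply via the $C^{1,\alpha}$ regularity of $\Gamma$.

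I would fix Euclidean coordinates centered at $Q$ in which $T_Q\Gamma$ is a coordinate subspace of $\mathbb R^{\fm-1}$ and $\nu_\Gamma(Q)$ is a unit basis vector in its orthogonal complement inside $\mathbb R^{\fm-1}$. By $C^{1,\alpha}$ regularity and compactness of $\Gamma$, there exist constants $\rho_0=\rho_0(\Gamma)>0$ and $C_0=C_0(\Gamma)$ such that, uniformly in $Q\in\Gamma$, $\Gamma$ is represented near $Q$ as the graph of a map $\phi$ from $T_Q\Gamma$ into its orthogonal complement, with $\phi(0)=0$, $D\phi(0)=0$, and $|\phi(t)|\leq C_0|t|^{1+\alpha}$ for $|t|\leq\rho_0$. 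For $s\in(0,\rho_0)$, let $Q^\ast=(t^\ast,\phi(t^\ast))$ be a nearest point of $\Gamma$ to $y_s$. Expanding $|y_s-Q^\ast|^2\leq|y_s-Q|^2=s^2$ in these coordinates gives
\[
|t^\ast|^2+|\phi(t^\ast)|^2\leq 2s\,\phi(t^\ast)\cdot\nu_\Gamma(Q)\leq 2s|\phi(t^\ast)|,
\]
which combined with the graph bound yields $|t^\ast|\leq(2sC_0)^{1/(1-\alpha)}$ when $\alpha<1$, and hence
\[
d(y_s)^2\geq s^2-C_2\,s^{2/(1-\alpha)},\qquad C_2=C_2(\Gamma).
\]
Together with the identity $|x-y_s|^2=(r-s)^2+(x^\fm)^2$ coming from the coordinate setup, the condition $x\in W$ then produces
\[
2sr\leq r^2+(x^\fm)^2+C_2\,s^{2/(1-\alpha)}\qquad\text{for every }s\in(0,\rho_0).
\]

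To finish I would optimize in $s$. Since $2sr$ is linear while the error term is superlinear (as $2/(1-\alpha)>1$), the critical scale $s_\ast\sim r^{(1-\alpha)/(1+\alpha)}$ makes the left-hand side of order $r^{2/(1+\alpha)}$, which dominates the residual $r^2$ once $r\ll 1$. A preliminary evaluation at a fixed small $s$ within the valid range already delivers the weak bound $r\lesssim x^\fm$, which, after shrinking $\rho_\Gamma$, guarantees that $r$ (and hence $s_\ast$) lies in the valid range; plugging $s_\ast$ back in then yields $r^{2/(1+\alpha)}\lesssim(x^\fm)^2$, i.e., $r\leq C(x^\fm)^{1+\alpha}$. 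The boundary case $\alpha=1$ must be handled separately because the exponent $2/(1-\alpha)$ degenerates: there the stronger graph estimate $|\phi(t)|\leq C_0|t|^2$ forces $t^\ast=0$ whenever $s<1/(2C_0)$, so $d(y_s)=s$ on that range, and the comparison at $s$ of fixed size directly delivers $r\lesssim(x^\fm)^2$. The main technical hurdle throughout is the sharp lower bound on $d(y_s)$, which is extracted entirely from the $C^{1,\alpha}$ graph estimate; everything downstream is a one-variable optimization.
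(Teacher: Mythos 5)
Your proof is correct and follows the same overall strategy as the paper: probe with $y_s=Q+s\nu_\Gamma(Q)$ along the normal ray, estimate $d(y_s)$ via the $C^{1,\alpha}$ graph bound, and combine with the condition $|x-y_s|\geq d(y_s)$ coming from $x\in W$. You simplify two points, though. First, you obtain the key bound $|t^*|^{1-\alpha}\lesssim s$ on the nearest point $Q^*$ directly from the elementary comparison $|y_s-Q^*|\leq|y_s-Q|=s$, whereas the paper routes through the first-order criticality relation \eqref{eq-crit} together with the derivative bound $|D\varphi(\tilde x)|\leq C|\tilde x|^\alpha$; both yield the same scaling, but your route avoids differentiating and is cleaner. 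Second, you leave $s$ free and optimize $2sr-C_2s^{2/(1-\alpha)}$ over $s$, while the paper sets $s=(x^\fm)^{1-\alpha}$ outright. These are equivalent, since the optimal scale $s_*\sim r^{(1-\alpha)/(1+\alpha)}$ agrees with $(x^\fm)^{1-\alpha}$ precisely at the borderline $r\sim (x^\fm)^{1+\alpha}$, but the optimization viewpoint makes the choice of exponent self-explanatory. One imprecision to tidy: the quadratic inequality $2sr\leq r^2+(x^\fm)^2+C_2s^{2/(1-\alpha)}$ at a fixed small $s$ admits both a small root and a large root in $r$, so evaluating at one $s$ does not by itself deliver $r\lesssim x^\fm$. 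The paper's proof has the same issue (its manipulation leading to \eqref{eq-r-s} implicitly presupposes $r<s$ when selecting a branch of the square root); the intended resolution in the application is that $Q$ is the foot of $(x',0)$ on $\Gamma$, whence taking $y=(x',0)$ in the definition of $W$ directly gives $r=d(x')\leq x^\fm$ and certifies the small-root branch. Your handling of the degenerate case $\alpha=1$ agrees with the paper's.
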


\begin{proof}
Translate $Q$ to the origin and adopt the coordinate conventions from \eqref{eq-H+-0} to \eqref{eq-H-coor}. By a suitable rotation of coordinates, we may set
\begin{align}
\nu_\Gamma(Q) = (x^1, \dots, x^{n-1}, x^n, x^{n+1}, \dots, x^\fm) = (0, \dots, 0, 1, 0, \dots, 0).
\end{align}
For any point $P\in \Gamma$ of the form
\begin{align}\label{eq-P}
(\tilde x, \varphi(\tilde x), 0) := (x^1, \dots, x^{n-1}, \varphi (x^1, \dots, x^{n-1}), 0),
\end{align}
the Euclidean distance between $s\nu_\Gamma(Q)$ and $P$ is given by
\begin{align}
\operatorname{dist}(s\nu_\Gamma(Q), P) = \sqrt{|\tilde x|^2 + (\varphi_1(\tilde x)-s)^2 + \sum_{\beta=2}^{\fm-n}\varphi_\beta^2(\tilde x)},
\end{align}
where $\varphi_i$ denotes the $i$-th component of $\varphi$. At a minimizer of this distance, we have for all $i=1,\dots, n-1$ that
\begin{align}\label{eq-crit}
x_i + (\varphi_1 - s)\varphi_{1, i} + \sum_{\beta=2}^{\fm-n}\varphi_\beta \varphi_{\beta,i} = 0.
\end{align}
Trivially,
\begin{align}
\min_{P\in \Gamma}\operatorname{dist}(s\nu_\Gamma(Q), P) \leq s.
\end{align}
There exists a small constant $\delta = \delta(\Gamma) > 0$ such that for all $s\in (0, \delta]$, the distance between $s\nu_\Gamma(Q)$ and $\Gamma$ is attained by $\operatorname{dist}(s\nu_\Gamma(Q), P)$ for some $P\in \Gamma$ of the form \eqref{eq-P} satisfying the critical condition \eqref{eq-crit}. By the $C^{1,\alpha}$ regularity of $\Gamma$, we have the estimates
\begin{align}\label{eq-s-1}
|\varphi(\tilde x)| \leq C|\tilde x|^{1+\alpha}, \quad |D\varphi(\tilde x)| \leq C|\tilde x|^{\alpha}.
\end{align}
Condition \eqref{eq-crit} then implies
\begin{align}
|\tilde x + (\varphi_1(\tilde x) - s)D\varphi_{1}(\tilde x)| \leq C|\tilde x|^{1+2\alpha}.
\end{align}
Combining this with the bound
\begin{align}
|\varphi_1(\tilde x) D\varphi_{1}(\tilde x)| \leq C|\tilde x|^{1+2\alpha},
\end{align}
we deduce
\begin{align}\label{eq-s-2}
|\tilde x| \leq C s |D\varphi_{1}(\tilde x)|.
\end{align}
From \eqref{eq-s-1} and \eqref{eq-s-2}, it follows that
\begin{align}\label{eq-s-3}
s \geq C^{-1}|\tilde x|^{1-\alpha}.
\end{align}
In the special case $\alpha = 1$, by shrinking $\delta$ if necessary, \eqref{eq-s-1} and \eqref{eq-s-2} force $\tilde x = 0$ and hence $\operatorname{dist}(s\nu_\Gamma(Q), P) = s$.

By the definition of $W$, for each $s>0$ we subtract a ball of radius $\operatorname{dist}(s\nu_\Gamma(Q), \Gamma)$ from the upper half-space. If
\begin{align}
\operatorname{dist}(s\nu_\Gamma(Q), \Gamma) = \operatorname{dist}(s\nu_\Gamma(Q), P) \geq x^\fm,
\end{align}
then algebraic manipulation yields
\begin{align}
r &< s - \sqrt{\operatorname{dist}(s\nu_\Gamma(Q), P)^2 - (x^\fm)^2} \notag \\
&\leq s - \sqrt{|\tilde x|^2 + (\varphi_1(\tilde x)-s)^2 + \sum_{\beta=2}^{\fm-n}\varphi_\beta^2(\tilde x) - (x^\fm)^2} \notag \\
&\leq \frac{2\varphi_1(\tilde x) s + (x^\fm)^2}{s + \sqrt{|\tilde x|^2 + (\varphi_1(\tilde x)-s)^2 + \sum_{\beta=2}^{\fm-n}\varphi_\beta^2(\tilde x) - (x^\fm)^2}} \notag \\
&\leq \frac{2\varphi_1(\tilde x) s + (x^\fm)^2}{s}.
\label{eq-r-s}
\end{align}
For $\alpha = 1$, we have $\tilde x = 0$. Setting $s = \delta$ gives
\begin{align}
r < \delta^{-1}(x^\fm)^2.
\end{align}
For $\alpha \in (0,1)$, we set
\begin{align}\label{eq-s-xm}
    s = (x^\fm)^{1-\alpha}.
\end{align}
 There exists a small constant $\rho_\Gamma>0$ such that $x^\fm \in (0, \rho_\Gamma]$ implies $s \in (0, \delta]$. By \eqref{eq-s-3} and \eqref{eq-s-xm}, we obtain the key bound
\begin{align}\label{eq-s-31}
|\tilde x| \leq C x^\fm.
\end{align}
Combining \eqref{eq-s-1}, \eqref{eq-s-31}, and \eqref{eq-r-s}, we conclude
\begin{align}
r < C (x^\fm)^{1+\alpha}.
\end{align}
Since $\Gamma$ is closed, we may enlarge $C$ if necessary to ensure it is independent of the choice of $Q\in \Gamma$.
\end{proof}

\begin{remark}
    From the proof of Lemma \ref{lem-rho-Gamma}, we deduce that for $\alpha\in (0,1)$,
    \begin{align}\label{eq-r-delta-1}
    \sup_{Q\in \Gamma}\left(1 - r^{-1}\delta(Q, r)\right) \leq C_\Gamma r^{\frac{2\alpha}{1-\alpha}},
    \end{align}
    or equivalently,
    \begin{align}\label{eq-r-delta-2}
    r - C_\Gamma r^{\frac{1+\alpha}{1-\alpha}} \leq \delta(Q, r) \leq r.
    \end{align}
    If $\alpha=1$, then $\delta(Q,r) = r$. See also Section 1 of \cite{Hardt&Lin1987}.
\end{remark}

\section{From Local Rectifiability to $C^{1,\alpha}$ Regularity}\label{sec:rectifiable-C-1-alpha}

For $r>0$, we define the domain
\begin{align}
    D_r = \bigl\{x\in \mathbb{R}^\fm: (x^1)^2+\cdots+(x^n)^2 < r^2,\ x^{n+1} = \cdots = x^\fm=0\bigr\}.
\end{align}

\begin{lemma}[Mass Bound I]\label{lem-massbound1}
Let $T$ be an absolutely area-minimizing rectifiable $n$-current in $\mathbb{R}^\fm$, and suppose that
\begin{align}
   \partial T \mathbin{\llcorner} B_{\mathbb{R}^\fm}(2) = 0,
\end{align}
where \( B_{\mathbb{R}^\fm}(2) \subseteq \mathbb{R}^\fm \) denotes the Euclidean ball of radius $2$ centered at the origin. If in addition the following two conditions hold:

(1) there exists a small constant $\delta>0$ such that the Gromov–Hausdorff distance between
\begin{align}
    \operatorname{supp}\bigl(T \mathbin{\llcorner} B_{\mathbb{R}^\fm}(2)\bigr)
\end{align}
and $D_2$ is bounded above by $\delta$;

(2) the projection identity
\begin{align}
     \operatorname{Proj}(T) \mathbin{\llcorner} D_{2-\delta} = [D_{2-\delta}]
\end{align}
holds, where $\operatorname{Proj}(T)$ denotes the orthogonal projection of $T \mathbin{\llcorner} B_{\mathbb{R}^\fm}(2)$ onto $D_3$;

then there exists a positive constant $c_0 = c_0(\fm, n)$ such that
\begin{align}
    M_{\mathbb{R}^\fm}\bigl(T \mathbin{\llcorner} B_{\mathbb{R}^\fm}(1)\bigr)
    &\leq e^{-\frac{1}{c_0\delta}} M_{\mathbb{R}^\fm}\bigl(T \mathbin{\llcorner} B_{\mathbb{R}^\fm}(2)\bigr) + C(n).
\end{align}
\end{lemma}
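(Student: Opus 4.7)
The plan is to reduce the lemma to a first-order linear differential inequality for
\begin{align*}
    f(r) := M_{\mathbb{R}^\fm}\bigl(T \mathbin{\llcorner} B_{\mathbb{R}^\fm}(r)\bigr), \qquad r\in (1, 2-\delta),
\end{align*}
of the form $f(r) \leq C\delta\, f'(r) + C_1(n)$ a.e., and then to integrate. Setting $h(r) := f(r) - C_1(n)$, this becomes $h'(r) \geq h(r)/(C\delta)$, and Gr\"onwall-type comparison with the model ODE yields
\begin{align*}
    h(1) \leq h(2-\delta)\, e^{-(1-\delta)/(C\delta)} \leq f(2)\, e^{-1/(c_0\delta)}
\end{align*}
for $c_0 = c_0(\fm,n)$ suitably large, which is the stated inequality.

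To produce the pointwise inequality, for a.e.\ $r\in (1,2-\delta)$ the spherical slice $S_r := \langle T, |x|, r\rangle$ is a rectifiable $(n-1)$-cycle in $\partial B_{\mathbb{R}^\fm}(r)$ with $\partial(T\mathbin{\llcorner} B_{\mathbb{R}^\fm}(r)) = S_r$ and $M(S_r) \leq f'(r)$. Hypothesis (1) confines $\operatorname{supp}(S_r)$ to the $\delta$-tubular neighborhood $N_r \subset \partial B_{\mathbb{R}^\fm}(r)$ of the round equator $\partial D_r$. Since $N_r$ deformation-retracts onto $\partial D_r$ via the radial map $\rho_r(x_1,x_2) := (r x_1/|x_1|,\,0)$, we have $H_{n-1}(N_r) \cong \mathbb{Z}\cdot [\partial D_r]$, so $[S_r]$ is an integer multiple of $[\partial D_r]$; a slicing computation applied to the projection identity in hypothesis (2) pins this integer to exactly $1$. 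Taking the straight-line homotopy $(t,x) \mapsto (1-t)x + t\rho_r(x)$ between the inclusion and $\rho_r$, the swept-out current $\Sigma_r$ is a rectifiable collar in $\overline{B_{\mathbb{R}^\fm}(r)}$ with $\partial \Sigma_r = S_r - [\partial D_r]$; since each track has length at most $|x-\rho_r(x)| \leq C\delta$, one obtains
\begin{align*}
    M(\Sigma_r) \leq C\delta\bigl(M(S_r) + M([\partial D_r])\bigr) \leq C\delta\, f'(r) + C(n).
\end{align*}
The competitor $C_r := [D_r] + \Sigma_r$ has $\partial C_r = S_r$ and lies in $\overline{B_{\mathbb{R}^\fm}(r)}$, so absolute area-minimization gives
\begin{align*}
    f(r) \leq M(C_r) \leq \omega_n r^n + C\delta\, f'(r) + C(n),
\end{align*}
which is the differential inequality above after absorbing $\omega_n r^n + C(n) \leq C_1(n)$.

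The principal obstacle will be making the collar construction rigorous at the level of integer rectifiable currents while keeping the mass bound linear in $\delta f'(r)$. The plain isoperimetric inequality in $N_r$ would only deliver $M(\Sigma_r) \lesssim M(S_r - [\partial D_r])^{n/(n-1)}$, useless when $f'(r)$ is large; the linear bound hinges both on localising the homotopy inside the thin tube $N_r$ and on invoking hypothesis (2) to identify $(\rho_r)_*S_r$ with $[\partial D_r]$ exactly, so that cancellation in $S_r - [\partial D_r]$ is maximal. One also needs to verify that $\operatorname{supp}(S_r)$ stays uniformly away from the axis $\{x_1 = 0\}$ at which $\rho_r$ degenerates; this holds for $\delta$ sufficiently small because (1) forces $|x_1| \geq r - \delta \geq 1 - \delta > 0$ on $\operatorname{supp}(S_r)$.
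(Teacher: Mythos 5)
Your proposal is correct and takes essentially the same approach as the paper: construct a competitor current consisting of the flat disk plus a collar swept from the slice $S_r = \partial(T\mathbin{\llcorner}B_r)$ to its boundary circle, use the $\delta$-Gromov--Hausdorff closeness to bound the collar's mass by $C\delta\,M(S_r)\leq C\delta\,f'(r)$, invoke area-minimization and slicing theory to get $f(r)\leq C\delta\,f'(r)+C(n)$, and integrate. The one cosmetic difference is that the paper builds the collar by joining each piece $S_i$ of the slice to $D_{r-\delta}$ by shortest line segments, whereas you use the straight-line homotopy to a radial retraction and spell out the homology/multiplicity-one step via hypothesis (2) rather than leaving it implicit; both yield the same linear-in-$\delta$ mass estimate and hence the same differential inequality.
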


\begin{proof}
For $r\in [1, 2]$, we set
\begin{align}
    f(r) = M(T_r) :=  M\bigl(T \mathbin{\llcorner} B_{\mathbb{R}^\fm}(r)\bigr).
\end{align}

    For almost every $r\in [1,2]$, the boundary current $\partial\bigl(T \mathbin{\llcorner} B_{\mathbb{R}^\fm}(r)\bigr)$ is rectifiable. For each fixed such $r$, the support of $\partial\bigl(T \mathbin{\llcorner} B_{\mathbb{R}^\fm}(r)\bigr)$ is contained in the union of a countable collection of $(n-1)$-dimensional Lipschitz submanifolds $\{S_i\}_{i\geq 1}$ and a set $S_0$ of $\mathcal{H}^{n-1}$-measure zero. In addition, by partitioning the support into sufficiently fine pieces $\{S_i\}$ (modulo a set of $\mathcal{H}^{n-1}$-measure zero), we ensure the multiplicity is constant on each $S_i$, and also
\begin{align}
M\bigl(\partial\bigl(T \mathbin{\llcorner} B_{\mathbb{R}^\fm}(r)\bigr)\bigr) = \sum_i \theta_i \cdot \operatorname{Area}(S_i),
\end{align}
where $\theta_i \in \mathbb{N}$ is the constant multiplicity of the current restricted to $S_i$.
    For each $S_i$, we connect points on $S_i$ to the flat disk $D_{r-\delta}$ via shortest line segments. This construction defines a rectifiable $n$-current $R$ satisfying
    \begin{align}
        R \mathbin{\llcorner} D_{r-\delta} = [D_{r-\delta}], \,\,
        \partial R = \partial\bigl(T_r).
    \end{align}
    Moreover, there exists a positive constant $c_0>0$ such that
    \begin{align}
        M(T_r) \leq M(R) \leq c_0 \delta M(\partial T_r) + C(n),
    \end{align}
    where $C(n)$ denotes the mass of $D_2$. Following the argument in the proof of Lemma 1.3 in \cite{Lin1989CPAM}, slicing theory—with $u(r)$ denoting the distance function to the center of $D_2$—yields
\begin{align}\label{eq-f1}
    f'(r) \geq M\langle T_r, u, r^+\rangle = M(\partial T_r).
\end{align}
This implies the differential inequality
\begin{align}
    f(r) \leq c_0\delta f'(r) + C(n),
\end{align}
from which we deduce that for all $r\in [1,2]$,
\begin{align}
    f(r) \leq e^{\frac{r-2}{c_0\delta}}f(2) +C(n).
\end{align}
This completes the proof of the lemma.
\end{proof}

The next is a hyperbolic version of the above lemma.

\begin{lemma}[Mass bound II]\label{lem-massbound2}
Let $H^+$ be defined as in \eqref{eq-H+-0} and let $\{\tilde D_r\}$ be a family of $n$-dimensional hyperbolic disks of radius $r>0$ in $H^+$, sharing a common center $P$. Let $T$ be an area-minimizing locally rectifiable $n$-current in $\mathbb{H}^\fm$, and suppose that
\begin{align}
   x^\fm(P)=1, \quad \partial T \mathbin{\llcorner} B_{\mathbb{H}^\fm}(P, 2) =0. 
\end{align}
If in addition the following hold:

(1) there exists a small constant $\delta>0$ such that the Gromov–Hausdorff distance between
\begin{align}
    \operatorname{supp}\bigl(T \mathbin{\llcorner} B_{\mathbb{H}^\fm}(P, 2)\bigr)
\end{align}
and $\tilde D_2$ is bounded by $\delta$;

(2)
\begin{align}\label{eq-Proj1}
     \operatorname{Proj}(T) \mathbin{\llcorner} \tilde{D}_{2-\delta} = [\tilde{D}_{2-\delta}],
\end{align}
where $\operatorname{Proj}(T)$ denotes the Euclidean orthogonal projection of $T \mathbin{\llcorner} B_{\mathbb{H}^\fm}(P, 2)$ onto $H^+$;

then there exists a positive constant $c_0 = c_0(\fm, n)$ such that
\begin{align}
    M_{\mathbb{H}^\fm}\bigl(T \mathbin{\llcorner} B_{\mathbb{H}^\fm}(P, 1)\bigr)
    &\leq e^{-\frac{1}{c_0\delta}} M_{\mathbb{H}^\fm}\bigl(T \mathbin{\llcorner} B_{\mathbb{H}^\fm}(P, 2)\bigr) + C(n).
\end{align}
\end{lemma}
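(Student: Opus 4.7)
The approach mirrors the proof of Lemma \ref{lem-massbound1}, with the hyperbolic metric replacing the Euclidean one. The key preliminary observation is that since $x^\fm(P)=1$ and the hyperbolic ball $B_{\mathbb{H}^\fm}(P,2)$ is confined to a region where $x^\fm$ stays between two positive constants depending only on $\fm$, the hyperbolic and Euclidean metrics are quasi-isometric on this region with ratio depending only on $\fm$. Consequently the hyperbolic mass of an $n$-current, the hyperbolic area of the flat disk $\tilde D_r$, and the hyperbolic length of short geodesic segments inside $B_{\mathbb{H}^\fm}(P,2)$ are all comparable to their Euclidean counterparts up to multiplicative constants $C(\fm)$.

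Set $f(r)=M_{\mathbb{H}^\fm}\bigl(T \mathbin{\llcorner} B_{\mathbb{H}^\fm}(P,r)\bigr)$ for $r\in[1,2]$. For almost every such $r$ the slice $\partial\bigl(T\mathbin{\llcorner} B_{\mathbb{H}^\fm}(P,r)\bigr)$ is rectifiable; as in the Euclidean case, I would decompose its support, modulo an $\mathcal{H}^{n-1}$-null set, into countably many $(n-1)$-dimensional Lipschitz submanifolds $\{S_i\}$ on each of which the slice has constant integer multiplicity $\theta_i$. I then construct a comparison current $R_r$ by joining each $S_i$ to $\tilde D_{r-\delta}$ via shortest (equivalently hyperbolic or Euclidean) geodesic segments, and adding the oriented disk $[\tilde D_{r-\delta}]$; the projection identity \eqref{eq-Proj1} ensures that after this addition the boundary of $R_r$ agrees with $\partial\bigl(T\mathbin{\llcorner} B_{\mathbb{H}^\fm}(P,r)\bigr)$, since the flat-disk contribution cancels against the projection of the slice onto $\tilde D_{r-\delta}$. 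Each segment has hyperbolic length at most $C(\fm)\delta$, so a direct area computation yields
\begin{align*}
M_{\mathbb{H}^\fm}(R_r)\leq c_0\,\delta\, M_{\mathbb{H}^\fm}\bigl(\partial\bigl(T\mathbin{\llcorner} B_{\mathbb{H}^\fm}(P,r)\bigr)\bigr)+C(n),
\end{align*}
where $C(n)$ bounds the hyperbolic area of $\tilde D_2$.

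Invoking the absolute area-minimizing property of $T$ (which applies because $T\mathbin{\llcorner} B_{\mathbb{H}^\fm}(P,2)$ is rectifiable and $R_r-T\mathbin{\llcorner} B_{\mathbb{H}^\fm}(P,r)$ has zero boundary) yields $f(r)\leq M_{\mathbb{H}^\fm}(R_r)$. Slicing theory applied to the hyperbolic distance function $u(\cdot)=d_{\mathbb{H}^\fm}(P,\cdot)$, which is $1$-Lipschitz in the hyperbolic metric, produces $f'(r)\geq M_{\mathbb{H}^\fm}\bigl(\partial\bigl(T\mathbin{\llcorner} B_{\mathbb{H}^\fm}(P,r)\bigr)\bigr)$ for a.e.\ $r\in[1,2]$, in exact analogy with \eqref{eq-f1}. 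Combining these two bounds gives the differential inequality
\begin{align*}
f(r)\leq c_0\,\delta\, f'(r)+C(n),\qquad r\in[1,2],
\end{align*}
and a standard Gronwall-type integration over $[1,2]$ delivers $f(1)\leq e^{-1/(c_0\delta)}f(2)+C(n)$.

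The main obstacle will be the careful construction of the comparison current $R_r$: one must verify that the oriented cone over $\{S_i\}$ together with $[\tilde D_{r-\delta}]$ assembles to a genuine rectifiable $n$-current with the correct boundary, using \eqref{eq-Proj1} to identify the projection of $\partial\bigl(T\mathbin{\llcorner} B_{\mathbb{H}^\fm}(P,r)\bigr)$ onto $H^+$ with the boundary of $[\tilde D_{r-\delta}]$ (minus a negligible piece lying outside $\tilde D_{r-\delta}$). The role of hyperbolic geometry itself is largely cosmetic at this unit scale, but one must carefully track constants to ensure $c_0$ depends only on $\fm$ and $n$, and not on $T$, $\delta$, or the particular location of $P$ within the half-space.
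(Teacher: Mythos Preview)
Your proposal is correct and follows essentially the same approach as the paper: both arguments transplant the Euclidean proof of Lemma~\ref{lem-massbound1} to the hyperbolic setting by observing that, since $x^\fm(P)=1$, the hyperbolic area integrand $\Phi(x,\xi)=|x^\fm|^{-n}|\xi|$ is uniformly elliptic on $B_{\mathbb{H}^\fm}(P,2)$, then construct the same cone-plus-disk comparison current $R$, derive the same differential inequality via slicing, and integrate. The paper's proof is terser—it simply points back to Lemma~\ref{lem-massbound1} after recording the ellipticity bound $e^{-2n}$—but your expanded version fills in precisely the details the paper elides.
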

\begin{proof}
According to the proof of Lemma 1.3 in Lin \cite{Lin1989CPAM}, a hyperbolic area may be described by the parametric integrand
    \begin{align}
        \Phi(x,\xi) = |x^\fm|^{-n}|\xi|.
    \end{align}
    In $B_{\mathbb{H}^\fm}(P, 2)$, this integrand is (with respect to the Euclidean metric) elliptic with an ellipticity bound $e^{-2n}$.

Similarly as in the proof of Lemma \ref{lem-massbound1}, we construct a rectifiable $n$-current $R$, whose Euclidean and hyperbolic masses are bounded by $c_0\delta M(\partial (T \,\Large\llcorner\normalsize \, B_{\mathbb{R}^\fm}(P, r))) + C(n)$.
The rest of the proof is the same as that of Lemma \ref{lem-massbound1}.
\end{proof}

\begin{theorem}\label{thm-mass}
Let $\Gamma$ be a $C^{1,\alpha}$ submanifold of the hyperplane at infinity, let $\rho_\Gamma$ be defined as in Section \ref{sec-pre} and let $P$ be a point in $G_r$ with
\begin{align}\label{eq-xm-P}
    r < \tfrac{1}{2}\rho_\Gamma.
\end{align}Assume that $T$ is an area-minimizing locally rectifiable $n$-current in $\mathbb{H}^\fm$, and $T$ is a normal current in the standard Euclidean metric with $\partial T = [\Gamma]$. Then 
\begin{align}
    M_{\mathbb{H}^\fm}\bigl(T \mathbin{\llcorner} B_{\mathbb{H}^\fm}(P, 1)\bigr) < C\bigl(M_{\mathbb{R}^\fm}(T) +1\bigr),
\end{align}
where the constant $C = C(\fm,\Gamma)$ is independent of the choice of  $P$.
\end{theorem}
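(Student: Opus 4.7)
The plan is to reduce the proof to a normalized location by a hyperbolic dilation, and then to establish the bound through a slicing-plus-cone-comparison argument in the spirit of Lemmas \ref{lem-massbound1}--\ref{lem-massbound2}, with the flat-disk competitor replaced by a reference piece drawn from the vertical tangent half-plane $H^+$ to $\Gamma$ at a nearby boundary point.

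Concretely, set $\lambda := x^\fm(P) < \rho_\Gamma/2$ and apply the hyperbolic dilation $\Phi_\lambda(x) = x/\lambda$, a hyperbolic isometry sending $P$ to $\tilde P$ with $x^\fm(\tilde P) = 1$. Writing $\tilde T = (\Phi_\lambda)_\# T$ and $\tilde \Gamma = \Phi_\lambda(\Gamma) \subset \{x^\fm = 0\}$, hyperbolic isometry invariance gives
\begin{align*}
M_{\mathbb{H}^\fm}\bigl(T \mathbin{\llcorner} B_{\mathbb{H}^\fm}(P, 1)\bigr) = M_{\mathbb{H}^\fm}\bigl(\tilde T \mathbin{\llcorner} B_{\mathbb{H}^\fm}(\tilde P, 1)\bigr),
\end{align*}
while the Euclidean masses rescale as $M_{\mathbb{R}^\fm}(\tilde T) = \lambda^{-n} M_{\mathbb{R}^\fm}(T)$. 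The ball $B_{\mathbb{H}^\fm}(\tilde P, 2)$ is contained in the slab $\{x^\fm \in [e^{-2}, e^2]\}$, on which the hyperbolic integrand $(x^\fm)^{-n}|\xi|$ is bi-Lipschitz equivalent to the Euclidean integrand with ellipticity bounded by $e^{\pm 2n}$, and $\partial \tilde T$ is disjoint from the ball.

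Define $g(r) := M_{\mathbb{H}^\fm}(\tilde T \mathbin{\llcorner} B_{\mathbb{H}^\fm}(\tilde P, r))$ for $r \in [1,2]$. Radial slicing, as in the proofs of Lemmas \ref{lem-massbound1}--\ref{lem-massbound2}, gives $g'(r) \geq M(\partial(\tilde T \mathbin{\llcorner} B_{\mathbb{H}^\fm}(\tilde P, r)))$ for a.e. such $r$. For each slice, I would construct a competitor $S_r$ by coning the slice hyperbolically back to $\tilde P$ and patching against a universal reference piece of the vertical tangent half-plane $H^+$ anchored at the point of $\tilde \Gamma$ closest to $\tilde P$. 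The thinness estimate of Lemma \ref{lem-rho-Gamma} (width $O(\lambda^\alpha)$ of the admissible region $W$ around $H^+$) would ensure that $\operatorname{supp}(\tilde T)$ lies in a graph-like neighborhood of $H^+$, so the patching can be carried out with Lipschitz control, producing
\begin{align*}
M_{\mathbb{H}^\fm}(S_r) \leq c_0\, M\bigl(\partial(\tilde T \mathbin{\llcorner} B_{\mathbb{H}^\fm}(\tilde P, r))\bigr) + K,
\end{align*}
with $c_0 = c_0(\fm,n)$ a coning constant and $K = K(\fm,\Gamma)$ the hyperbolic mass of the reference piece, independent of $P$. Combining with the area-minimizing inequality $g(r) \leq M_{\mathbb{H}^\fm}(S_r)$ yields the ODE $g(r) \leq c_0\, g'(r) + K$, which integrates on $[1,2]$ to $g(1) \leq e^{-1/c_0} g(2) + K$. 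To close the estimate, I would bound $g(2)$ by $e^{2n} M_{\mathbb{R}^\fm}(\tilde T \mathbin{\llcorner} B_{\mathbb{H}^\fm}(\tilde P, 2)) \leq e^{2n}\lambda^{-n} M_{\mathbb{R}^\fm}(T)$ via the metric equivalence, and absorb the $\lambda^{-n}$ factor either into the decay rate $e^{-1/c_0}$ directly or through a dyadic iteration of the same ODE on successive annuli, yielding the target bound $M_{\mathbb{H}^\fm}(T \mathbin{\llcorner} B_{\mathbb{H}^\fm}(P,1)) \leq C(\fm,\Gamma)(M_{\mathbb{R}^\fm}(T) + 1)$.

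The main obstacle I anticipate is producing the universal constant $K = K(\fm,\Gamma)$ in the competitor bound without invoking the Gromov--Hausdorff-closeness and projection hypotheses of Lemmas \ref{lem-massbound1}--\ref{lem-massbound2}. This is essentially a patching problem: the reference half-plane $H^+$ lives at Euclidean distance $\sim 1$ from $\tilde P$, so joining it with the slice of $\tilde T$ requires a short Lipschitz interpolation whose area must be independent of $P$. The thinness of $W$ from Lemma \ref{lem-rho-Gamma} together with the uniform $C^{1,\alpha}$ character of $\Gamma$ should give exactly the geometric control needed, but checking that the resulting competitor is genuinely admissible (i.e., an integral current with the correct boundary) requires a delicate regularization argument.
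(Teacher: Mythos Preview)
Your outline has the right scaffolding (rescale by a hyperbolic dilation, slice, build a competitor, derive an ODE for $g$), but the competitor you describe produces the wrong differential inequality, and this is not a technicality: it is precisely the place where the argument either succeeds or fails.

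You cone the slice to $\tilde P$ and then patch to a fixed reference piece of $H^+$. Coning a slice of hyperbolic radius $r\in[1,2]$ to its center costs mass $\sim r\cdot M(\partial(\tilde T\mathbin{\llcorner}B_{\mathbb H^\fm}(\tilde P,r)))$, so your $c_0$ is a universal constant of order one. The ODE $g(r)\le c_0\,g'(r)+K$ then integrates to $g(1)\le e^{-1/c_0}g(2)+K$ with a \emph{fixed} decay factor $e^{-1/c_0}<1$, independent of $\lambda=x^\fm(P)$. Since $g(2)\le e^{2n}\lambda^{-n}M_{\mathbb R^\fm}(T)$, you are left with $e^{-1/c_0}\lambda^{-n}$, which blows up as $\lambda\to 0$. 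Neither ``absorbing into the decay rate'' nor a dyadic iteration on $[1,2]$ helps: the number of iterates on a bounded interval is bounded independently of $\lambda$, so you only gain a fixed power of $e^{-1/c_0}$.

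The paper's proof uses the thinness you cite from Lemma~\ref{lem-rho-Gamma} in a sharper way. After rescaling, the support of $\tilde T$ in $B_{\mathbb H^\fm}(\tilde P,2)$ lies within Gromov--Hausdorff distance $\delta\sim\lambda^\alpha$ of the disk $\tilde D_2\subset H^+$. The competitor is obtained by joining each piece of the slice to the disk by \emph{shortest segments of length $\lesssim\delta$}, not by coning to $\tilde P$; this gives $M(S_r)\le c_0\delta\,M(\partial(\tilde T\mathbin{\llcorner}B_{\mathbb H^\fm}(\tilde P,r)))+C(n)$, hence $g(r)\le c_0\delta\,g'(r)+C(n)$ and $g(1)\le e^{-1/(c_0\delta)}g(2)+C(n)$. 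With $\delta\sim\lambda^\alpha$ the prefactor $e^{-c\lambda^{-\alpha}}$ annihilates the $\lambda^{-n}$ polynomial growth of $g(2)$, which is exactly the ``doubly exponential beats polynomial'' step highlighted in the introduction. In short, the $\delta$ must enter the coning constant, not just the patching geometry.

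A second, smaller omission: to invoke Lemma~\ref{lem-massbound2} you also need the multiplicity-one projection identity \eqref{eq-Proj1}. The paper obtains this from the normality hypothesis by attaching to $T$ the vertical cylinder $S$ over $\Gamma$ in $\{x^\fm\le 0\}$, so that $\partial(T+S)=0$, and then applying the Constancy Theorem to the projection onto $H^+$. Your outline treats thinness but not multiplicity, and without the latter the competitor comparison can fail at branch points.
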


\begin{proof}
    Let $\rho_\Gamma$ be defined as in Section \ref{sec-pre}. We prove the theorem without applying Anderson's monotonicity theorem in \cite{Anderson1982}.

First, we establish \eqref{eq-Proj1}. Consider the current $S$ represented by the set
\begin{align}
    \bigl\{(x', x^\fm) \in \mathbb{R}^\fm: (x',0)\in \Gamma, \, x^\fm\leq 0\bigr\},
\end{align}
satisfying $\partial S = - [\Gamma]$. By our assumptions, we have
\begin{align}
    \partial (T+S) =0
\end{align}
in the standard Euclidean metric, and for every compact domain $K\subseteq \mathbb{R}^\fm$, the restricted current
\begin{align}
    (T+ S) \mathbin{\llcorner} K
\end{align}
is a normal current in $\mathbb{R}^\fm$. We fix a constant
\begin{align}
    \epsilon= \tfrac{1}{2}\rho_\Gamma.
\end{align}
For a fixed point $Q \in \Gamma$ and the associated half-space $H^+$, we define the restricted current
\begin{align}
    T_\epsilon:=(T+S) \mathbin{\llcorner} B_{\mathbb{R}^\fm}(Q, \epsilon).
\end{align}
By the Constancy Theorem (4.1.7 in \cite{Federer}) and the structural property of the set $W$ in \eqref{eq-W}, the Euclidean orthogonal projection of $T_\epsilon$ onto $H^+$ satisfies
\begin{align}
  \operatorname{Proj}(T_\epsilon)\mathbin{\llcorner} \bigl(B_{\mathbb{R}^\fm}(Q, \epsilon/2) \cap H^+\bigr)=
    k\bigl[B_{\mathbb{R}^\fm}(Q, \epsilon/2) \cap H^+\bigr],
\end{align}
for some integer $k\in \mathbb{N}$. It is immediate that $k=1$ by the definition of $S$, which yields \eqref{eq-Proj1}.
    
    Secondly, we derive a mass bound for $T \mathbin{\llcorner} B_{\mathbb{H}^\fm}(P, 2)$. It is immediate that
    \begin{align}
        M_{\mathbb{R}^\fm}\bigl(T \mathbin{\llcorner} B_{\mathbb{H}^\fm}(P, 2)\bigr) \leq M_{\mathbb{R}^\fm}(T).
    \end{align}
    Within $B_{\mathbb{H}^\fm}(P, 2)$, we have $x^\fm \geq e^{-2}\cdot x^\fm(P)$, which implies the estimate
    \begin{align}\label{eq-mass1}
        M_{\mathbb{H}^\fm}\bigl(T \mathbin{\llcorner} B_{\mathbb{H}^\fm}(P, 2)\bigr) \leq e^{2n}\cdot \bigl(x^\fm(P)\bigr)^{-n} M_{\mathbb{R}^\fm}(T).
    \end{align}

    Lastly, we set
    \begin{align}
        \tilde \delta = \bigl(e^2 x^\fm(P)\bigr)^{1+\alpha}.
    \end{align}
    Then by \eqref{eq-delta-1}, the Gromov–Hausdorff distance between the support of $T \mathbin{\llcorner} B_{\mathbb{H}^\fm}(P, 2)$ and $\tilde D_2$ is bounded by $\tilde \delta$. We define an isometric map $\Phi$ by
    \begin{align}\label{eq-phi}
        \Phi (x', x^\fm) = \bigl(x^\fm(P)\bigr)^{-1}\bigl(x' - x'(P), x^\fm\bigr),
    \end{align}
    which maps $B_{\mathbb{H}^\fm}(P,2)$ bijectively onto $B_{\mathbb{H}^\fm}(\Phi(P),2)$ with $x^\fm(\Phi(P))=1$. We deduce that the Gromov–Hausdorff distance between the support of $(\Phi_\# T) \mathbin{\llcorner} B_{\mathbb{H}^\fm}(\Phi(P), 2)$ and $\Phi_\# \tilde D_2$ is bounded by
    \begin{align}\label{eq-delta-2}
        \delta = e^{2+2\alpha}\bigl(x^\fm(P)\bigr)^{\alpha}.
    \end{align}
    
    Therefore, by Lemma \ref{lem-massbound2}, together with \eqref{eq-mass1} and \eqref{eq-delta-2}, we obtain the chain of inequalities
    \begin{align}
        M_{\mathbb{H}^\fm}\bigl(T \mathbin{\llcorner} B_{\mathbb{H}^\fm}(P, 1)\bigr) & = M_{\mathbb{H}^\fm}\bigl((\Phi_\# T) \mathbin{\llcorner} B_{\mathbb{H}^\fm}(\Phi(P), 1)\bigr) \notag \\
        &\leq e^{-\frac{1}{c_0\delta}} M_{\mathbb{H}^\fm}\bigl((\Phi_\# T) \mathbin{\llcorner} B_{\mathbb{H}^\fm}(\Phi(P), 2)\bigr) + C(\fm) \notag \\
        &= e^{-\frac{1}{c_0\delta}} M_{\mathbb{H}^\fm}\bigl(T \mathbin{\llcorner} B_{\mathbb{H}^\fm}(P, 2)\bigr) + C(\fm) \notag \\
        &\leq e^{-\frac{1}{c_0\delta}} e^{2n}\cdot \bigl(x^\fm(P)\bigr)^{-n} M_{\mathbb{R}^\fm}(T) + C(\fm) \notag \\
        &\leq C\bigl(M_{\mathbb{R}^\fm}(T) +1\bigr),\label{eq-C-mass}
    \end{align}
    where $C$ in \eqref{eq-C-mass} depends on $\fm$ and $\Gamma$.
\end{proof}

As the proof of Theorem \ref{thm-mass} is essentially local, we in fact establish the following lemma.

\begin{lemma}
 Let $\Gamma$ be a $C^{1}$ submanifold of the hyperplane at infinity with \eqref{local-C1alpha} holds, let $\rho_\Gamma$ be defined as in Section \ref{sec-pre} and let $P\in G_r$ with $r < \tfrac{1}{2}\rho_\Gamma$. Assume that $T$ is an area-minimizing locally rectifiable $n$-current in $\mathbb{H}^\fm$ that is asymptotic to $\Gamma$, satisfying 
\begin{align}
    \partial T \mathbin{\llcorner} B_{\mathbb{H}^\fm}(P, 2) = 0,
\end{align}
and \eqref{eq-Proj1} with $\delta=\bigl(e^2 x^\fm(P)\bigr)^{1+\alpha}$. If in addition, for some small constant $c_0=c_0(\fm, \Gamma)>0$, the bound
\begin{align}
     M_{\mathbb{H}^\fm}\bigl(T \mathbin{\llcorner} B_{\mathbb{H}^\fm}(P, 2)\bigr) < e^{c_0 \bigl(x^\fm(P)\bigr)^{-\alpha}}
\end{align}
holds, then
\begin{align}
    M_{\mathbb{H}^\fm}\bigl(T \mathbin{\llcorner} B_{\mathbb{H}^\fm}(P, 1)\bigr) < C(\fm, n).
\end{align} 
\end{lemma}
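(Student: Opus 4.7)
The plan is to recognize that this lemma is the local version of Theorem \ref{thm-mass}, with the hypotheses tailored so that the exact same rescaling and Lemma \ref{lem-massbound2} argument carries through. The global normal-current assumption is only ever used in Theorem \ref{thm-mass} to derive the projection identity \eqref{eq-Proj1}; here that identity is imposed directly as a hypothesis, so the rest of the scheme can be quoted almost verbatim.

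First I would apply the isometry $\Phi$ from \eqref{eq-phi}, which sends $P$ to a point with $x^\fm(\Phi(P))=1$ while preserving the hyperbolic mass, and push $T$ forward. The pushforward $\Phi_\# T$ satisfies $\partial (\Phi_\# T) \mathbin{\llcorner} B_{\mathbb{H}^\fm}(\Phi(P),2) = 0$ and obeys the rescaled projection identity \eqref{eq-Proj1} onto a disk $\tilde D_{2-\delta}\subset H^+$ by hypothesis. Invoking Lemma \ref{lem-rho-Gamma} with the coordinate choice around $\Phi(P)$, exactly as in the computation of \eqref{eq-delta-2}, the Gromov–Hausdorff distance between $\operatorname{supp}\bigl((\Phi_\# T)\mathbin{\llcorner} B_{\mathbb{H}^\fm}(\Phi(P),2)\bigr)$ and $\tilde D_2$ is bounded by
\begin{align}
\delta = e^{2+2\alpha}\bigl(x^\fm(P)\bigr)^{\alpha},
\end{align}
so all hypotheses of Lemma \ref{lem-massbound2} are met.

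Next I would apply Lemma \ref{lem-massbound2}, yielding a constant $c_1 = c_1(\fm,n)$ such that
\begin{align}
M_{\mathbb{H}^\fm}\bigl(T \mathbin{\llcorner} B_{\mathbb{H}^\fm}(P,1)\bigr) \leq e^{-1/(c_1\delta)}\, M_{\mathbb{H}^\fm}\bigl(T \mathbin{\llcorner} B_{\mathbb{H}^\fm}(P,2)\bigr) + C(n).
\end{align}
Inserting the hypothesized doubly exponential mass bound, the right-hand side is controlled by
\begin{align}
\exp\!\Bigl(\bigl(x^\fm(P)\bigr)^{-\alpha}\bigl(c_0 - e^{-2-2\alpha}/c_1\bigr)\Bigr) + C(n).
\end{align}
Choosing the constant $c_0 = c_0(\fm,\Gamma)$ in the hypothesis to satisfy $c_0 < e^{-2-2\alpha}/c_1$ makes the coefficient of $\bigl(x^\fm(P)\bigr)^{-\alpha}$ strictly negative, so the exponential is bounded by $1$ uniformly in $P\in G_r$, and the desired conclusion $M_{\mathbb{H}^\fm}\bigl(T \mathbin{\llcorner} B_{\mathbb{H}^\fm}(P,1)\bigr) < C(\fm, n)$ follows.

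The only genuinely delicate point is the calibration of the constant $c_0$: it must be chosen \emph{before} invoking Lemma \ref{lem-massbound2} so that the two exponents compete correctly, and one must verify that $c_0$ depends only on $\fm$ and $\Gamma$ (through $\alpha$ and $\rho_\Gamma$), not on $P$ or $r$. Everything else—the projection identity, the Gromov–Hausdorff estimate via Lemma \ref{lem-rho-Gamma}, the rescaling via $\Phi$, and the slicing argument inside Lemma \ref{lem-massbound2}—has already been set up, so no new geometric input is required.
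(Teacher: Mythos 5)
Your proposal is correct and takes essentially the same approach as the paper, which itself presents this lemma as an immediate byproduct of the (local) proof of Theorem \ref{thm-mass}: you correctly observe that the hypotheses now directly supply the projection identity \eqref{eq-Proj1} (so the normal-current assumption is unnecessary), apply the rescaling $\Phi$ from \eqref{eq-phi}, obtain the Gromov--Hausdorff bound $\delta = e^{2+2\alpha}(x^\fm(P))^{\alpha}$ via Lemma \ref{lem-rho-Gamma}, and feed the doubly exponential mass hypothesis into Lemma \ref{lem-massbound2}. Your explicit calibration $c_0 < e^{-2-2\alpha}/c_1$ (with $c_1$ the constant from Lemma \ref{lem-massbound2}) is exactly the point the paper leaves implicit, and you correctly note it depends only on $\fm$ and $\Gamma$ through $\alpha$ and the dimension.
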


Next, we proceed to prove Theorem \ref{thm:rectifiable-C-1-alpha}.

\begin{proof}[Proof of Theorem \ref{thm:rectifiable-C-1-alpha}]
    By Theorem \ref{thm-mass}, for every $P\in G_r$ with $r<\tfrac{1}{2}\rho_\Gamma$, we have
    \begin{align}
        M_{\mathbb{H}^\fm}\bigl(T \mathbin{\llcorner} B_{\mathbb{H}^\fm}(P, 1)\bigr) < C(\fm, \Gamma)\bigl(M_{\mathbb{R}^\fm}(T) +1\bigr).
    \end{align}
    Following the argument in the proof of Lemma 1.4 in \cite{Lin1989CPAM}, we employ the standard squashing deformation and Theorem 5.3.14 in \cite{Federer}. By applying the isometric map $\Phi$ defined in \eqref{eq-phi}, we obtain the interior regularity estimate that the restricted current
    \begin{align}
        \Phi_\# T \mathbin{\llcorner} B_{\mathbb{H}^\fm}(\Phi(P), 1)
    \end{align}
    is represented by the graph of a multi-valued function $\textbf{u}$ satisfying
    \begin{align}
        \|\textbf{u}\|_{C^{1,1}(\bar\Omega)} \leq \epsilon\bigl(x^\fm(P), \fm, \Gamma\bigr),
    \end{align}
    where $\Omega$ is the Euclidean projection of $B_{\mathbb{H}^\fm}(\Phi(P), 1)$ onto $H^+$, and
    \begin{align}
        \epsilon\to 0 \quad \text{as} \quad x^\fm(P) \to 0.
    \end{align}
    The remainder of the proof follows the same lines as the proofs of Theorem 2.2 and Theorem 3.1 in \cite{Hardt&Lin1987}.
\end{proof}

Theorem \ref{thm-local-C1a} follows by the same argument, as it is essentially local.

\section{System of Minimal Surface Equations}\label{sec-PDE}

Let $Q\in \Gamma$ and let $H^+$ be defined as in \eqref{eq-H+-0}. Without loss of generality, we assume that for all $(x',x^\fm )\in H^+$,
\begin{align}
    x^i = 0 \quad \text{if} \quad n\leq i < \fm.
\end{align}
We define the domain $G_R$ as in
\eqref{eq-GR+-0}.

By Theorem \ref{thm-local-C1a}, the area-minimizing rectifiable current $T$ admits a graphical representation near $Q$ by the vector-valued function $\mathbf{u} = (u_1,\cdots,u_{\fm -n}): G_r \rightarrow \mathbb{R}^{\fm-n}$, in the form
\begin{align}
    (x^1,\cdots,x^{n-1},x^{\fm})\mapsto\bigl(x^1,\cdots,x^{n-1},u_1,\cdots,u_{\fm -n},x^{\fm}\bigr).
\end{align}
Setting $y^1=x^1,\cdots,y^{n-1}=x^{n-1},y^n=x^\fm$ as in \eqref{eq-y}, we may rewrite this graph mapping as
\begin{align}\label{eq-u-y}
    (y^1,\cdots,y^n)\mapsto \bigl(y^1,\cdots,y^n,u_1,\cdots,u_{\fm-n}\bigr).
\end{align}
For the remainder of this section, we adopt the coordinate system $y$ in place of $x$.

Let $\mathbf{e}_{i}$ denote the $i$-th standard basis vector of $H^+ \subseteq \mathbb{R}^\fm$. The corresponding $i$-th tangent vector to the graph of $\mathbf{u}$ is given by
\begin{align}
    (\mathbf{e}_{i}, \mathbf{u}_{,i}) := \bigl(\mathbf{e}_{i}, (u_1)_i, \cdots, (u_{\fm-n})_i\bigr).
\end{align}
The graph of $\mathbf{u}$ admits $\fm -n$ distinct normal vectors, which take the form
\begin{align}\label{eq-nu}
    \nu_s = \bigl(D_y u_s, -\mathbf{e}_{s}\bigr) = \bigl((u_s)_1, \cdots, (u_s)_{n}, -\mathbf{e}_{s}\bigr),
\end{align}
for $s=1, \cdots, \fm -n$.

We define the graph mapping
\begin{align}\label{eq-F}
    \mathbf{F}(y) = (y, \mathbf{u}(y)).
\end{align}
For convenience, we extend the coordinate $y$ on $H^+$ to a system of Euclidean coordinates in $\mathbb R^\fm$, where
\begin{align}\label{eq-Y}
    y^{n+1} = x^n, \quad y^{n+2} = x^{n+1}, \quad \cdots, \quad y^\fm = x^{\fm-1}.
\end{align}
Then, for indices satisfying $1\leq i\leq n$ and $n+1\leq l\leq \fm$, we have
\begin{align}
    \mathbf{F}_i = y^i, \quad \mathbf{F}_l = \mathbf{u}_l.
\end{align}

Let $M$ denote the graph of $\mathbf{u}$ (i.e., $M = \mathbf{F}(G_r)$). The mean curvature of $M$ with respect to the normal vector $\nu_s$ is given by
\begin{align}\label{eq-Hj}
    H_s = g^{ij}_M \bigl\langle\mathbf{F}_{, ij}, \nu_s\bigr\rangle_{g_{\mathbb{H}^{\fm}}} = \bigl\langle\Delta_{g_M}\mathbf{F}, \nu_s\bigr\rangle_{g_{\mathbb{H}^{\fm}}}.
\end{align}
If all mean curvatures $H_s$ vanish, then $\Delta_{g_M}\mathbf{F}=0$. To establish this, we start with the covariant derivative expansion of $\mathbf{F}_{,ij}$: for $1\leq i,j\leq n,$
\begin{align}
    \mathbf{F}_{,ij} &= \nabla_{\mathbf{F}_* \frac{\partial}{\partial y^i}}\left(\mathbf{F}_* \frac{\partial}{\partial y^j}\right)\\
    &=\nabla_{\frac{\partial \mathbf{F}_l}{\partial y^i}\frac{\partial}{\partial y^l}}\left(\frac{\partial \mathbf{F}_k}{\partial y^j}\frac{\partial}{\partial y^k}\right) \\
    &=\frac{\partial^2 \mathbf{F}_k}{\partial y^i \partial y^j}\cdot\frac{\partial}{\partial y^k} +\frac{\partial \mathbf{F}_l}{\partial y^i}\frac{\partial \mathbf{F}_k}{\partial y^j} 
    \cdot \nabla_{\frac{\partial}{\partial y^l}}\frac{\partial}{\partial y^k},
\end{align}
where $\nabla$ denotes the Levi-Civita connection of $\mathbb{H}^{\fm}$. 
Contracting with the inverse metric $g_M^{ij}$ yields
\begin{align}
    \Delta_{g_M} \mathbf{F} = g_M^{ij}\left(\frac{\partial^2 \mathbf{F}_k}{\partial y^i \partial y^j}\cdot\frac{\partial}{\partial y^k} +\frac{\partial \mathbf{F}_l}{\partial y^i}\frac{\partial \mathbf{F}_k}{\partial y^j} 
    \cdot \nabla_{\frac{\partial}{\partial y^l}}\frac{\partial}{\partial y^k} \right).
\end{align}
Recall that $y^{n}$ is the height coordinate in the hyperbolic space; the connection coefficients satisfy the explicit formula
\begin{align}
    \nabla_{\frac{\partial}{\partial y^l}}\frac{\partial}{\partial y^k} = -\frac{1}{y^{n}}\left(\delta_{ln} 
    \frac{\partial}{\partial y^k} +\delta_{k n} 
    \frac{\partial}{\partial y^l}-\delta_{l k} 
    \frac{\partial}{\partial y^{n}}\right).
\end{align}
We compute the quadratic term in the Laplacian expansion. For simplicity, we suppress the Einstein summation convention in this calculation, and obtain
\begin{equation}\label{eq-DXDX}
g_M^{ij}\frac{\partial \mathbf{F}_l}{\partial y^i}\frac{\partial \mathbf{F}_k}{\partial y^j} 
    \cdot \nabla_{\frac{\partial}{\partial y^l}}\frac{\partial}{\partial y^k} = \begin{cases}
    -\frac{1}{y^{n}}g_M^{l k} \left(\delta_{ln}\frac{\partial}{\partial y^{k}} +\delta_{kn}\frac{\partial}{\partial y^{l}}-\delta_{lk}\frac{\partial}{\partial y^n}\right), & 1\leq l,k\leq n;\\
    -\frac{1}{y^{n}}\frac{\partial u_{l-n}}{\partial y^i} 
    \cdot\delta_{kn} g_M^{i k}\frac{\partial}{\partial y^l}, &n+1\leq l\leq \fm \text{ and } 1\leq k \leq n;\\
    \frac{1}{y^{n}}\frac{\partial u_{l-n}}{\partial y^i}\frac{\partial u_{k-n}}{\partial y^j}\cdot\delta_{l k}g_M^{ij}\frac{\partial}{\partial y^{n}}, &n+1\leq l,k \leq \fm.
    \end{cases}
\end{equation}
Combining \eqref{eq-nu}, \eqref{eq-Hj}, and \eqref{eq-DXDX}, together with the metric representation
\begin{align}
    g_{ij}^M = (y^n)^{-2}\left(\delta_{ij} +\sum_{l=1}^{\fm -n} \frac{\partial u_l}{\partial y^i}\frac{\partial u_l}{\partial y^j}\right),
\end{align}
the vanishing mean curvature condition $H_s=0$ implies the following system of PDEs. Defining the rescaled metric
$g_{ij} = (y^n)^{2}g_{ij}^M,$
which coincides with the metric defined in \eqref{eq-gij}, we have
\begin{align}\label{eq-main}
    g^{ij}\frac{\partial^2 u_s}{\partial y^i \partial y^j} - \frac{n}{y^{n}} \frac{\partial u_s}{\partial y^{n}} =0,
\end{align}
for $s=1, \cdots, \fm-n$. This is the primary system of minimal surface equations investigated in the present paper. In summary, we have established Lemma \ref{lem-Main} as well as the following lemma.

\begin{lemma}
If the graph of $\mathbf{u}\in C^1$ in the form given by \eqref{eq-u-y}, defined over a domain $\Omega \subseteq H^+$, is an $n$-dimensional minimal submanifold of $\mathbb{H}^\fm$ (i.e., all mean curvatures vanish), then the PDE system \eqref{eq-main} holds. 
\end{lemma}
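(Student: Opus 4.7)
The plan is to complete the index computation initiated in the excerpt: expand $\langle \Delta_{g_M}\mathbf{F}, \nu_s\rangle_{g_{\mathbb{H}^\fm}}$ in the chosen coordinates and check that its vanishing for each $s = 1, \dots, \fm - n$ is equivalent to \eqref{eq-main}. Three structural observations reduce the argument to bookkeeping: the normal $\nu_s$ from \eqref{eq-nu} has a single nonzero component in its last $\fm - n$ coordinates, namely $-1$ in slot $n + s$; the graph metric factors as $g^M_{ij} = (y^n)^{-2}g_{ij}$ with $g_{ij}$ as in \eqref{eq-gij}, so that $g_M^{ij} = (y^n)^2 g^{ij}$; and the hyperbolic pairing is $\langle \partial_k, V\rangle_{g_{\mathbb{H}^\fm}} = (y^n)^{-2}V^k$ for any Euclidean vector $V = V^k\partial_k$. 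Since $\mathbf{u}$ is assumed only $C^1$, the second derivatives appearing in the computation should initially be interpreted weakly; alternatively, classical interior regularity for minimal graphs upgrades $\mathbf{u}$ to analytic, so the identity holds classically.

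I would then compute $\langle \Delta_{g_M}\mathbf{F}, \nu_s\rangle_{g_{\mathbb{H}^\fm}}$ in two pieces. The intrinsic second-order piece $g_M^{ij}\partial_i\partial_j \mathbf{F}_k \,\partial_k$ paired with $\nu_s$ collapses to the single index $k = n + s$, since $\partial_i\partial_j\mathbf{F}_k = 0$ for $k \le n$ and $\langle \partial_k, \nu_s\rangle = 0$ for $k \ge n+1$ with $k \ne n+s$; this yields $-(y^n)^{-2}g_M^{ij}\partial_i\partial_j u_s = -g^{ij}\partial_i\partial_j u_s$. For the Christoffel piece, each of the three cases in \eqref{eq-DXDX} contributes after pairing with $\nu_s$. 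The purely horizontal case ($1 \le l, k \le n$) pairs with both the horizontal entries $(u_s)_i$ and the vertical entry $(u_s)_n$ of $\nu_s$; the mixed case ($n+1 \le l \le \fm$, $1 \le k \le n$, together with its symmetric partner) survives only for $l = n+s$ and produces first-derivative terms that cancel the horizontal cross-terms from Case 1; the fully-normal case ($n+1 \le l, k \le \fm$) survives only for $l = k = n+s$ and produces a quadratic-in-$\nabla u_s$ term that, together with the residual from Case 1, reassembles via $g_{ij} = \delta_{ij} + \sum_l (u_l)_i(u_l)_j$ into a single term proportional to $(n/y^n)\partial_n u_s$.

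Collecting the contributions yields, up to the sign convention on $\nu_s$, the identity
\begin{align*}
H_s = -(y^n)^{-2}\Bigl(g^{ij}\frac{\partial^2 u_s}{\partial y^i \partial y^j} - \frac{n}{y^n}\frac{\partial u_s}{\partial y^n}\Bigr),
\end{align*}
so that $H_s = 0$ for every $s$ is precisely \eqref{eq-main}. The coefficient $n$ in the vertical term arises from the trace of the identity on the $n$ horizontal directions contracted against the $-\delta_{lk}\partial_n/y^n$ piece of the hyperbolic Christoffel symbols, combined with the identity $g_M^{ij} = (y^n)^2 g^{ij}$. The main obstacle is not conceptual but the careful tracking of the $(y^n)^{\pm 2}$ factors together with the coordinated cancellation between the three cases of \eqref{eq-DXDX}; to avoid sign errors or miscounts I would exploit the sparsity of $\nu_s$ from the outset, retaining only terms with index $n + s$, and use the symmetry of $g_M^{ij}$ under $i \leftrightarrow j$ to reduce the mixed case to a single representative term with a factor of $2$.
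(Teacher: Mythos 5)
Your proposal is correct and follows essentially the same route as the paper: both compute $H_s = g_M^{ij}\langle \mathbf{F}_{,ij}, \nu_s\rangle_{g_{\mathbb{H}^\fm}}$ by splitting into the coordinate-second-derivative piece and the Christoffel-symbol piece of \eqref{eq-DXDX}, exploiting the sparsity of $\nu_s$ and the relation $g_M^{ij} = (y^n)^2 g^{ij}$ to reduce the contraction to index $n+s$, and collecting terms to obtain \eqref{eq-main}. The only material you do not cover is the paper's independent second derivation via the first variation of the area functional with test deformations, but that is an alternative proof rather than part of the argument for this lemma, so no gap results.
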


\medskip
We may also derive the system \eqref{eq-main} via the method of variations.
Consider an arbitrary smooth map defined on $G_r$, given in the form
\begin{align}
    \Phi: y \mapsto (\phi(y), \eta(y)),
\end{align}
where $\phi = (\phi_1, \cdots, \phi_n)$ and $\eta = (\eta_1, \cdots, \eta_{\fm -n})$. For the graph mapping $\mathbf{F}$ defined in \eqref{eq-F}, we write the induced metric on the graph of $\mathbf{u}$ as
\begin{align}\label{eq-h}
h_{ij}=g_{ij}/(y^n)^2,  
\end{align}
where $(g_{ij})$ is given by \eqref{eq-gij}.
Let $g = \det(g_{ij})$ and $h = \det(h_{ij})$ denote the determinants of the respective metric tensors. Then, for any compact domain $\Omega \subseteq G_r$, the area functional of the graph is given by
\begin{align}
    A(\mathbf{F}): =\mathrm{Area}(\mathbf{F}|_{\Omega})=\int_\Omega\sqrt{h} \,dy=\int_\Omega \frac{1}{(y^n)^n}\sqrt{g}\, dy.
\end{align}
For simplicity, we suppress the volume element $dy$ in the calculation and compute the first variation of the area functional:
\begin{equation}\label{area-F}
\begin{aligned}
\left.\frac{d}{dt}\right|_{t=0}A(\mathbf{F}+t \Phi)
&=\int_{\Omega}\left.\frac{d}{dt}\right|_{t=0}\frac{1}{(y^n+t\phi_n)^n}\sqrt{\det\left(\left\langle (\mathbf{F}+t\Phi)_{y^i},(\mathbf{F}+t\Phi)_{y^j}\right\rangle_{\mathbb{R}^{\fm}}\right)}\\
&=-\int_\Omega \frac{n}{(y^n)^{n+1}}\phi_n\sqrt{g}+\int_\Omega \frac{1}{(y^n)^{n}}\frac{1}{2\sqrt{g}}\left.\frac{d}{dt}\right|_{t=0}\det\left(\left\langle (\mathbf{F}+t\Phi)_{y^i},(\mathbf{F}+t\Phi)_{y^j}\right\rangle_{\mathbb{R}^{\fm}}\right)\\
&=-\int_\Omega \frac{n}{(y^n)^{n+1}}\phi_n\sqrt{g}+\int_\Omega \frac{1}{(y^n)^{n}}\frac{1}{2\sqrt{g}}\left(\left.\frac{d}{dt}\right|_{t=0}\left\langle (\mathbf{F}+t\Phi)_{y^i},(\mathbf{F}+t\Phi)_{y^j}\right\rangle_{\mathbb{R}^{\fm}}\right) g g^{ij}\\
&=-\int_\Omega \frac{n}{(y^n)^{n+1}}\phi_n\sqrt{g}+\int_\Omega \frac{1}{(y^n)^n}\sqrt{g}g^{ij}\left\langle \mathbf{F}_{y^j},\Phi_{y^i}\right\rangle_{\mathbb{R}^{\fm}}.
\end{aligned}
\end{equation}
Next, we select suitable test functions. Let $\zeta\in C^{\infty}_c(\Omega)$ and set
\begin{align}
    \zeta_i =\frac{\partial \zeta}{\partial y^i}.
\end{align}
We denote by $\mathbf{e}_i$ the standard basis vector of $\mathbb{R}^\fm$ whose $i$-th component is $1$ and all other components are $0$.
\begin{itemize}
    \item Take 
    \begin{align}
        \phi(y)=\zeta(y) \mathbf{e}_\alpha,\quad \eta(y)=\mathbf{0},
    \end{align}
    for $\alpha=1,\cdots,n-1$. Substituting this into the first variation formula yields
    \begin{align}
    \int_\Omega \frac{1}{(y^n)^n}\zeta_i\delta_{j\alpha}\sqrt{g}\,g^{ij}=0.    
    \end{align}
    Hence, we deduce the identity
    \begin{equation}\label{choose-phi-alpha}
    \left(\frac{1}{(y^n)^n}\sqrt{g}\,g^{i\alpha}\right)_{,i}=0\quad\text{for }  \alpha=1,\cdots,n-1.
    \end{equation}

    \item Take
    \begin{align}
        \phi(y)=\zeta(y)\mathbf{e}_n,\quad \eta(y)=\mathbf{0}.
    \end{align}
    Substituting this test function into the variation formula gives
    \begin{align}
       -\int_\Omega \frac{n}{(y^n)^{n+1}}\zeta\sqrt{g}+\int_\Omega \frac{1}{(y^n)^n}\zeta_i\delta_{jn}\sqrt{g}\,g^{ij}=0. 
    \end{align}
    Hence, we obtain the divergence identity
    \begin{equation}\label{choose-phi-n}
    \left(\frac{1}{(y^n)^n}\sqrt{g}\,g^{in}\right)_{,i}=-\frac{n}{(y^n)^{n+1}}\sqrt{g}.
    \end{equation}

    \item Take 
    \begin{align}
        \phi(y)=\mathbf{0},\quad \eta(y)=\zeta(y) \mathbf{e}_s,
    \end{align}
    for $s=1,\cdots,\fm-n$. This leads to the integral identity
    \begin{align}\label{eq-choose-phi-0}
        \int_\Omega \frac{1}{(y^n)^n}\sqrt{g}\,g^{ij}(u_s)_{,j}\zeta_{,i}=0.
    \end{align}
\end{itemize}

Combining the identities \eqref{choose-phi-alpha}, \eqref{choose-phi-n}, and \eqref{eq-choose-phi-0}, we compute the divergence of the relevant expression:
\begin{equation}\label{choose-eta}
    \begin{aligned}
   0&=\left(\frac{1}{(y^n)^n}\sqrt{g}\,g^{ij}(u_s)_{,j}\right)_{,i} \\
   &=\left(\frac{1}{(y^n)^n}\sqrt{g}\,g^{ij}\right)_{,i}(u_s)_{,j}+\frac{1}{(y^n)^n}\sqrt{g}\,g^{ij}(u_s)_{,ij}\\
   &=-\frac{n}{(y^n)^{n+1}}\sqrt{g}\,(u_s)_{,n}+\frac{1}{(y^n)^n}\sqrt{g}\,g^{ij}(u_s)_{,ij}.
   \end{aligned}
\end{equation}
Dividing both sides by $\frac{1}{(y^n)^n}\sqrt{g}$ (which is non-vanishing), we conclude that
\begin{equation}\label{main-equation}
    g^{ij}(u_s)_{,ij}-\frac{n}{y^n}(u_s)_{,n}=0,\quad\text{for }s=1,\cdots,\fm -n,
\end{equation}
which coincides with the minimal surface system \eqref{eq-main}.

\section{Higher Order Regularity Theorems}\label{sec-HR}
We investigate the minimal surface system \eqref{eq-main}, where the vector-valued function $\mathbf{u}= (u_1, \cdots, u_{\fm-n})$ is defined on the domain $B^+_r$ specified by \eqref{eq-HR+-0}. The matrix $(g^{ij})$ appearing in the system denotes the inverse of the metric tensor
\begin{align}
    g_{ij} = \delta_{ij} +\sum_{l=1}^{\fm -n} \frac{\partial u_l}{\partial y^i}\frac{\partial u_l}{\partial y^j}.
\end{align}

Theorem \ref{thm-local-C1a} guarantees that the solution $\mathbf{u}$ is of class $C^{1,\alpha}$ up to the boundary. We introduce the tangential domain
\begin{align}
    B_r' = \{y' =(y^1,\cdots, y^{n-1}): |y'| <r\}.
\end{align}

By virtue of the coordinate transformations \eqref{eq-Q-coor}--\eqref{eq-H-coor}, we may impose the following normalized boundary conditions:
\begin{align}\label{eq-varphi-0}
    &\mathbf{u} (y', 0) = \varphi(y'), \quad \text{for } y'\in B_r',\\
    &\mathbf{u} (\mathbf{0}, 0) = \varphi(\mathbf{0}) = \mathbf{0},\quad D\varphi ({0}) = \mathbf{0},\label{eq-varphi-01}
\end{align}
where $\varphi$ is the $(\fm-n)$-valued local defining function of the boundary manifold $\Gamma$, and $\varphi\in C^{1,\alpha}(B_r')$.

\begin{lemma}\label{lem-c1=0}
    Let $\alpha \in (0,1], r>0$ be given constants, and let $\mathbf{u} \in C^{1,\alpha}(\overline{B^+_r}) \cap C^2(B^+_r)$ be a solution to the minimal surface system \eqref{eq-main} in $B^+_r$, satisfying the boundary conditions \eqref{eq-varphi-0}--\eqref{eq-varphi-01}. Suppose further that the closure of the graph of $\mathbf{u}$, denoted by $\mathcal{C}(\operatorname{graph}(\mathbf{u}))\subseteq \mathbb R^\fm$, admits a vertical tangent plane at every point $P$ of the form
    \begin{align}
    P=(y', 0) \in \mathcal{C}(\operatorname{graph}(\mathbf{u})),
    \end{align}
    with respect to the Euclidean metric of $\mathbb{R}^\fm$. Then the following asymptotic expansion holds:
    \begin{align}
        \mathbf{u}(y)= \varphi(y') + O\bigl((y^n)^{1+\alpha}\bigr),
    \end{align}
    where the remainder term $O\bigl((y^n)^{1+\alpha}\bigr)$ is itself a $(\fm-n)$-valued $C^{1,\alpha}$ function.
\end{lemma}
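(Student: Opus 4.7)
My plan is to convert the geometric condition (vertical tangent plane at boundary points) into a Neumann-type condition on $\mathbf{u}$, after which the conclusion reduces to a routine $C^{1,\alpha}$ Taylor expansion. No PDE information from \eqref{eq-main} is actually used; the lemma is purely a consequence of $C^{1,\alpha}$ regularity up to the boundary combined with the vertical-tangent assumption.

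First I would analyze the tangent plane at a boundary point $P=(y',0)$. Working in the $y$-frame on $\mathbb{R}^\fm$ from \eqref{eq-y}--\eqref{eq-Y}, the extended graph mapping is $y \mapsto (y,\mathbf{u}(y))$, so the tangent space to $\mathcal{C}(\operatorname{graph}(\mathbf{u}))$ at $P$ is spanned by the $n$ vectors $(\mathbf{e}_i, \partial_i \mathbf{u}(y',0))$ for $i=1,\dots,n$, which are well-defined because $\mathbf{u}\in C^{1,\alpha}(\overline{B^+_r})$. In the $y$-frame the hyperplane $\{x^\fm=0\}$ becomes $\{y^n=0\}$, whose unit normal is the vector having a $1$ in position $n$ and zeros elsewhere in $\mathbb{R}^\fm$. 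Requiring this vector to lie in the span above amounts to the linear system whose first $n$ coordinates force $c_i=\delta_{in}$, and then whose last $\fm-n$ coordinates force
\begin{align*}
    \partial_n \mathbf{u}(y',0) = \mathbf{0} \qquad\text{for every } y' \in B'_r.
\end{align*}
This is the precise PDE translation of the vertical-tangent-plane hypothesis.

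With this boundary identity in hand, I would apply the fundamental theorem of calculus in the $y^n$ variable. Using $\mathbf{u}(y',0)=\varphi(y')$ from \eqref{eq-varphi-0} and $\partial_n\mathbf{u}(y',0)=\mathbf{0}$, I write
\begin{align*}
    \mathbf{u}(y',y^n)-\varphi(y')
    = \int_0^{y^n}\partial_n\mathbf{u}(y',t)\,dt
    = \int_0^{y^n}\bigl[\partial_n\mathbf{u}(y',t)-\partial_n\mathbf{u}(y',0)\bigr]\,dt.
\end{align*}
The $C^{1,\alpha}$ regularity of $\mathbf{u}$ up to the boundary bounds the integrand by $[\partial_n\mathbf{u}]_{C^\alpha(\overline{B^+_r})}\,t^\alpha$, which after integration yields the pointwise estimate
\begin{align*}
    |\mathbf{u}(y',y^n)-\varphi(y')| \leq \frac{[\partial_n\mathbf{u}]_{C^\alpha(\overline{B^+_r})}}{1+\alpha}\,(y^n)^{1+\alpha},
\end{align*}
which is exactly the claimed $O((y^n)^{1+\alpha})$ bound. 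Finally, the remainder $\mathbf{u}(y)-\varphi(y')$ is itself of class $C^{1,\alpha}$ because it is the difference of two $C^{1,\alpha}$ functions (regarding $\varphi$ as a $y$-independent extension in the $y^n$ variable).

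There is no genuine obstacle in this argument; the only nontrivial step is the identification of the vertical-tangent condition with the vanishing of $\partial_n\mathbf{u}$ on $\{y^n=0\}$, which is a coordinate-book-keeping exercise once one unpacks the $y$-frame conventions of Section \ref{sec-PDE}. The deeper content --- that $\mathbf{u}$ is $C^{1,\alpha}$ up to the boundary and that $\mathcal{C}(\operatorname{graph}(\mathbf{u}))$ indeed has vertical tangent planes at $\Gamma$ --- has already been supplied by Theorem \ref{thm-local-C1a}, so the present lemma is essentially the translation of those geometric facts into a quantitative asymptotic statement suitable for later boundary-expansion arguments.
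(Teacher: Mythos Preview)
Your proof is correct and follows essentially the same route as the paper's. The paper's argument writes the $C^{1,\alpha}$ Taylor expansion $\mathbf{u}=\varphi+\mathbf{c}_1(y')y^n+O((y^n)^{1+\alpha})$, then observes that $g_{nn}(y',0)=1+|\mathbf{c}_1(y')|^2$ and that the vertical tangent plane hypothesis forces $g_{nn}(y',0)=1$, hence $\mathbf{c}_1=\mathbf{0}$; since $\mathbf{c}_1=\partial_n\mathbf{u}(y',0)$, this is exactly the Neumann condition you extract directly via the linear-algebra argument on the tangent frame, and the remainder estimate is the same Taylor/FTC step in both cases.
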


\begin{proof}
    By the Taylor expansion theorem for vector-valued functions, we have the expansion
    \begin{align}
        \mathbf{u}(y) = \varphi(y') + \mathbf{c}_1 (y')y^n + O\bigl((y^n)^{1+\alpha}\bigr),
    \end{align}
    where $\mathbf{c}_1$ is an $(\fm-n)$-valued function on $B_r'$. Substituting this expansion into the definition of the metric component $g_{nn}$, we obtain
    \begin{align}
        g_{nn}(y', 0) = 1 + \bigl|\mathbf{c}_1(y')\bigr|^2.
    \end{align}
    The vertical tangent plane assumption implies that $g_{nn}(y', 0)=1$, which immediately yields $\mathbf{c}_1=\mathbf{0}$. This completes the proof.
\end{proof}

By virtue of Lemma \ref{lem-c1=0}, we may assume the following asymptotic expansion for the metric tensor components:
\begin{align}
    g_{ij} = \delta_{ij} +\sum_{l=1}^{\fm -n} \frac{\partial \varphi_l}{\partial y^i}\frac{\partial \varphi_l}{\partial y^j} +O\bigl((y^n)^\alpha\bigr),
\end{align}
where we adopt the convention that
\begin{align}
    \frac{\partial \varphi_l}{\partial y^n} = 0.
\end{align}

We deduce that for $\beta,\gamma=1,\cdots, n-1$, the tangential metric inverse $(g^{\beta\gamma})$ is the inverse of the matrix
\begin{align}\label{eq-g-inv1}
g_{\beta\gamma}=\delta_{\beta\gamma} +\sum_{l=1}^{\fm -n} \frac{\partial \varphi_l}{\partial y^\beta}\frac{\partial \varphi_l}{\partial y^\gamma} +O\bigl((y^n)^\alpha\bigr),
\end{align}
and the remaining metric inverse components satisfy
\begin{align}\label{eq-g-inv11}
    g^{nn} = 1+O\bigl((y^n)^\alpha\bigr),\quad g^{\beta n} = O\bigl((y^n)^\alpha\bigr).
\end{align}
Here, we note that $\alpha=1$ in the case where $\mathbf{u} \in C^{1,1}(\overline{B^+_r})$.

From the minimal surface system \eqref{eq-main}, the $(\fm-n)$-valued function defined by
\begin{align}
    \mathbf{v} =\mathbf{u} -\varphi
\end{align}
satisfies, for each $s=1,\cdots, \fm -n$, the PDE
\begin{align}\label{eq-g-inv3}
    (y^n)^2 g^{ij}(v_{s})_{,ij} - n y^n(v_s)_{,n} = -(y^n)^2 g^{ij}(\varphi_{s})_{,ij}.
\end{align}
This equation is uniformly elliptic after rescaling in a neighborhood of any point $P \in B^+_{r/2}$. Precisely, under the rescaled coordinate transformation
\begin{align}\label{eq-z}
    z = (y^n(P))^{-1}(x - x(P)),
\end{align}
equation \eqref{eq-g-inv3} is uniformly elliptic on the rescaled domain
\begin{align}
    \left\{z\in\mathbb{R}^n: |z|<\frac{1}{2}\right\}.
\end{align}

Using the auxiliary metric $h$ defined in \eqref{eq-h}, we apply the Schauder estimates and the maximum principle to establish the following lemma.
\begin{lemma}[Tangential Smoothness]\label{lem-tan-smooth}
    Let $\alpha\in (0,1], R>0$ be given constants, and let $\mathbf{u} \in C^{1,\alpha}(\overline{B^+_R})\cap C^2(B^+_R)$ be a solution to the minimal surface system \eqref{eq-main} in $B^+_R$. Suppose further that there exists a smooth function $\varphi \in C^{\infty}(B_R')$ such that
    \begin{align}\label{eq-varphi-1+alpha}
        \mathbf{u} -\varphi = O\bigl((y^n)^{1+\alpha}\bigr).
    \end{align}
    Then for any $r\in (0, R/2)$, any $P\in G_{r}^+$, and any integer $l\in \mathbb N$, there exists a positive constant $C_l>0$ independent of $P$ such that
    \begin{align}\label{eq-tang-smooth}
        \|D_{y'}^l (\mathbf{u} -\varphi)\|_{C^{2, \alpha}_h\bigl(B_h(P, \tfrac{1}{2})\bigr)}\leq C_l(y^n)^{2}.
    \end{align}
\end{lemma}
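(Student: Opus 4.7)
Write $v := \mathbf{u} - \varphi$. Subtracting $g^{ij}\varphi_{s,ij}$ from both sides of the system \eqref{eq-main} and using that $\varphi$ is independent of $y^n$, the vector $v$ satisfies
\begin{equation*}
g^{ij} v_{s,ij} - \frac{n}{y^n} v_{s,n} = -g^{ij}\varphi_{s,ij} \quad \text{in } B_R^+,
\end{equation*}
with boundary condition $v_s(y',0) = 0$ and a priori decay $|v_s|\leq C(y^n)^{1+\alpha}$ from \eqref{eq-varphi-1+alpha}. The strategy is a rescaling--Schauder bootstrap: for $P\in G_r$ with $h:=y^n(P)$, introduce $z=(y-y(P))/h$ and multiply the equation through by $(y^n)^2$. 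On a Euclidean ball of radius comparable to $1$ in $z$ (which matches $B_h(P,1)$), the rescaled operator $(y^n)^2 g^{ij}\partial_{ij}-n y^n\partial_n$ is uniformly elliptic, with $C^\alpha_z$--bounded coefficients (since $\mathbf{u}\in C^{1,\alpha}$) and forcing of $C^\alpha_z$ norm $O(h^2)$ (since $\varphi$ is smooth and appears multiplied by the prefactor $(y^n)^2$).

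The crucial base step is to upgrade the $L^\infty$ estimate to $\|v\|_{L^\infty(B_h(P,1))}\leq Ch^2$. Here $A(y^n)^2$ is a natural barrier: a direct computation yields
\begin{equation*}
g^{ij}\partial_{ij}\bigl(A(y^n)^2\bigr) - \frac{n}{y^n}\partial_n\bigl(A(y^n)^2\bigr) = 2A\bigl(g^{nn}-n\bigr),
\end{equation*}
and Lemma \ref{lem-c1=0} guarantees $g^{nn}(y',0)=1$, so the right-hand side tends to $-2A(n-1)<0$ as $y^n\to 0$. For $A$ chosen large relative to $\|g^{ij}\varphi_{s,ij}\|_\infty$, the maximum principle applied to $\pm v_s - A(y^n)^2$, with the side-boundary contributions absorbed by the a priori Hölder estimate, delivers $|v_s|\leq C(y^n)^2$. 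Equivalently, one may identify the formal quadratic coefficient $c_2(y')=g^{\beta\gamma}(y',0)\varphi_{s,\beta\gamma}(y')/[2(n-1)]$ from the indicial equation of the Fuchsian operator (whose indicial roots are $0$ and $n+1$) and argue that the remainder $v_s - c_2(y')(y^n)^2$ decays strictly faster than $(y^n)^2$.

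With the sharp $L^\infty$ bound in hand, interior Schauder estimates on the rescaled uniformly elliptic problem yield $\|v\|_{C^{2,\alpha}_z(\{|z|\leq 1/2\})}\leq C(\|v\|_{L^\infty}+\|\mathrm{RHS}\|_{C^\alpha_z})\leq Ch^2$, which translates into $\|v\|_{C^{2,\alpha}_h(B_h(P,1/2))}\leq C(y^n(P))^2$. For $l\geq 1$, differentiate the equation tangentially by $\partial_{y^{\beta_1}}\cdots\partial_{y^{\beta_l}}$ with all $\beta_i<n$; the commutator $(\partial_\beta g^{ij})v_{s,ij}$ is absorbed into the right-hand side and, by induction on $l$, is controlled in $C^\alpha_z$ with size $O(h^2)$ (using the smoothness of $\varphi$ and the inductive $C^{2,\alpha}_h$ bound on $D_{y'}^{l-1}v$). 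A further rescaled Schauder application then yields the desired estimate $\|D_{y'}^l v\|_{C^{2,\alpha}_h(B_h(P,1/2))}\leq C_l(y^n(P))^2$.

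The main obstacle will be the quadratic base estimate itself. The gap between the a priori decay $O((y^n)^{1+\alpha})$ and the target $O((y^n)^2)$ cannot be bridged by a naive Schauder argument: for $\alpha<1$, the $\|v\|_{L^\infty}$ term dominates the forcing in the rescaled Schauder bound and yields only $O((y^n)^{1+\alpha})$. Exploiting the Fuchsian structure of the degenerate operator --- either through a sharp barrier adapted to the indicial root $p=0$, or by explicit subtraction of the formal quadratic coefficient $c_2(y')(y^n)^2$ followed by a separate decay argument for the remainder --- therefore appears essential.
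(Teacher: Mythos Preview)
Your overall architecture---rescaled Schauder estimates, a maximum-principle step to reach quadratic $L^\infty$ decay, then tangential differentiation and induction---matches the paper's. Two points need correction.

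First, the barrier $A(y^n)^2$ alone does not close. On the lateral wall $\{|y'|=r\}$ you only know $|v_s|\le C(y^n)^{1+\alpha}$, and for $\alpha<1$ this dominates $A(y^n)^2$ as $y^n\to 0$, no matter how large $A$ is; the phrase ``side-boundary contributions absorbed by the a priori H\"older estimate'' is precisely where the comparison breaks. The paper's remedy is to use a two-part barrier
\[
M(y',y^n)=a|y'-y_0'|^2+b(y^n)^2
\]
on a small cylinder of tangential radius $\delta$ about $y_0'$. On the lateral wall $|y'-y_0'|=\delta$ one has $M\ge a\delta^2$, which dominates $|v_s|$ once $a$ is chosen large; on the top $y^n=\delta$ the same holds; and at the center $y'=y_0'$ the barrier collapses to $b(y^n)^2$, yielding the desired pointwise bound with a constant depending only on $b$ (hence on the forcing), not on $a$. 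Your alternative route via subtraction of $c_2(y')(y^n)^2$ faces the same issue: the remainder still satisfies a degenerate equation on a domain with a lateral boundary, and you have not supplied the ``separate decay argument.''

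Second, the barrier step is needed at \emph{every} $l$, not only $l=0$. The inductive hypothesis $\|D_{y'}^{l-1}v\|_{C^{2,\alpha}_h}\le C_{l-1}(y^n)^2$ yields, after one tangential derivative, only $|D_{y'}^l v|\le C(y^n)$ in $L^\infty$; feeding that into a rescaled Schauder estimate gives back $C(y^n)$, not $C(y^n)^2$. The paper therefore reapplies the two-part barrier to $D_{y'}^l v$ (whose equation has a bounded right-hand side controlled by $C_0,\dots,C_{l-1}$) to first obtain $\|D_{y'}^l v\|_{L^\infty}\le C(y^n)^2$, and only then invokes Schauder. Your sentence ``a further rescaled Schauder application then yields the desired estimate'' skips this necessary step.
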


\begin{proof}
    By means of the rescaled coordinate transformation \eqref{eq-z}, combined with the PDE \eqref{eq-g-inv3}, the asymptotic condition \eqref{eq-varphi-1+alpha}, and the Schauder estimates for elliptic equations, we immediately obtain the a priori estimate
    \begin{align}
          \|\mathbf{u} -\varphi\|_{C^{2, \alpha}_h\bigl(B_h(P, \tfrac{1}{2})\bigr)}\leq C (y^n)^{1+\alpha}.
    \end{align}

    We denote by $D_{y'}^l$ an arbitrary $l$-th order tangential differential operator, which takes the form $D_{y'}^\beta$ where $\beta$ is a multi-index satisfying $|\beta|=l$. Applying the tangential differential operator $D_{y'}^l$ to both sides of \eqref{eq-g-inv3}, we derive the differentiated PDE
    \begin{align}\label{eq-g-inv4}
         g^{ij}(D_{y'}^l v_s)_{,ij} - n  \frac{(D_{y'}^l v_s)_{,n}}{y^n} = - D_{y'}^l\bigl(g^{ij}(\varphi_{s})_{,ij}\bigr) -\sum_{m=0}^{l-1}\binom{l}{m} \bigl(D_{y'}^{l-m} g^{ij}\bigr)(D_{y'}^m v_s)_{,ij},
    \end{align}
    where for any $r\in (0, R/2)$ and any $P\in G_{r}^+$, the $C^\alpha_h\bigl(B_h(P,\frac{1}{2})\bigr)$-norm of the right-hand side is bounded by a constant depending only on
    \begin{align}
    \fm, \, n, \, l, \, \|\mathbf{u}\|_{C^{1,\alpha}(G_R)}, \, \|\varphi\|_{C^{l+2,\alpha}(B_R')}
    \end{align}
    and the constants
    \begin{align}\label{eq-C-0}
        C_0, \, C_1, \, \cdots, \, C_{l-1}
    \end{align}
    appearing in the estimate \eqref{eq-tang-smooth} for lower orders. Note that the constants in \eqref{eq-C-0} are redundant when $l=0$.

    For any fixed $\delta\in (0, r)$ and any $y_0' \in B_{r-\delta}'$, we introduce the barrier function
    \begin{align}
        M(y', y^n) = a|y'-y_0'|^2 + b(y^n)^2,
    \end{align}
    where $a,b>0$ are to be determined. Following the argument in the proof of Theorem 3.1 in \cite{HanJiang2023}, we apply the maximum principle for elliptic PDEs to establish the $L^\infty$-estimate
    \begin{align}
        \| D_{y'}^l (\mathbf{u} -\varphi) \|_{L^\infty(G_{r-\delta})}\leq C (y^n)^{2}.
    \end{align}
    Finally, by shrinking the radius $r$ if necessary, we complete the proof by invoking the rescaling argument once again.
\end{proof}
\begin{remark}\label{rem-tan}
    The estimate \eqref{eq-tang-smooth} can be improved to be, for any $q\in \mathbb{N},$
    \begin{align}\label{eq-tang-smooth-1}
        \|D_{y'}^l (\mathbf u -\varphi)\|_{C^{q, \alpha}_h(B_h(P, \frac{1}{2}))}\leq C_{q,l}(y^n)^{2}.
    \end{align} 
\end{remark}
Meanwhile, the minimal surface system \eqref{eq-main} can be recast in the following form for each $s=1,\cdots, \fm -n$:
\begin{align}\label{eq-g-inv2}
    (u_s)_{,nn} - \frac{n}{y^n}(u_s)_{,n} = (u_s)_{,nn}- g^{ij}(u_{s})_{,ij},
\end{align}
where the right-hand side contains only second-order tangential derivatives and mixed derivatives of $u_s$, provided that we neglect the derivatives of $\mathbf{u}$ appearing in the metric coefficients $g^{ij}$.

For any $(\fm-n)$-valued function $\mathbf{w} = (w_1,\cdots, w_{\fm-n})$ defined on $G_R$, we define the differential operator $\mathcal{Q}[\mathbf{w}]$ by
\begin{align}\label{eq-Q}
    \mathcal{Q}[\mathbf{w}] :=g^{ij}[\mathbf{w}]\frac{\partial^2 w_s}{\partial y^i \partial y^j} - \frac{n}{y^{n}} \frac{\partial w_s}{\partial y^{n}},
\end{align}
where $g^{ij}[\mathbf{w}]$ denotes the inverse of the metric tensor
\begin{align}
    g_{ij}[\mathbf{w}] =\delta_{ij} +\sum_{l=1}^{\fm -n} \frac{\partial w_l}{\partial y^i}\frac{\partial w_l}{\partial y^j}.
\end{align}
By means of the metric inverse estimates \eqref{eq-g-inv1}--\eqref{eq-g-inv11}, we establish the following lemma on formal series solutions.

\begin{lemma}[Formal Series Solution]\label{lem-Form-Comp}
    Let $\varphi \in C^\infty(B_r')$ be a smooth function. Then there exist smooth $(\fm-n)$-valued functions $\mathbf{c}_i$ and $\mathbf{c}_{i,j}$ on $B_r'$ (with $(i,j)\neq (n+1,0)$) that depend on $\varphi$ and the first global coefficient $\mathbf{c}_{n+1,0}$ such that, for any given smooth $(\fm-n)$-valued function $\mathbf{c}_{n+1,0}$ on $B_r'$, there exists a unique formal series of the form
    \begin{align}\label{eq-expan}
        \mathbf{u} = \varphi +\sum_{i=2}^{n}\mathbf{c}_i(y')(y^n)^i +\sum_{i=n+1}^\infty \sum_{j=0}^{\left\lfloor \frac{i-1}{n}\right\rfloor} \mathbf{c}_{i,j}(y')(y^n)^i\bigl(\log (y^n)\bigr)^j \quad \text{in } G_r,
    \end{align}
    which formally solves the minimal surface system \eqref{eq-main}. Here  $\mathbf{c}_i(y')=\mathbf{0}$ whenever $i\leq n$ is an even integer.

    In addition, we define the partial sum $\mathbf{T}_k$ by
    \begin{align}\label{eq-uk}
        \mathbf{T}_k = \varphi +\sum_{\substack{i=2 \\ i\ \text{even}}}^{2\left\lfloor \frac{n}{2}\right\rfloor}\mathbf{c}_i(y') (y^n)^i +\sum_{i=n+1}^k\sum_{j=0}^{\left\lfloor \frac{i-1}{n}\right\rfloor} \mathbf{c}_{i,j}(y') (y^n)^i\bigl(\log (y^n)\bigr)^j.
    \end{align}
    This partial sum satisfies the asymptotic estimate
    \begin{align}
        \mathcal{Q}(\mathbf{T}_k)=O\Bigl((y^n)^{k}\bigl(\log (y^n)\bigr)^{\left\lfloor \frac{k-1}{n}\right\rfloor}\Bigr).
    \end{align}
\end{lemma}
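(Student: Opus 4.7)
My plan is to substitute the ansatz \eqref{eq-expan} directly into the minimal surface system \eqref{eq-main} and solve the resulting hierarchy of equations order by order in $y^n$ and in powers of $\log y^n$. The central calculation concerns the action of the model operator $L_0 := \partial_n^2 - (n/y^n)\partial_n$ on a log-monomial $\mathbf{c}(y')(y^n)^i(\log y^n)^j$; a direct computation yields
\[
L_0\bigl[\mathbf{c}(y^n)^i(\log y^n)^j\bigr] = (y^n)^{i-2}\bigl[p(i)(\log y^n)^j + j(2i-1-n)(\log y^n)^{j-1} + j(j-1)(\log y^n)^{j-2}\bigr]\mathbf{c},
\]
where the indicial polynomial $p(i) := i(i-1-n)$ vanishes exactly at $i=0$ and at the resonance $i = n+1$, and is otherwise nonvanishing. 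The full operator $\mathcal{Q}$ differs from $L_0$ only by contributions from $g^{\alpha\beta}[\mathbf{u}]\partial_\alpha\partial_\beta$ and the $g^{\alpha n}$, $g^{nn}-1$ perturbations, each of which is of strictly lower order in $y^n$ at every step of the induction and so does not affect the leading indicial balance.

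First, by Lemma \ref{lem-c1=0} and the normalization \eqref{eq-varphi-0}, we have $\mathbf{c}_0 = \mathbf{0}$ and $\mathbf{c}_1 = \mathbf{0}$. I would then construct $\mathbf{c}_2, \ldots, \mathbf{c}_n$ inductively: at order $(y^n)^{i-2}$, matching coefficients in $\mathcal{Q}[\mathbf{T}_\infty] = 0$ produces a relation $p(i)\mathbf{c}_i = F_i$, where $F_i$ is an explicit differential polynomial in $\varphi$ and the already determined $\mathbf{c}_j$ with $j < i$. Since $p(i) \neq 0$ for $2 \leq i \leq n$, $\mathbf{c}_i$ is uniquely determined. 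The claimed parity-vanishing follows from the $y^n \mapsto -y^n$ symmetry of $\mathcal{Q}$: the components $g_{\alpha\beta}[\mathbf{u}]$ and $g_{nn}[\mathbf{u}]$ are even in $y^n$ while $g_{\alpha n}[\mathbf{u}]$ is odd, so whenever $\mathbf{u} - \varphi$ has a definite parity in $y^n$, the recursion preserves it; the coefficients of the opposite parity in the polynomial range $i \leq n$ are therefore forced to vanish.

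At the resonance $i = n+1, j = 0$, the leading coefficient $p(n+1)$ vanishes and no constraint is imposed on $\mathbf{c}_{n+1,0}$, so this coefficient may be prescribed freely. To absorb the residual forcing $F_{n+1,0}$ produced by the previously determined data, I would introduce the logarithmic companion $\mathbf{c}_{n+1,1}(y')(y^n)^{n+1}\log(y^n)$; the subleading term in the expansion above contributes $(2(n+1)-1-n) = n+1 \neq 0$ times $\mathbf{c}_{n+1,1}(y^n)^{n-1}$, which uniquely determines $\mathbf{c}_{n+1,1}$ from $F_{n+1,0}$. For $i > n+1$, the polynomial $p(i)$ is again invertible, and the induction continues, now with both polynomial and log-polynomial contributions present. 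The bound $j \leq \lfloor (i-1)/n\rfloor$ emerges from the observation that a new factor of $\log y^n$ can only be introduced when a previously constructed log-term interacts with the resonance at $i = n+1$ through the nonlinear coupling, and this event occurs at most once per $n$ advances in the $y^n$-order.

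The main obstacle will be the careful bookkeeping of the nonlinear coupling. The metric inverse $g^{ij}[\mathbf{T}_{k-1}]$ is itself a log-polynomial whose composition with the second derivatives of $\mathbf{T}_{k-1}$ mixes coefficients at several different $(i,j)$-types and could in principle inflate the log-exponent beyond the claimed bound. I would control this by introducing a double filtration on formal log-polynomials, indexed by $y^n$-order and by log-exponent, and verifying inductively that $\mathcal{Q}$ respects the filtration in the sense that $\mathcal{Q}[\mathbf{T}_k]$ contains only terms with $y^n$-order $\geq k$ and log-exponent $\leq \lfloor(k-1)/n\rfloor$. This yields the asymptotic bound $\mathcal{Q}(\mathbf{T}_k) = O\bigl((y^n)^k (\log y^n)^{\lfloor(k-1)/n\rfloor}\bigr)$ stated in the lemma.
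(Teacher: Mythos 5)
Your proposal follows essentially the same route as the paper's proof: rewrite the system in the Fuchsian form $(y^n)^2\mathbf{u}_{,nn}-ny^n\mathbf{u}_{,n}=(y^n)^2(\mathbf{u}_{,nn}-g^{ij}\mathbf{u}_{,ij})$, compute the indicial roots $i=0,\,n+1$ of $p(i)=i(i-1-n)$, solve order by order using the fact that the nonlinear right-hand side involving a coefficient $\mathbf{c}_{i,j}$ appears only at strictly higher $y^n$-order, introduce a logarithmic companion $\mathbf{c}_{n+1,1}$ determined by the nonzero subleading coefficient $n+1$, and leave $\mathbf{c}_{n+1,0}$ free. Your parity argument via the $y^n\mapsto -y^n$ symmetry of the recursion is just an equivalent reformulation of the paper's observation that $g^{\beta n}$ is odd while $g^{\beta\gamma}$, $g^{nn}$ are even; both the log-exponent bound $j\leq\lfloor(i-1)/n\rfloor$ and its filtration-style verification are likewise treated at the same level of sketchiness in the paper and in your proposal.
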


\begin{remark}
    When $n$ is even, we have the identity
    \begin{align}
        2\left\lfloor \frac{n}{2}\right\rfloor =n,
    \end{align}
    in which case all logarithmic terms in the series expansion \eqref{eq-expan} vanish identically.
\end{remark}

\begin{proof}[Proof of Lemma \ref{lem-Form-Comp}]
    By \eqref{eq-derivative}, we recast equation \eqref{eq-g-inv2} in the following equivalent form:
    \begin{align}\label{eq-u-form}
        (y^n)^2\mathbf{u}_{,nn} - n y^n\mathbf{u}_{,n} =(y^n)^2\bigl( \mathbf{u}_{,nn}- g^{ij}\mathbf{u}_{,ij}\bigr).
    \end{align}

    We initialize the coefficient recursion by setting $\mathbf{c}_0=\varphi$, and we proceed by induction: assume that we have formally determined the partial sum $\mathbf{T}_{i-1}$ for some integer $i\geq 2$.

    Consider a general term $\mathbf{c}_{i,j} (y^n)^i\bigl(\log (y^n)\bigr)^j$ appearing in the series expansion \eqref{eq-expan} of $\mathbf{u}$. Substituting this term into the left-hand side of \eqref{eq-u-form}, we compute its leading contribution, which is given by
    \begin{align}\label{eq-c-domi}
        i\bigl(i-(n+1)\bigr)\mathbf{c}_{i,j} (y^n)^i\bigl(\log (y^n)\bigr)^j.
    \end{align}

    Next, we expand the right-hand side of \eqref{eq-u-form} as a power series (with logarithmic terms) and isolate the terms involving $\mathbf{c}_{i,j}$. A key observation is that all such terms carry a factor of $(y^n)^{i+1}$ (or higher order, if logarithmic factors are disregarded). 

    Therefore, whenever 
    \begin{align}
        i\neq n+1,
    \end{align}
    we can uniquely solve for the coefficient $\mathbf{c}_{i,j}$ by equating the coefficients of the term $(y^n)^i\bigl(\log (y^n)\bigr)^j$ on both sides of \eqref{eq-u-form}. The resulting expression for $\mathbf{c}_{i,j}$ depends only on the earlier coefficients $\mathbf{c}_{i', j'}$ with $i'<i$.

For $k\leq n$, we first establish that $\mathbf{T}_k$ contains only even powers of $y^n$ via mathematical induction. We initialize the base case with $\mathbf{T}_2 = \varphi + \mathbf{c}_2(y')(y^n)^2$, which trivially consists of even powers of $y^n$ alone. Assume the inductive hypothesis holds: for any even integer  $k\in [2, n-1] $, the partial sum $\mathbf{T}_k$ contains only even powers of $y^n$.

We substitute $\mathbf{u}=\mathbf{T}_k$ into the right-hand side of \eqref{eq-u-form} and analyze the metric coefficient expansions. A key observation is that for $\beta,\gamma=1,\cdots, n-1$, the expansion of $g^{\beta n}[\mathbf{T}_k]$ comprises only odd powers of $y^n$, while the expansion of $g^{\beta \gamma}[\mathbf{T}_k]$ comprises only even powers of $y^n$. Combining these parity properties, we conclude that the entire right-hand side of \eqref{eq-u-form} is a sum of even powers of $y^n$.

By substituting $\mathbf{u}=\mathbf{T}_{k+1}$ into \eqref{eq-u-form} and invoking the earlier analysis of the leading term formula \eqref{eq-c-domi}, we uniquely deduce that $\mathbf{c}_{k+1}=\mathbf{0}$. This verifies the inductive step, and the claim follows by the principle of mathematical induction.

Next, we consider the critical case $i = n+1$. For $j\geq 1$, the leading term involving $\mathbf{c}_{i,j}$ on the left-hand side of \eqref{eq-u-form} simplifies to
\begin{align}
    (2i-1-n)j\mathbf{c}_{i,j}(y^n)^i\bigl(\log (y^n)\bigr)^{j-1} = (n+1)j\mathbf{c}_{n+1,j} (y^n)^{n+1}\bigl(\log (y^n)\bigr)^{j-1}.
\end{align}
Following the identical coefficient-matching procedure as above, we can uniquely solve for the coefficients $\mathbf{c}_{n+1,j}$ for all $j\geq 1$.

Finally, we note that the coefficient $\mathbf{c}_{n+1,0}$ cannot be determined by this recursive process—it serves as a free parameter. If we prescribe a smooth $(\fm-n)$-valued function $\mathbf{c}_{n+1,0}$ on $B_r'$, then the entire recursive scheme yields a unique formal series solution of the form \eqref{eq-expan}.
\end{proof}

For notational simplicity, we set $t = y^n$ throughout the subsequent discussion.

\begin{definition}\label{def-exp}
A function $\mathbf{w}$ (which may be single-valued or $(\fm-n)$-valued) is said to admit an expansion of order $t^k$ in $B^+_r$ if there exist smooth functions $\mathbf{c}_{i,j}(y')$ on $B_r'$, positive integers $N_i\in \mathbb{N}$, and a constant $\epsilon>0$ such that the decomposition
\begin{align}\label{eq-Taylor}
    \mathbf{w} = \mathbf{T}_k +\mathbf{R}_k := \sum_{i=0}^k \sum_{j=0}^{N_i} \mathbf{c}_{i,j} t^i (\log t)^j+\mathbf{R}_k
\end{align}
holds in $\overline{B^+_r}$. Here, the remainder term $\mathbf{R}_k$ satisfies two key conditions:
\begin{align}\label{eq-Rk-Cke}
 \mathbf{R}_k \in O(t^{k+\epsilon})\cap C^{k,\epsilon}(\overline{G_r}),
\end{align}
and for all nonnegative integers $p, q \in \mathbb{N}$, the tangential derivative estimate
\begin{align}\label{eq-Rk-Tan}
    \|D_{y'}^p \mathbf{R}_k \|_{C^{q,\alpha}_h(G_r)}\leq C_{p,q,k}\,t^{k+\epsilon}
\end{align}
is valid, where $C_{p,q,k}$ denotes a constant independent of $t$ and $y'$.
\end{definition}

We refer to the decomposition \eqref{eq-Taylor} as a Taylor expansion with logarithmic terms of $\mathbf{w}$.

By virtue of Lemma \ref{lem-tan-smooth} and Remark \ref{rem-tan}, we immediately conclude that the solution $\mathbf{u}$ admits an expansion of order $t$.

To derive the full Taylor expansion of $\mathbf{u}$ with logarithmic terms, we proceed by iteratively applying the following lemma.
\begin{lemma}\label{lem-Taylor-u}
    Assume the same hypotheses as in Lemma \ref{lem-tan-smooth}. For any integer $k\geq 1$, if $\mathbf{u}$ admits an expansion of order $t^k$ in $B^+_R$, then $\mathbf{u}$ admits an expansion of order $t^{k+1}$ in $B^+_r$ for every $r\in (0, R)$.
\end{lemma}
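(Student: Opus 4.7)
The plan is a one-step bootstrap built on Lemma \ref{lem-Form-Comp} (formal power series matching) and a rescaled maximum-principle argument in the spirit of Lemma \ref{lem-tan-smooth}. Write $t=y^n$. By uniqueness of the formal recursion, the existing coefficients of $\mathbf{u}$ up to order $t^k$ must coincide with those produced by the scheme of Lemma \ref{lem-Form-Comp} (with the free parameter $\mathbf{c}_{n+1,0}$, when $k\geq n+1$, read off from the given expansion of $\mathbf{u}$). The first step is to extend $\mathbf{T}_k$ to
$$\mathbf{T}_{k+1}=\mathbf{T}_k+\sum_{j=0}^{\lfloor k/n\rfloor}\mathbf{c}_{k+1,j}(y')\,t^{k+1}(\log t)^j,$$
where the new coefficients with $(k+1,j)\neq(n+1,0)$ are uniquely determined by the recursion, and $\mathbf{c}_{n+1,0}$ (only missing in the resonant case $k=n$) is recovered from $\mathbf{u}$ below. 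Lemma \ref{lem-Form-Comp} gives $\mathcal{Q}[\mathbf{T}_{k+1}]=O\bigl(t^{k+1}(\log t)^{\lfloor k/n\rfloor}\bigr)$.

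Setting $\mathbf{v}=\mathbf{u}-\mathbf{T}_{k+1}$, subtracting $\mathcal{Q}[\mathbf{T}_{k+1}]$ from $\mathcal{Q}[\mathbf{u}]=0$ and linearizing the nonlinearity $g^{ij}[\cdot]$ by the fundamental theorem of calculus yields a linear elliptic system of the form
$$L[v_s]+t^2\,a_s^{ij}(y)\,v_{s,ij}+t^2\,b_s^{l,i}(y)\,v_{l,i}+t^2\,c_s^{l}(y)\,v_l=f_s(y),\qquad s=1,\ldots,\fm-n,$$
where $L=t^2\partial_t^2-nt\partial_t$ is the Euler-type operator and the inhomogeneity $f_s=-t^2\mathcal{Q}[\mathbf{T}_{k+1}]_s$ satisfies $f_s=O\bigl(t^{k+3}(\log t)^{\lfloor k/n\rfloor}\bigr)$. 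On the rescaled balls $z=t(P)^{-1}(y-P)$ this is uniformly elliptic with $C^{0,\alpha}$ coefficients, so Schauder theory combined with a barrier of the form $M(y)=A|y'-y_0'|^2+Bt^{k+1}(\log t)^{N'}$ (or $A|y'-y_0'|^2+Bt^{k+1+\epsilon}$ when no resonance occurs) controls $\mathbf{v}$ from above and below via the maximum principle, yielding $\mathbf{v}=O\bigl(t^{k+1+\epsilon}(\log t)^{N'}\bigr)$. In the resonant case $k=n$, the ODE in $t$ obtained by freezing the tangential variables exhibits the undetermined mode $\mathbf{c}_{n+1,0}(y')t^{n+1}$; the limit
$$\mathbf{c}_{n+1,0}(y')=\lim_{t\to 0^+}t^{-(n+1)}\Bigl(\mathbf{u}(y',t)-\mathbf{T}_n(y',t)-\sum_{j\geq 1}\mathbf{c}_{n+1,j}(y')\,t^{n+1}(\log t)^j\Bigr)$$
is shown to exist and to be smooth in $y'$ by combining the pointwise estimate above with the tangential smoothness conclusion of Lemma \ref{lem-tan-smooth} and Remark \ref{rem-tan}.

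Finally, to obtain the tangential estimates \eqref{eq-Rk-Tan} for the new remainder $\mathbf{R}_{k+1}=\mathbf{v}$, I differentiate the linearized system $p$ times in $y'$, apply Schauder estimates on the rescaled unit balls, and iterate the barrier/maximum-principle argument exactly as in the proof of Lemma \ref{lem-tan-smooth} and Remark \ref{rem-tan}; the radius loss from $R$ to any prescribed $r<R$ is absorbed in the rescaling. The main obstacle is the resonance at $k+1=n+1$: the indicial roots $0$ and $n+1$ of $L$ make its kernel nontrivial at this order, simultaneously forcing a prescribed logarithmic term $t^{n+1}\log t$ (computed by formal matching) and a free $t^{n+1}$ mode that must be identified from $\mathbf{u}$ itself as a smooth function of $y'$, rather than merely shown to exist. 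Controlling these two modes under the full tangentially-coupled linear system, while simultaneously propagating the $C^{q,\alpha}_h$ estimates tangentially, is the technical heart of the argument and the point at which the logarithmic structure of the expansion \eqref{eq-u-exp-0} is created.
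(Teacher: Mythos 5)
Your proposal takes a genuinely different route from the paper, and the difference matters. The paper's proof of Lemma \ref{lem-Taylor-u} has two short steps: (i) the nonlinear forcing $F_s = t^2 u_{s,tt} - t^2 g^{ij}u_{s,ij}$ gains one order over $\mathbf{u}$, since the remainder $\mathbf{R}_k$ acquires an extra factor of $t$ after differentiation and multiplication by $t^2$; and (ii) the explicit variation-of-parameters formula \eqref{eq-log-int} for the Euler-type ODE $t^2\partial_t^2 - nt\partial_t$ converts an expansion of $F_s$ of order $t^{k+1}$ into one of $u_s$ of the same order, automatically generating the $t^{n+1}\log t$ term and expressing the free $t^{n+1}$ mode in terms of the boundary value $u_s(y',r)$ and integrals of $F_s$. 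You instead build the partial sum $\mathbf{T}_{k+1}$ first, linearize, and run Schauder estimates together with a barrier and the maximum principle on $\mathbf{v}=\mathbf{u}-\mathbf{T}_{k+1}$.

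Two concrete gaps arise in this alternate route. First, the barrier $A|y'-y_0'|^2 + B\,t^{k+1}(\log t)^{N'}$ cannot serve as a supersolution once $k\geq n$: the leading normal part $\partial_t^2 - (n/t)\partial_t$ applied to $t^m$ gives $m(m-1-n)t^{m-2}$, which is nonnegative for $m\geq n+1$. The indicial roots of the operator are $0$ and $n+1$, so powers $t^m$ with $m>n+1$ are \emph{subsolutions}, and the comparison step fails precisely at and beyond the resonance, which is the range you need to iterate through. (The paper's own barrier in Lemma \ref{lem-tan-smooth} is deliberately restricted to $b\,t^2$, and \emph{all} higher orders are extracted through the integral formula, not the maximum principle.) Second, your identification of $\mathbf{c}_{n+1,0}$ is circular: the limit formula is justified via ``the pointwise estimate above,'' but that estimate is for $\mathbf{v}=\mathbf{u}-\mathbf{T}_{k+1}$, and $\mathbf{T}_{n+1}$ already contains $\mathbf{c}_{n+1,0}\,t^{n+1}$. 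If one drops the $j=0$ term from $\mathbf{T}_{n+1}$ before comparing, the barrier at best yields $O(t^{n+1})$ without the gain $\epsilon$, which is not enough to show the limit defining $\mathbf{c}_{n+1,0}$ exists, let alone that it is smooth in $y'$. The integral representation \eqref{eq-log-int} is exactly what lets the paper read off this free mode without any limiting argument, and without that (or an equivalent device, e.g.\ multiplying by an integrating factor and integrating twice in $t$) the resonant step of your proposal does not close.
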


\begin{proof}
    We split the proof into two key steps.

    First, we prove that if $\mathbf{u}$ admits an expansion of order $t^k$ in $G_r$, then the nonlinear forcing term defined by
    \begin{align}\label{eq-rem-1}
        F_s(y, \mathbf{u}):=t^2(u_s)_{,tt}- t^2 g^{ij}(u_{s})_{,ij}
    \end{align}
    (cf. equation \eqref{eq-u-form}) admits an expansion of order $t^{k+1}$ in $G_r$.

    By the inductive hypothesis, $\mathbf{u}$ satisfies the decomposition \eqref{eq-Taylor} with remainder $\mathbf{R}_k$ of order $t^{k+\epsilon}$. Substituting this expansion into \eqref{eq-rem-1}, we observe that the remainder contribution $\mathbf{R}_k$ acquires an additional factor of $t$ after differentiation and multiplication by $t^2$. This claim follows directly from formal differentiation and substitution; for a detailed computation, we refer the reader to the proof of Lemma 5.2 in \cite{JiangShi}.

    Next, we show that if $F_s$ admits an expansion of order $t^{k+1}$, then so does $u_s$. The key tool is the integral representation of the solution to the ODE derived from \eqref{eq-g-inv2}. For $t\in (0, r)$, we have
    \begin{align}
        u_s(y', t) = &\left[ u_s(y', r) r^{-\overline{m}} - \frac{r^{\underline{m} - \overline{m}}}{\overline{m} - \underline{m}} \int_0^r \zeta^{-1 - \underline{m}} F_s(y', \mathbf{u}(y', \zeta)) \, d\zeta \right] t^{\overline{m}}
        \nonumber\\
        &\qquad + \frac{t^{\underline{m}}}{\overline{m} - \underline{m}} \int_0^t \zeta^{-1 - \underline{m}} F_s(y', \mathbf{u}(y', \zeta)) \, d\zeta
        \nonumber\\
        &\qquad + \frac{t^{\overline{m}}}{\overline{m} - \underline{m}} \int_t^r \zeta^{-1 - \overline{m}} F_s(y', \mathbf{u}(y', \zeta)) \, d\zeta,\label{eq-log-int}
    \end{align}
    where the exponents are given by $\overline{m} = n+1$ and $\underline{m} =0$.

    A critical observation is that the $t^{\overline{m}}$ term in the expansion of $F_s(y,\mathbf{u})$ generates the first logarithmic correction term $t^{\overline{m}}\log t$ in $u_s$ via the integral terms in \eqref{eq-log-int}. Note that the regularity exponent $\epsilon$ in the remainder estimate \eqref{eq-Rk-Cke} may decrease when $k+1 \geq \overline{m}$, due to the introduction of logarithmic terms. For full details of this integral analysis, we refer to Section 4 in \cite{HanJiang2023}.
\end{proof}

From Lemma \ref{lem-Taylor-u} and the proof strategy of Theorem 5.3 in \cite{HanJiang2023}, we deduce Theorem \ref{thm-BRT1}. Theorems \ref{thm-Cna} and \ref{thm-BRT2} are finite-regularity analogues of Theorem \ref{thm-BRT1}; their proofs follow the same line of reasoning as that of Theorem 5.2 in \cite{HanJiang2023}.

\section{Convergence theorems}\label{sec-Conv}
For simplicity, we set $t=y^n$. Let $\mathbf{w} = (w_1,\cdots, w_{\fm-n})$ be an arbitrary $(\fm-n)$-valued function defined on $G_R$, and denote $\mathcal{Q}[\mathbf{w}]$ as specified in \eqref{eq-Q}.

For the solution $\mathbf{u}$ of the minimal surface system, we introduce the auxiliary function
\begin{align}\label{eq-v}
    \mathbf{v} = \mathbf{u} - \varphi - \mathbf{c}_2 t^2.
\end{align}
By the argument presented in the proof of Theorem \ref{lem-Form-Comp}, we have the identity
\begin{align}
     \mathcal{Q}\bigl[\varphi + \mathbf{c}_2 t^2\bigr] = t^2 G_1(y),
\end{align}
where $G_1(y)$ is analytic provided that $\varphi$ is analytic.

Combining the minimal surface system \eqref{eq-main} and the derivative formula \eqref{eq-derivative}, we find that the component functions $v_s$ of $\mathbf{v}$ satisfy the following PDE for $s=1,\cdots, \fm-n$:
\begin{align}\label{eq-v}
    g^{ij}[\mathbf{u}]v_{s,ij} - \frac{n}{t}  v_{s,t} = -t^2 G_1(y)+   \bigl(g^{ij}\bigl[\varphi +\mathbf{c}_2 t^2\bigr] -g^{ij}[\mathbf{u}]\bigr)\bigl(\varphi_{s} + c_{2,s}t^2\bigr)_{ij},
\end{align}
where the metric difference term admits the integral representation
\begin{align}
    g^{ij}\bigl[\varphi +\mathbf{c}_2 t^2\bigr] -g^{ij}[\mathbf{u}] &= \int_0^1 \frac{\partial}{\partial \zeta} g^{ij}\bigl[\varphi +\mathbf{c}_2 t^2+ \zeta \mathbf{v}\bigr] d\zeta\\
    &=\left(\int_0^1  g^{il}\bigl[\varphi +\mathbf{c}_2 t^2+ \zeta \mathbf{v}\bigr]\cdot g^{mj}\bigl[\varphi +\mathbf{c}_2 t^2+ \zeta \mathbf{v}\bigr] d\zeta \right) \\
    &\qquad \cdot \left(\mathbf{v}_l \cdot \bigl(\varphi +\mathbf{c}_2 t^2\bigr)_m +  \mathbf{v}_m \cdot \bigl(\varphi +\mathbf{c}_2 t^2\bigr)_l    \right).
\end{align}
This metric difference term is smooth with respect to the derivatives of $\mathbf{v}$ (i.e., smooth in $D\mathbf{v}$).

Dividing both sides of \eqref{eq-v} by $g^{nn}=g^{nn}[\mathbf{u}]$, we rewrite the equation as
\begin{align}\label{eq-v-1}
    v_{s, tt} - \frac{n}{t}v_{s, t} = \bigl(g^{nn}\bigr)^{-1}\left(\bigl(g^{nn}v_{s,tt} - g^{ij}v_{s, ij}\bigr) - \frac{n\bigl(g^{nn}-1\bigr)}{t} v_{s,t}-  G_2\right),
\end{align}
where $G_2$ denotes the collection of all terms on the right-hand side of \eqref{eq-v}. After rearrangement, the expression
\begin{align}
g^{nn}v_{s,tt} - g^{ij}v_{s, ij}
\end{align}
contains only mixed and tangential second-order derivatives of $v_s$. Let $(g_{\alpha\beta})$ with $\alpha,\beta = 1,\cdots, n-1$ denote the leading $(n-1)\times (n-1)$ principal submatrix of the metric tensor $(g_{ij})$. We compute the identity
\begin{align}
    g^{nn}-1 = \frac{\det(g_{\alpha\beta} ) - \det (g_{ij})}{\det (g_{ij})} =\frac{\sum_{\beta=1}^{n-1}g_{n \beta} \bigl(g^*\bigr)^{\beta n} }{\det (g_{ij})},
\end{align}
where $g^*$ denotes the cofactor matrix of $g=(g_{ij})$, and each entry $\bigl(g^*\bigr)^{\beta n}$ contains a factor of the form $\mathbf{v}_t + 2\mathbf{c}_2 t$. We deduce that every term in $g^{nn}-1$ involves at least two factors of $\mathbf{v}_t + 2\mathbf{c}_2 t$. Similarly, for each $\beta=1, \cdots, n-1$, every term in $g^{\beta n}$ contains at least one factor of $\mathbf{v}_t + 2\mathbf{c}_2 t$. Based on these observations, we can recast \eqref{eq-v-1} into the form of a Fuchsian system:
\begin{align}\label{eq-Fus}
    t \mathbf{w}_t + A\mathbf{w} = t F\bigl(t, y', \mathbf{w}, D_{y'}\mathbf{w}\bigr),
\end{align}
where $A$ is a constant matrix, and the vector $\mathbf{w}$ is defined by
\begin{align}
    \mathbf{w} = \left(\frac{\mathbf{v}}{t}, \frac{\mathbf{v}_t}{t},\frac{ D_{y'}\mathbf{v}}{t}\right).
\end{align}
Combining the formal computations presented in the proof of Lemma \ref{lem-Form-Comp} with the analytical techniques developed in Kichenassamy \cite{K2}, Kichenassamy-Littman \cite{KL1}, and Kichenassamy-Littman \cite{KL2}, we conclude that for any analytic $(\fm-n)$-valued functions $\varphi$ and $\mathbf{c}_{n+1}$, the Fuchsian system \eqref{eq-Fus} admits a unique series solution that converges locally in a neighborhood of the origin. For further details on the convergence analysis, we refer the reader to Kichenassamy \cite{K1} and Han-Jiang \cite{HanJiang1}. We summarize these findings in the following lemma.

\begin{lemma}\label{lem-Fuschian}
    Let $\varphi$ and $\mathbf{c}_{n+1}$ be arbitrary analytic $(\fm -n)$-valued functions defined on $B'_R$. Then there exists a constant $r\in (0, R)$ such that the series \eqref{eq-expan} converges uniformly and absolutely on $\overline{B^+_r}$, with the limit function being a real-valued solution $\mathbf{u}$ to the minimal surface system \eqref{eq-main} in $B^+_r$.
\end{lemma}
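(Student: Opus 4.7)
The plan is to combine the formal construction from Lemma \ref{lem-Form-Comp} with the Fuchsian reformulation already sketched just before the statement. The formal series \eqref{eq-expan} is produced term by term, with the critical index $i=n+1$ being the only obstruction to pure power-series expansion; there the recursion leaves $\mathbf{c}_{n+1,0}$ free, and all higher coefficients $\mathbf{c}_{i,j}$ are polynomial expressions in $\varphi$, $\mathbf{c}_{n+1,0}$ and their $y'$-derivatives together with explicit functions of $t$. Since $\varphi$ and $\mathbf{c}_{n+1,0}$ are analytic on $B'_R$, the first step is to verify that each $\mathbf{c}_{i,j}$ is analytic on $B'_R$ and that their majorants can be controlled uniformly, so that truncating at some order $k_0 \geq n+2$ produces an analytic partial sum $\mathbf{T}_{k_0}(y', t, t\log t)$ in the three variables $y'$, $t$ and $\tau := t\log t$ on a common neighborhood of $\{(y', 0, 0) : |y'| < R'\}$ for some $R' \in (0, R)$.

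Next, I would pick $k_0$ large enough (for instance $k_0 = n+2$ suffices, but any larger choice works) and set $\tilde{\mathbf v} := \mathbf u - \mathbf T_{k_0}$. By the formal computation in Lemma \ref{lem-Form-Comp}, $\mathcal Q[\mathbf T_{k_0}]$ vanishes to an arbitrarily high order in $t$ with coefficients analytic in $(y', \tau)$, and expanding $\mathcal Q[\mathbf T_{k_0} + \tilde{\mathbf v}] = 0$ around $\mathbf T_{k_0}$ yields a quasilinear equation for $\tilde{\mathbf v}$ of the same type as \eqref{eq-v-1}, but with source term vanishing to high order in $t$. Introducing the rescaled unknown
\begin{align*}
\mathbf w = \bigl( t^{-1}\tilde{\mathbf v},\, t^{-1}\tilde{\mathbf v}_t,\, t^{-1} D_{y'}\tilde{\mathbf v}\bigr),
\end{align*}
this becomes a Fuchsian system exactly of the form \eqref{eq-Fus}, namely
\begin{align*}
t\mathbf w_t + A\mathbf w = tF(t, y', \mathbf w, D_{y'}\mathbf w),
\end{align*}
where $A$ is a constant matrix whose spectrum is explicitly determined by the indicial equation $i(i-(n+1)) = 0$, and $F$ is analytic in all its arguments on a neighborhood of the origin.

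Once the system is in Fuchsian form with analytic $F$, the third step is a direct invocation of the Kichenassamy--Littman existence and convergence theorems \cite{K1, K2, KL1, KL2}, together with the variant worked out in the hypersurface setting in \cite{HanJiang1}. These results guarantee that, provided the indicial spectrum is non-negative (which is the case here, since the eigenvalues of $A$ are $0$ and $n+1$) and provided a compatible formal solution exists (which we have, by Lemma \ref{lem-Form-Comp}), there is a unique solution $\mathbf w$ analytic in $(y', t, t\log t)$ on some neighborhood $\{|y'| \leq r,\, 0 \leq t \leq r\}$ with $r \in (0, R')$. Undoing the substitution reconstructs $\tilde{\mathbf v}$ and hence $\mathbf u = \mathbf T_{k_0} + \tilde{\mathbf v}$ as a convergent sum of the form \eqref{eq-expan} on $\overline{B^+_r}$, and by the uniqueness part of Lemma \ref{lem-Form-Comp} the convergent sum must coincide term-by-term with the formal series produced from the same $\varphi$ and $\mathbf{c}_{n+1,0}$.

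The principal obstacle will be the resonance at $i = n+1$: the eigenvalue $n+1$ of $A$ is a positive integer, and the generic Kichenassamy--Littman setup assumes eigenvalues avoid positive integers in order to rule out logarithmic branches. The remedy, which is the technical heart of the argument, is to perform the Fuchsian reduction not in the variable $t$ alone but in the pair $(t, \tau)$ with $\tau = t\log t$; this is exactly the device used in \cite{KL1, KL2} and adapted in \cite{HanJiang1}, and it absorbs the resonance by allowing a linear $\log t$-dependence in the leading logarithmic coefficient $\mathbf{c}_{n+1,1}$. After this reduction the operator becomes Fuchsian with non-resonant indicial data in the enlarged variable, and the majorant argument goes through. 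A secondary but routine check is that the nonlinear remainder $F$ in \eqref{eq-Fus} depends only on $D_{y'}\mathbf w$ (not on $\mathbf w_t$ directly), which is ensured by the structure of $g^{nn}-1$ and $g^{\beta n}$ as products of at least one or two factors of $\mathbf v_t + 2\mathbf c_2 t$, as noted just before \eqref{eq-Fus}; this quasilinear form is precisely what the Kichenassamy--Littman framework requires.
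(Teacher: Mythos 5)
Your proposal is correct and follows essentially the same route as the paper: truncate the formal series from Lemma \ref{lem-Form-Comp}, recast the equation for the remainder as the Fuchsian system \eqref{eq-Fus}, and invoke the Kichenassamy--Littman theory (with the $(t, t\log t)$ variable-doubling to handle the integer resonance at $i=n+1$) to obtain a convergent solution, which by uniqueness of the formal expansion must reproduce the series \eqref{eq-expan}. The only cosmetic difference is that the paper subtracts only the low-order polynomial $\varphi + \mathbf{c}_2 t^2$ (which already suffices to make $\mathbf{w}$ vanish at $t=0$), whereas you propose a higher-order truncation $\mathbf{T}_{k_0}$ with $k_0\geq n+2$; both are valid, and your more detailed treatment of the resonance and of the structural condition that $F$ depends only on $D_{y'}\mathbf{w}$ is a useful elaboration of what the paper leaves to the cited references.
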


To establish Theorem \ref{thm-Conv-1}, it remains to verify the analyticity of the vector-valued function $\mathbf{c}_{n+1}$. We recast equation \eqref{eq-g-inv3} in the following form for each $s=1,\cdots, \fm -n$:
\begin{align}\label{eq-v-11}
    A_{ij}v_{s,ij} - n \frac{v_{s,t}}{t}+N=0,
\end{align}
where the coefficients $A_{ij}$ and the nonlinear term $N$ depend on the variables $y'$, $t$, and the first derivatives of $\mathbf{v}$, namely
\begin{align}
    A_{ij} = A_{ij}(y',t, D\mathbf v),\quad N = N(y', t, D \mathbf v).
\end{align}

For the solution $\mathbf{v}$, we impose the following conditions: there exists a positive constant $C_0$ such that
\begin{align}
\label{eq-QuasiCondition}
|\mathbf v|\leq C_0t^2,\quad |D\mathbf v|\leq C_0t,\quad |D^2\mathbf v|\leq C_0\quad\text{in }B^+_R.
\end{align}

We further assume that equation \eqref{eq-v-11} is uniformly elliptic: there exists a positive constant $\lambda$ such that for all $\xi\in\mathbb R^n$, $y\in B^+_R$, and $|D\mathbf v|\leq C_0 R$, the following coercivity estimate holds:
\begin{equation}\label{eq-ellipticity} 
\lambda^{-1}|\xi|^2\le A_{ij}(y, D\mathbf v)\xi_i\xi_j\le \lambda|\xi|^2.
\end{equation}

An additional structural condition is imposed: there exists a positive constant $c_0$ such that for all $y\in B^+_R$ and $|D\mathbf v|\leq C_0 R$,
\begin{equation}\label{eq-Negative-Condition}2 A_{nn}(y, D\mathbf v)-2n\leq -c_0.\end{equation}

We also require the coefficients $A_{ij}$ and the nonlinear term $N$ to be analytic functions. For notational convenience, we denote the arguments of $A_{ij}$ and $N$ by the pair $(y,z)\in B^+_R\times\mathbb R^{n(\fm-n)}$, where $z$ represents the derivative variables of $\mathbf{v}$. We assume that there exist positive constants $A_0$ and $A$ such that for all nonnegative integers $k,l$, all $y\in G_R$, and all $|z| \leq C_0R$, the following derivative bounds hold:
\begin{equation}\label{eq-QuasiLinearAnalyticity-Assum1}
|D^{k+l}_{(y, z)}A_{ij}|+|D^{k+l}_{(y, z)}N|\le A_0A^{k+l}(k-2)!(l-2)!.
\end{equation}
Here and hereafter, we adopt the convention that $m!=1$ for any integer $m\le 0$.

For any $k$-valued vector function $\mathbf{w}$, we define its point-wise supremum norm by
\begin{align}
    |\mathbf w(x)| = \max\{|w_1(x)|,\cdots, |w_k(x)|\}.
\end{align}

\begin{theorem}\label{thm-Tan-Ana}
    Suppose that the auxiliary function $\mathbf{v}$ defined by \eqref{eq-v} satisfies the PDE system \eqref{eq-v-11} in $G_R$ as well as the assumptions \eqref{eq-QuasiCondition}--\eqref{eq-QuasiLinearAnalyticity-Assum1}. 
    Then for any $r\in (0, R/2)$, there exist positive constants $D, B>0$ such that for all $(y',t) \in B^+_r$ and all nonnegative integers $l\geq 0$, the following a priori estimates hold:
    \begin{align}\label{eq-Tan-Ana-1}
        \bigl| D_{y'}^l  \mathbf{v}(y',t)\bigr|& \leq DB^{l-1}(l-1)!\bigl(r - |y'|\bigr)^{-(l-1)^+} t^2,\\\label{eq-Tan-Ana-2}
        \bigl| D D_{y'}^l \mathbf{v}(y',t) \bigr| &\leq DB^{l-1}(l-1)!\bigl(r - |y'|\bigr)^{-(l-1)^+} t,\\\label{eq-Tan-Ana-3}
        \bigl|D^2 D_{y'}^l \mathbf{v}(y',t)\bigr| &\leq DB^{l-1}(l-1)!\bigl(r - |y'|\bigr)^{-(l-1)^+}.
    \end{align}
\end{theorem}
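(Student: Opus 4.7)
The plan is to prove \eqref{eq-Tan-Ana-1}--\eqref{eq-Tan-Ana-3} by induction on $l$, following the standard nested-weight framework for establishing tangential analyticity for Fuchsian-type equations, in the spirit of Kichenassamy--Littman \cite{KL1, KL2} and Han--Jiang \cite{HanJiang1}, with the weight $(r-|y'|)^{-(l-1)^+}$ playing the role of the distance-to-boundary factor. The constants $D$ and $B$ are to be fixed at the end: $D$ is chosen first, depending only on $C_0$, $A_0$, $A$, $\lambda$, $c_0$, and $n$; then $B$ is chosen large enough, depending on these quantities together with $r$, to close the inductive step. For the base cases $l = 0$ and $l = 1$, the estimates reduce immediately to \eqref{eq-QuasiCondition} provided $D \geq C_0$.

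For the inductive step, I would apply $D_{y'}^l$ to \eqref{eq-v-11}. Using the Leibniz rule and the Fa\`a di Bruno formula for the composite dependencies $A_{ij}(y, D\mathbf{v})$ and $N(y, D\mathbf{v})$, I obtain, for each $s = 1, \dots, \fm-n$, an equation of the form
\begin{align}
A_{ij}(y, D\mathbf{v})\,\partial_{ij}(D_{y'}^l v_s) - \frac{n}{t}\,\partial_t(D_{y'}^l v_s) = F_{s, l}(y', t),
\end{align}
where the forcing $F_{s, l}$ collects commutators of $D_{y'}^l$ with $A_{ij}\partial_{ij}$ and with $N$, involving only strictly lower-order tangential derivatives of $\mathbf{v}$. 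Combining the inductive hypothesis \eqref{eq-Tan-Ana-1}--\eqref{eq-Tan-Ana-3} at orders $\leq l - 1$ with the coefficient analyticity bounds \eqref{eq-QuasiLinearAnalyticity-Assum1}, and using the standard Cauchy-product estimate on the weights $(r - |y'|)^{-(j-1)^+}$, one obtains a pointwise bound
\begin{align}
|F_{s,l}(y', t)| \leq C_\ast D B^{l-1}(l-1)!\,(r - |y'|)^{-(l-1)^+}
\end{align}
with $C_\ast$ independent of $B$. Then, using the negativity condition \eqref{eq-Negative-Condition} together with the uniform ellipticity \eqref{eq-ellipticity}, I construct a barrier of the form $\psi = K_\ast D B^{l-1}(l-1)!\,(r - |y'|)^{-(l-1)^+} t^2$ satisfying $A_{ij}\psi_{ij} - \tfrac{n}{t}\psi_t \leq -|F_{s, l}|$ on the slab $\{|y'| < r'\} \cap \{0 < t < r\}$ for each $r' < r$, so that $\psi \pm D_{y'}^l v_s$ are nonnegative supersolutions; the maximum principle, applied on such nested slabs and letting $r' \uparrow r$, yields \eqref{eq-Tan-Ana-1} with constant $DB^{l-1}(l-1)!$ provided $B$ is chosen to dominate $C_\ast K_\ast$. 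Finally, \eqref{eq-Tan-Ana-2} and \eqref{eq-Tan-Ana-3} follow from rescaled interior Schauder estimates on hyperbolic balls $B_h(P, \tfrac{1}{2})$, exactly as in Lemma \ref{lem-tan-smooth}.

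The main obstacle is the combinatorial closure of the induction with a single uniform constant $B$. The Fa\`a di Bruno expansion of $D_{y'}^l [A_{ij}(y, D\mathbf{v})]$ produces a sum over partitions of $l$ whose number of summands grows with $l$, each carrying factorials from \eqref{eq-QuasiLinearAnalyticity-Assum1} and from the inductive hypothesis applied to the derivatives $D\mathbf{v}$, as well as its own boundary weight. Showing that the total contribution is bounded by $C_\ast (l-1)!\,(r-|y'|)^{-(l-1)^+}$ with $C_\ast$ independent of $B$ requires the sharp combinatorial identities of the form $\sum_{j=0}^{l}\binom{l}{j}(j-1)!\,(l-j-1)! \leq C\,(l-1)!$ (with suitable conventions at the boundary values $j = 0, l$) together with the weighted convolution bound $\sum_{j}(r-|y'|)^{-(j-1)^+}(r-|y'|)^{-(l-j-1)^+} \leq C(r-|y'|)^{-(l-1)^+}$. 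Another delicate point is that the $t^2$, $t$, $1$ scalings on the right-hand sides of \eqref{eq-Tan-Ana-1}--\eqref{eq-Tan-Ana-3} must be preserved across the commutators; here the $t$-factors are provided by the Fuchsian structure and by the fact that $\mathbf{v}$ vanishes to order $t^2$, which is crucial for absorbing the singular coefficient $\tfrac{n}{t}$ into the right-hand side when inverting the Fuchsian ODE.
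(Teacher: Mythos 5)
There is a genuine gap in two places.

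First, the proposed barrier $\psi = K_\ast D B^{l-1}(l-1)!\,(r - |y'|)^{-(l-1)^+} t^2$, applied globally on a slab $\{|y'|<r'\}\times(0,r)$, will not produce the sign needed by the maximum principle. Differentiating the built-in weight twice in $y'$ gives $\partial_{\beta\gamma}\psi \sim (l-1)^+\bigl((l-1)^++1\bigr)(r-|y'|)^{-(l-1)^+-2}t^2$, so $A_{\beta\gamma}\psi_{\beta\gamma}$ is a large positive term two orders more singular in $(r-|y'|)$ than the desired output. This is only controllable where $t \lesssim (r-|y'|)/l$; outside that region the barrier inequality $L\psi \leq -|F_{s,l}|$ fails. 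The paper resolves this by introducing the cone decomposition $T_p = \{t < p^{-1}(r-|y'|)\}$ and, inside $T_p$, replacing your global weighted barrier with a \emph{localized, unweighted} barrier $w = M(\varepsilon|y'-y_0'|^2 + t^2)$ on a small cylinder $G_\rho(\tilde y_0)$ with $\rho = p^{-1}(r-|y_0'|)$, the weight being carried by the constant $M$. Outside $T_p$ (where $r-|y'| \leq pt$), no barrier is used at all; instead the lower-order induction hypothesis for $D D_{y'}^{p-1}\mathbf v$ and $D^2 D_{y'}^{p-1}\mathbf v$ directly yields the required bounds after trading one power of $(r-|y'|)$ for $pt$.

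Second, and more seriously, the claim that ``\eqref{eq-Tan-Ana-2} and \eqref{eq-Tan-Ana-3} follow from rescaled interior Schauder estimates'' is only correct inside the cone $T_p$, where the scaling radius $\rho = t_0$ is commensurate with $(r-|y_0'|)/p$. In $G_r\setminus T_p$, proving the second-derivative bound \eqref{eq-Tan-Ana-3} at order $p$ is the genuine obstacle and your sketch does not address it. The paper's Step 4 shows this requires applying $D_{y'}^{p+1}$ (one more tangential derivative than the order being estimated), a scaled $C^{1,\alpha}$ estimate on a ball of radius $\rho = (r-|y_0'|)/(2p)$, isolating the highest-order term $D^2 D_{y'}^p\mathbf v$ on the right-hand side of $N_{p+1}$, and then closing a self-referential inequality $h(\eta) \leq \varepsilon\, h\bigl(\eta + p^{-1}(r-\eta)\bigr) + M(r-\eta)^{-p+1}$ via the iteration lemma of Friedman (Lemma \ref{lemma-LinearAnalyticity-Iteration}). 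This step is also where the radius $r$ must be fixed small and then recovered by covering. Without the cone decomposition, the one-extra-derivative trick, and the iteration lemma, the induction does not close for \eqref{eq-Tan-Ana-3} away from the boundary cone, and hence the whole scheme collapses at the next order.
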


For the base case $l=0$, the estimates reduce precisely to the quasi-homogeneous growth conditions \eqref{eq-QuasiCondition}. Although the proof is rather lengthy, it follows a line of reasoning that is closely analogous to the main approach developed in \cite{HanJiang1}. The key distinction lies in the fact that our argument is tailored to the system of minimal surface equations, whereas the original result in \cite{HanJiang1} addresses a single PDE. For the reader's convenience, we present the complete proof with all details included below.
\begin{proof}

    Because of the interior analyticity of solution $\mathbf u$ to the system of minimal surface equations, we assume that for fixed $r_0\in [r, R)$, \eqref{eq-Tan-Ana-1}-\eqref{eq-Tan-Ana-3} hold in
    \begin{align}\label{eq-Region1}
        \left\{(y', t)\in G_{r_0}: t> \frac{r}{2}\right\},
    \end{align}
    for some constants $D, B.$

    For each positive $l$, we set
    \begin{align}
        T_l = \left\{(y', t)\in G_r: t<\frac{1}{l}(r-|y'|)\right\}.
    \end{align}
Hence, $T_l$ is a circular cone and shrinks while $l$ increases. In this way, we decompose $G_r$
into two parts $T_l$ and $G_r \,/\, T_l$.

We prove \eqref{eq-Tan-Ana-1}-\eqref{eq-Tan-Ana-3} by induction. Theorem \ref{thm-BRT1} and Lemma \ref{lem-tan-smooth} imply \eqref{eq-Tan-Ana-1}-\eqref{eq-Tan-Ana-3} for $l=1.$ 
Assume $p\geq 2$ and assume \eqref{eq-Tan-Ana-1}-\eqref{eq-Tan-Ana-3} hold for all $l<p.$

\smallskip
{\it Step 1.} We prove \eqref{eq-Tan-Ana-1} for $l=p$ in $G_r$. We consider 
the cases $T_p$ and $G_r\setminus T_p$ separately. 

We first take an $y_0=(y_0', t_0)\in T_p$. Set 
\begin{align}
\rho=\frac1p(r-|y_0'|),    
\end{align} 
and 
\begin{align}
    G_\rho(\tilde y_0)=\{(y',t):\, |y'-y_0'|<\rho,\, t\in (0,\rho)\},
\end{align}
where $\tilde y_0=(y_0',0)$. The definition of $T_p$ implies $t_0<\rho$. 
Next, we take any $(y',t)\in G_\rho(\tilde y_0)$. Then, 
\begin{align}
(r-|y_0'|)-(r-|y'|)=|y'|-|y_0'|\le |y'-y_0'|<\rho=\frac1p(r-|y'_0|).    
\end{align}
Hence, 
\begin{equation}\label{eq-LinearAnalyticity-Relation}r-|y_0'|<\frac{p}{p-1}(r-|y'|).\end{equation}
A similar argument yields 
\begin{equation}\label{eq-LinearAnalyticity-Relation2}r-|y'|<\frac{p+1}{p}(r-|y_0'|).\end{equation}
With $t<\rho$ in $G_\rho(\tilde y_0)$, we have 
\begin{align}t<\rho=\frac{1}{p}(r-|y_0'|)<\frac{1}{p-1}(r-|y'|).\end{align}
This implies 
\begin{equation}\label{eq-LinearAnalyticity-Inclusion}G_\rho(\tilde y_0)\subset T_{p-1}.\end{equation}

Consider, for some positive constant $\varepsilon$  to be determined, 
\begin{align}{w}(y',t)=M(\varepsilon|y-y_0'|^2+t^2).\end{align}
Setting
\begin{align}
    L w = A_{ij}(y, D\mathbf v(y))w_{ij} - n \frac{w_{t}}{t},
\end{align}
one has
\begin{align}
L w=M\Big(2 A_{nn}-2n + 2\varepsilon\sum_{\beta=1}^{n-1}A_{\beta\beta}\Big).
\label{TS72}
\end{align}
By \eqref{eq-Negative-Condition} and taking $\varepsilon$ small, we have 
\begin{align}Lw\le -\frac12Mc_0.\end{align}
For simplicity, we assume $c_0\in (0,1]$. 
Next, the definition of $w$ implies  
\begin{align} 
w&\ge M\varepsilon \rho^2\quad\text{on }\partial B_\rho'(y_0')\times (0,\rho),\\
w&\ge M\rho^2\quad\text{on }B_\rho'(y_0')\times\{\rho\}.\end{align}
By the induction hypotheses 
\eqref{eq-Tan-Ana-2} for $l=p-1$, we have 
\begin{align}|D_{y'}^p \mathbf v(x)|\le B_0B^{p-2}(p-2)!t(r-|x'|)^{-p+2}.\end{align}
Note that \eqref{eq-LinearAnalyticity-Relation} implies, for $(y',t)\in G_\rho(\tilde y_0)$,  
\begin{align}(r-|y'|)^{-p+2}<\Big(\frac{p-1}{p}\Big)^{-p+2}(r-|y_0'|)^{-p+2}
\le c_1(r-|y_0'|)^{-p+2},\end{align}
where $c_1$ is a positive constant independent of $p$. Hence by the definition of $\rho$, we get, 
for any $(y',t)\in G_\rho(\tilde y_0)$, 
\begin{align} |D_{y'}^p\mathbf v(x)|&\le c_1B_0B^{p-2}(p-2)!\rho(r-|y_0'|)^{-p+2}\\
&= c_1B_0B^{p-2}(p-1)!\rho^2(r-|y_0'|)^{-p+1}.\end{align}
In order to have $w\ge |D_{y'}^p u_s|$ on $\partial G_\rho(\tilde y_0)$, we need to choose, 
by renaming $c_1$,  
\begin{equation}\label{eq-LinearAnalyticity-ChoiceM1}
M\ge c_1B_0B^{p-2}(p-1)!(r-|y_0'|)^{-p+1}.\end{equation}

By applying $D_{y'}^l$ to \eqref{eq-v-11}, we obtain 
\begin{align}\label{TS91}
L(D_{y'}^lv_s)+N_l=0,
\end{align}
where $N_l$ is given by 
\begin{align}
\label{eq-KeyExpressions2}N_l=
\sum_{m=0}^{l-1}
\left(\begin{matrix}l\\m\end{matrix}\right)\left(D_{y'}^{l-m}A_{ij}\cdot D_{y'}^mv_{s,ij}\right)
+D_{y'}^lN.    
\end{align}
Derivatives of $A_{ij}$ and $N$ also result in derivatives of 
$\mathbf u$.  
We claim that, by taking $B$ sufficiently large depending only on 
$A_0$, $B_0$ and $A$, we have, for any $(y',t)\in G_\rho(\tilde y_0)$,
\begin{equation}\label{eq-LinearAnalyticity-Estimate-f}
|N_p(y',t)|
\le C_1 B_0B^{p-2}(p-1)!(r-|y_0'|)^{-p+1},
\end{equation}
where $C_1$ is a positive constant depending only on $A_0$, $B_0$ and $A$. By renaming 
$C_1$, we may require $C_1\ge c_1$, for $c_1$ in \eqref{eq-LinearAnalyticity-ChoiceM1}, 
and $C_1\ge 2c_0^{-1}$, for $c_0$ as in \eqref{eq-Negative-Condition}. 
Set 
\begin{equation}\label{eq-LinearAnalyticity-ChoiceM2}
M= C_1 B_0B^{p-2}(p-1)!(r-|y_0'|)^{-p+1}.
\end{equation}
Therefore, we obtain 
\begin{align}
L(\pm D_{y'}^pv_s)&\ge Lw\quad\text{in }G_\rho(\tilde y_0),\\
\pm D_{y'}^pv_s&\le w\quad\text{on }\partial G_\rho(\tilde y_0).   
\end{align}
By the maximum principle, we have
\begin{align}
|D_{y'}^pv_s|\le w\quad\text{in }G_\rho(\tilde y_0).    
\end{align}
By taking $y'=y'_0$, we obtain, for any $(y_0',t)\in G_\rho(\tilde y_0)$, 
\begin{align}|D_{y'}^pv_s(y_0',t)|\le Mt^2.\end{align}
In conclusion, by \eqref{eq-LinearAnalyticity-ChoiceM2},
we obtain, 
for any $(y',t)\in T_p$, 
\begin{align}|D^p_{y'} v_s(y',t)|\leq C_1 B_0B^{p-2}(p-1)!t^2(r-|y'|)^{-p+1}.\end{align}

We now prove \eqref{eq-LinearAnalyticity-Estimate-f}. In view of \eqref{eq-KeyExpressions2} 
with $l=p$, we first estimate $D_{y'}^pN$. 
For any $k=1, \cdots, p$, by taking $l=k-1<p$ in the induction hypothesis 
\eqref{eq-Tan-Ana-2} and \eqref{eq-Tan-Ana-3}, we have 
\begin{align}
\left|D_{y'}^k\frac{\mathbf v}{t}\right|&\le\frac1t\left|D_{y'}^{k-1}D \mathbf v\right|
\le B_0B^{(k-2)^+}(k-2)!(r-|y'|)^{-(k-2)^+},\\
|D_{y'}^kD \mathbf v|&\le|D_{y'}^{k-1}D^2 \mathbf v|\le B_0B^{(k-2)^+}(k-2)!(r-|y'|)^{-(k-2)^+}.
\end{align}
By Lemma A.1 and Remark A.2 in \cite{HanJiang1}, we obtain 
\begin{equation}\label{eq-EstimateN1}|D^p_{y'}N|\le \widetilde B_0B^{p-2}(p-2)!(r-|y'|)^{-(p-2)}.
\end{equation}
Next, we estimate terms involving $A_{ij}$ in \eqref{eq-KeyExpressions2}, i.e., 
\begin{equation}\label{eq-estimate-I}I=\sum_{m=0}^{p-1}
\left(\begin{matrix}p\\m\end{matrix}\right)D_{y'}^{p-m}A_{ij} \partial_{ij}D_{y'}^mv_s.\end{equation}
Similar as \eqref{eq-EstimateN1}, we have, for any $k=0, 1, \cdots, p$, 
\begin{align}|D^k_{y'}A_{ij}|\le \widetilde B_0B^{(k-2)^+}(k-2)!(r-|y'|)^{-(k-2)^+}.\end{align}
In expanding the summation in $I$, we consider $m=0, 1, p-1$ separately. 
By  the induction hypotheses 
\eqref{eq-Tan-Ana-3} for $l<p$, we have 
\begin{align} 
|I|&\le \widetilde B_0B^{p-2}(p-2)!(r-|y'|)^{-p+2}+\widetilde B_0B_0B^{p-3}p(p-3)!(r-|y'|)^{-p+3}\\
&\qquad +\widetilde B_0B_0B^{p-3}(p-1)!(r-|y'|)^{-p+3}
\sum_{m=2}^{p-2}\frac{p}{m(p-m)(p-m-1)}\\
&\qquad+\widetilde B_0B_0B^{p-2}p(p-2)!(r-|y'|)^{-p+2}.\end{align} 
We note that the last term in the right-hand side above has the order $B^{p-2}(p-1)!$. 
A straightforward calculation yields 
\begin{align}|I|\le B_1B_0B^{p-2}(p-1)!(r-|y'|)^{-p+1}.\end{align}
Therefore, we obtain \eqref{eq-LinearAnalyticity-Estimate-f}. 

Next, we take $(y',t)\in G_r\setminus T_p$. By the induction hypotheses 
\eqref{eq-Tan-Ana-2} for $l=p-1$, we have 
\begin{align}|D^p_{y'} v_s(y',t)|\leq B_0B^{p-2}(p-2)!t (r-|y'|)^{-p+2}.\end{align}
Note $r-|y'|\le pt$ in $G_r\setminus T_p$. Then, 
\begin{align}|D^p_{y'} v_s(y',t)|\leq \frac{p}{p-1}B_0B^{p-2}(p-1)!t^2 (r-|y'|)^{-p+1}.\end{align}

By combining the both cases for points in $T_p$ and $G_r\setminus T_p$, 
we obtain, for any $(y', t)\in G_r$, 
\begin{equation}\label{eq-Tan-Ana-1-tau}
|D^p_{y'} v_s(y',t)|\leq C_1 B_0B^{p-2}(p-1)! t^2(r-|y'|)^{-p+1}.\end{equation}
This implies \eqref{eq-Tan-Ana-1} for $l=p$, if $B\ge C_1$. 
The extra factor $B^{-1}$ is for later purposes.

\smallskip

{\it Step 2.} We prove \eqref{eq-Tan-Ana-2} for $l=p$ in $G_r$. Again, we consider 
the cases $T_p$ and $G_r\setminus T_p$ separately.

Take any $y_0=(y_0', t_0)\in T_p$ and set $\rho=t_0$. Then, $B_\rho(y_0)\subset G_r$. 
By a similar argument, \eqref{eq-LinearAnalyticity-Relation}
and \eqref{eq-LinearAnalyticity-Relation2} hold in $B_\rho(y_0)$. Similar to 
\eqref{eq-LinearAnalyticity-Estimate-f}, we have, in $B_\rho(y_0)$,  
\begin{equation}\label{eq-LinearAnalyticity-Estimate-f2}
|N_p|
\le C_1 B_0B^{p-2}(p-1)!(r-|y_0'|)^{-p+1}.
\end{equation}
We now consider \eqref{TS91} in $B_{3\rho/4}(y_0)$ for $l=p$. 
Note 
\begin{align}|A_{ij}|_{L^\infty(B_{3\rho/4}(y_0))}
+2\rho n\left|t^{-1}\right|_{L^\infty(B_{3\rho/4}(y_0))}
\le C.\end{align}
We  fix an arbitrary constant $\alpha\in(0,1)$. 
The scaled $C^{1,\alpha}$-estimate implies 
\begin{align} &\rho^{\alpha}[D_{y'}^pv_s]_{C^\alpha(B_{\rho/2}(y_0))}
+\rho|DD_{y'}^pv_s|_{L^\infty(B_{\rho/2}(y_0))}
+\rho^{1+\alpha}[DD_{y'}^pv_s]_{C^\alpha(B_{\rho/2}(y_0))}\\
&\qquad \le c_2\big(|D_{y'}^pv_s|_{L^\infty(B_{3\rho/4}(y_0))}+\rho^2|N_p|_{L^\infty(B_{3\rho/4}(y_0))}\big).
\end{align}
By \eqref{eq-Tan-Ana-1-tau} and 
\eqref{eq-LinearAnalyticity-Estimate-f2}, we have 
\begin{align}\label{eq-LinearAnalyticity-Estimate-u}\begin{split}
&\rho^{\alpha}[D_{y'}^pv_s]_{C^\alpha(B_{\rho/2}(y_0))}
+\rho|DD_{y'}^pv_s|_{L^\infty(B_{\rho/2}(y_0))}
+\rho^{1+\alpha}[DD_{y'}^pv_s]_{C^\alpha(B_{\rho/2}(y_0))}\\
&\qquad \le C_2 B_0B^{p-2}(p-1)!\rho^2(r-|y_0'|)^{-p+1}.
\end{split}\end{align}
In particular, we get 
\begin{align}|DD_{y'}^pv_s(y_0)|\le C_2  B_0B^{p-2}(p-1)!\rho(r-|y_0'|)^{-p+1}.\end{align}

Next, we take $(y',t)\in G_r\setminus T_p$. By the induction hypotheses 
\eqref{eq-Tan-Ana-3} for $l=p-1$, we have 
\begin{align}|DD^p_{y'} v_s(y',t)|\leq B_0B^{p-2}(p-2)! (r-|y'|)^{-p+2}.\end{align}
Note $r-|y'|\le pt$ in $G_r\setminus T_p$. Then, 
\begin{align}|DD^p_{y'} v_s(y',t)|\leq \frac{p}{p-1}B_0B^{p-2}(p-1)!t (r-|y'|)^{-p+1}.\end{align}

By combining the both cases for points in $T_p$ and $G_r\setminus T_p$, 
we obtain, for any $(y', t)\in G_r$, 
\begin{equation}\label{eq-Tan-Ana-2-tau}
|DD^p_{y'} v_s(y',t)|\leq C_2 B_0B^{p-2}(p-1)! t(r-|y'|)^{-p+1}.\end{equation}
This implies \eqref{eq-Tan-Ana-2} for $l=p$, if $B\ge C_2$. 

{\it Step 3.} We prove \eqref{eq-Tan-Ana-3} in $T_p$ for $l=p$. 

As in Step 2, we take any $y_0=(y_0',t_0)\in T_p$ and set $\rho=t_0$. 
A simple calculation yields
\begin{align}\rho^\alpha[A_{ij}]_{C^\alpha(B_{\rho/2}(y_0))}
+2\rho^{1+\alpha}n[t^{-1}]_{C^\alpha(B_{\rho/2}(y_0))}
\le c_3.\end{align}
We now consider \eqref{TS91} in $B_{\rho/2}(y_0)$ for $l=p$. 
The scaled $C^{2,\alpha}$-estimate implies 
\begin{align}\rho^2|D^2D_{y'}^p v_s(y_0)|\le 
c_3\big\{|D_{y'}^pv_s|_{L^\infty(B_{\rho/2}(y_0))}+\rho^2|N_p|_{L^\infty(B_{\rho/2}(y_0))}
+\rho^{2+\alpha}[N_p]_{C^\alpha(B_{\rho/2}(y_0))}\big\}.\end{align}
By \eqref{eq-Tan-Ana-1-tau} and 
\eqref{eq-LinearAnalyticity-Estimate-f2}, we have 
\begin{align}|D^2D_{y'}^pv_s(y_0)|\le 
C_3 B_0B^{p-2}(p-1)!(r-|y_0'|)^{-p+1}
+c_3\rho^{\alpha}[N_p]_{C^\alpha(B_{\rho/2}(y_0))}.\end{align}
We claim 
\begin{equation}\label{eq-LinearAnalyticity-Estimate-f3}
\rho^{\alpha}[N_p]_{C^\alpha(B_{\rho/2}(y_0))}\le C_3 B_0B^{p-2}(p-1)!(r-|y_0'|)^{-p+1}.\end{equation}
Hence, 
\begin{align}|D^2D_{y'}^pv_s(y_0)|\le 
C_3 B_0B^{p-2}(p-1)!(r-|y_0'|)^{-p+1}.\end{align}
By taking $B\ge C_3$, we obtain, for any $(y',t)\in T_p$, 
\begin{align}|D^2D_{y'}^pv_s(y',t)|\le B_0B^{p-1}(p-1)!(r-|y_0'|)^{-p+1}.\end{align} 
This is \eqref{eq-Tan-Ana-3} for $l=p$ in $T_p$. 

We now  prove \eqref{eq-LinearAnalyticity-Estimate-f3}
by examining $N_p$ given by \eqref{eq-KeyExpressions2} for $l=p$. We note 
that $N_p$ consists of two parts. The first part is given by a summation 
and the second part 
by $D_{y'}^pN$. For $D_{y'}^pN$, we have 
\begin{equation}\label{eq-LinearAnalyticity-Estimate-N1}
\rho^{\alpha}[D_{y'}^pN]_{C^\alpha(B_{\rho/2}(y_0))}
\le \widetilde B_0B^{p-2}(p-2)!(r-|y_0'|)^{-(p-2)}.\end{equation}
The proof is similar to that of \eqref{eq-EstimateN1}. We point out that 
Lemma A.1 in \cite{HanJiang1} still holds if the $L^\infty$-norms are 
replaced by $C^\alpha$-norms and the needed estimates of the 
$C^\alpha$ semi-norms of $DD_{y'}^l\mathbf v$ and 
$D_{y'}^l\mathbf v/t$ are provided by \eqref{eq-LinearAnalyticity-Estimate-u}, for $l\le p$. 
Next, we examine the summation part in $N_p$ and discuss $I$ in 
\eqref{eq-estimate-I} for an illustration. Similar to 
\eqref{eq-LinearAnalyticity-Estimate-N1}, we have, for any $l\le p$,  
\begin{align}\rho^{\alpha}[D_{y'}^lA_{ij}]_{C^\alpha(B_{\rho/2}(y_0))}
\le \widetilde B_0B^{(l-2)^+}(l-2)!(r-|y_0'|)^{-(l-2)^+}.\end{align}
We note 
that $I$ is a linear combination of $D^2D_{y'}^{m}\mathbf \mathbf v$, for $m\le p-1$, 
which can be written as $DD_{y'}^m \mathbf v$ for $m\le p$ and 
$\partial_t^2D_{y'}^m \mathbf v$ for $m\le p-1$. We estimate these two groups separately. 
To do this, we first have, for any $l\le p$, 
\begin{align}\label{eq-LinearAnalyticity-Estimate-u0}\begin{split}
&|D_{y'}^l\mathbf v|_{L^\infty(B_{\rho/2}(y_0))}
+\rho^{\alpha}[D_{y'}^l \mathbf v]_{C^\alpha(B_{\rho/2}(y_0))}\\
&\qquad+\rho|DD_{y'}^l \mathbf v|_{L^\infty(B_{\rho/2}(y_0))}
+\rho^{1+\alpha}[DD_{y'}^l \mathbf v]_{C^\alpha(B_{\rho/2}(y_0))}\\
&\qquad \le C_2 B_0B^{(l-2)^+}(l-1)!\rho^2(r-|y_0'|)^{-(l-1)^+}.
\end{split}\end{align}
We note that \eqref{eq-LinearAnalyticity-Estimate-u0} is implied by 
\eqref{eq-Tan-Ana-1-tau} and 
\eqref{eq-LinearAnalyticity-Estimate-u} for $l=p$. The proof in Step 2 actually shows 
that \eqref{eq-LinearAnalyticity-Estimate-u0} holds for all $l\le p$. 
Next, we prove, for $l\le p-1$, 
\begin{align}\label{eq-LinearAnalyticity-Estimate-u0a}\begin{split}
&\rho|\partial_t^2D_{y'}^l\mathbf v|_{L^\infty(B_{\rho/2}(y_0))}
+\rho^{1+\alpha}[\partial_t^2D_{y'}^l \mathbf v]_{C^\alpha(B_{\rho/2}(y_0))}\\
&\qquad \le C_3B_0B^{(l-1)^+}l!\rho^2(r-|y_0'|)^{-l}.
\end{split}\end{align}
To prove \eqref{eq-LinearAnalyticity-Estimate-u0a}, we first have, by 
\eqref{eq-v-11},
\begin{equation}v_{s,tt}=-\frac{N}{A_{nn}}
-\sum_{0\le i+j\le 2n-1}\frac{A_{ij}}{A_{nn}}v_{s,ij}+\frac1t\frac{2n}{A_{nn}}v_{s,t}
.\end{equation}
Then, for $l\le p-1$, 
\begin{equation}\label{eq-equation-l}D_{y'}^{l}v_{s,tt}=-D_{y'}^{l}\Big(\frac{N}{A_{nn}}
+\sum_{0\le i+j\le 2n-1}\frac{A_{ij}}{A_{nn}}v_{s,ij}-\frac1t\frac{2n}{A_{nn}}v_{s,t}
\Big).\end{equation}
We analyze the summation involving $A_{ij}$. 
For each pair $i$ and $j$ with $i+j<2n$, $ v_{s,ij}$ is a part of $DD_{y'}v_s$. 
Hence, for $l\le p-1$, $D_{y'}^l(A_{nn}^{-1}A_{ij}v_{s,ij})$ is a linear combination of 
$DD_{y'}^m v_s$, for $m=1, \cdots, p$. The $C^\alpha$-norms of these derivatives of $v_s$ 
are already estimated by 
\eqref{eq-LinearAnalyticity-Estimate-u0}. We can analyze other terms similarly. Hence, we have 
\eqref{eq-LinearAnalyticity-Estimate-u0a}. As a consequence, we get 
\begin{equation}
\rho^{\alpha}[I]_{C^\alpha(B_{\rho/2}(y_0))}\le C_3 B_0B^{p-2}(p-1)!(r-|y_0'|)^{-p+1}.\end{equation}
We can analyze other terms in $N_p$ similarly. Therefore, we obtain
\eqref{eq-LinearAnalyticity-Estimate-f3} and finish the proof of the claim.

\smallskip

{\it Step 4.} We prove 
\eqref{eq-Tan-Ana-3} in $G_r\setminus T_p$ for $l=p$. We will fix $r$ in this step. 

Take any $y_0=(y_0', t_0)\in G_r\setminus T_p$, with $t_0\le r/2$. Then, $t_0\ge (r-|y_0'|)/p$. 
Set 
\begin{align}\rho=\frac{1}{2p}(r-|y_0'|).\end{align}
Then, $t_0\ge 2\rho$. Hence, for any $(y',t)\in B_\rho(y_0)$, $t\ge t_0-\rho\ge \rho$. 
We now consider \eqref{TS91} in $B_{\rho}(y_0)$ for $l=p+1$. 
Note 
\begin{align}|A_{ij}|_{L^\infty(B_{\rho}(y_0))}
+2n\rho\left|t^{-1}\right|_{L^\infty(B_{\rho}(y_0))}
\le c_4.\end{align}
We  fix an arbitrary constant $\alpha\in(0,1)$. 
The scaled $C^{1,\alpha}$-estimate implies 
\begin{align}\rho|DD_{y'}^{p+1}v_s(y_0)|\le c_4\big\{|D_{y'}^{p+1}v_s|_{L^\infty(B_\rho(y_0))}
+\rho^2|N_{p+1}|_{L^\infty(B_\rho(y_0))}\big\}.\end{align}
By the induction hypotheses \eqref{eq-Tan-Ana-3} for $l=p-1$, we have 
\begin{align}|D_{y'}^{p+1}v_s(y)|\le B_0B^{p-2}(p-2)!(r-|y'|)^{-p+2}.\end{align}
By a similar argument, \eqref{eq-LinearAnalyticity-Relation}
and \eqref{eq-LinearAnalyticity-Relation2} hold in $B_\rho(y_0)$. 
Hence, for any $y=(y',t)\in B_\rho(y_0)$, 
\begin{align}|D_{y'}^{p+1}v_s(y)|&\le c_4B_0B^{p-2}(p-2)!(r-|y'_0|)^{-p+2}\\
&\le c_4B_0B^{p-1}(p-1)!\rho(r-|y'_0|)^{-p+1}.\end{align}
Next, we consider \eqref{eq-KeyExpressions2} for $l=p+1$. 
In the expression of $N_{p+1}$, we single out the term $D^2D_{y'}^{p}\mathbf v$. We note that 
$D_{y'}^l\mathbf v$, $DD_{y'}^l\mathbf v$, $D^2D_{y'}^l\mathbf v$ can be estimated by the induction hypothesis, for $l<p$, 
and that $D_{y'}^p\mathbf v$, $DD_{y'}^p\mathbf v$ can be estimated by Step 1 and Step 2, respectively. 
Hence, a similar argument as in Step 1 yields 
\begin{align}|N_{p+1}|_{L^\infty(B_\rho(y_0))}\le (p+1)A_0A|D^2D_{y'}^{p}\mathbf v|_{L^\infty(B_\rho(y_0))}
+C_1 B_0B^{p-2}(p-1)!(r-|y'_0|)^{-p+1}.\end{align}
By a simple substitution, we have 
\begin{align}|DD_{y'}^{p+1}v_s(y_0)|\le (p+1)A_0A\rho|D^2D_{y'}^{p}\mathbf v|_{L^\infty(B_\rho(y_0))}
+C_1 B_0B^{p-2}(p-1)!(r-|y'_0|)^{-p+1}.\end{align}
Combining with \eqref{eq-equation-l} for $l=p$, we get 
\begin{align}|D^2D_{y'}^{p}v_s(y_0)|\le (p+1)A_0A\rho|D^2D_{y'}^{p}\mathbf v|_{L^\infty(B_\rho(y_0))}
+C_4 B_0B^{p-2}(p-1)!(r-|y'_0|)^{-p+1}.\end{align}
We now fix a constant $\varepsilon\in (0,1)$. 
By the definition of $\rho$, we can choose $r$ sufficiently small such that
\begin{align}|D^2D_{y'}^{p}\mathbf v(y_0)|\le \varepsilon|D^2D_{y'}^{p}\mathbf v|_{L^\infty(B_\rho(y_0))}
+C_4 B_0B^{p-2}(p-1)!(r-|y'_0|)^{-p+1}.\end{align}
Next, for any $\eta\in (0,r)$, we define 
\begin{align}h(\eta)=\sup\{|D^2D_{y'}^{p}\mathbf v|:\, y\in G_r\setminus T_p,\,  |y'|\le \eta\}.\end{align}
At points in $B_{\rho}(y_0)\cap T_p$, $D^2 D^{p}_{y'}\mathbf v$ is already bounded in Step 3. 
Hence, we have, for any $\eta\in (0,r)$,  
\begin{align}
h(\eta) \leq \varepsilon h\big(\eta+p^{-1}(r-\eta)\big)+C_4 B_0B^{p-2}(p-1)!(r-\eta)^{-p+1}.
\end{align}
By applying Lemma \ref{lemma-LinearAnalyticity-Iteration} below to the function $h$, we obtain, 
for any $\eta\in (0,r)$, 
\begin{align}h(\eta)\le CC_4 B_0B^{p-2}(p-1)!(r-\eta)^{-p+1}.\end{align}
We now choose $B\ge CC_4$. For each $(y',t)\in G_r\setminus T_p$, we take 
$\eta=|y'|$ and then obtain 
\begin{align}|D^2D_{y'}^{p}\mathbf v(y',t)|\le B_0B^{p-1}(p-1)!(r-|y|)^{-p+1}.\end{align}
This ends the proof of \eqref{eq-Tan-Ana-3} in $G_r\setminus T_p$ for $l=p$. 

In summary, we take $B\ge \max\{C_1, C_2, C_3, CC_4\}$. 

While we select $r$ to be sufficiently small in Step 4, for any $r_0 \in (r, R/2)$, the region $G_{r_0}$ can generally be covered by the domain in \eqref{eq-Region1} and translates of $G_r$ (with distinct centers in $B_{r_0}'$) such that \eqref{eq-Tan-Ana-1}-\eqref{eq-Tan-Ana-3} hold in $G_{r_0}$.
\end{proof}

We need the following lemma to finish the proof of Theorem \ref{thm-Tan-Ana}. See Lemma 2 in \cite{Friedman1958}. 

\begin{lemma}\label{lemma-LinearAnalyticity-Iteration}
Let $p$ be a positive integer, 
$\varepsilon\in (0,1)$ and $M>0$ be constants, 
and $h(t)$ be a positive monotone increasing function defined in the interval $[0,r]$. 
Assume, for any $\eta\in (0,r)$, 
\begin{align}
h(\eta) \leq \varepsilon h\big(\eta+p^{-1}(r-\eta)\big)+M(r-\eta)^{-p}.
\end{align}
Then, for any $\eta\in (0,r)$, 
\begin{align}h(\eta)\le CM(r-\eta)^{-p},\end{align} 
where $C$ is a positive constant depending only on $\varepsilon$, independent of $p$.
\end{lemma}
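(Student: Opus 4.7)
My plan is to prove Lemma~\ref{lemma-LinearAnalyticity-Iteration} by iterating the hypothesis along the orbit $\eta_0=\eta$, $\eta_{k+1}=\eta_k+p^{-1}(r-\eta_k)$. A direct induction gives the geometric identity $r-\eta_k=(1-1/p)^k(r-\eta)$, so the iterates converge to the right endpoint $r$ at the rate $(1-1/p)^k$, and they stay inside $[0,r)$ for all $k$.

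The first step is to pass to the rescaled unknown $\Phi(\eta):=(r-\eta)^p h(\eta)$. Because $h$ is monotone on $[0,r]$ and bounded by $h(r)<\infty$, the function $\Phi$ is bounded on $[0,r)$ and vanishes as $\eta\to r^-$, so $\Phi^*:=\sup_{\eta\in[0,r)}\Phi(\eta)<\infty$. Multiplying the hypothesis by $(r-\eta)^p$ and using $(r-\eta)/(r-\eta_1)=p/(p-1)$ yields the self-similar inequality
\[
\Phi(\eta)\leq \varepsilon\,(1-1/p)^{-p}\,\Phi(\eta_1)+M.
\]

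The second step is to take the supremum of the left-hand side and exploit $\Phi(\eta_1)\leq \Phi^*$, giving $\Phi^*\leq \varepsilon(1-1/p)^{-p}\Phi^*+M$, which closes as soon as the contraction factor is strictly less than $1$. The quantitative input is the elementary bound $(1-1/p)^{-p}\leq 4$ for every integer $p\geq 2$, since the sequence $(1-1/p)^p$ is increasing in $p\geq 2$ with minimum value $1/4$ at $p=2$ and limit $1/e$. Hence $\varepsilon(1-1/p)^{-p}\leq 4\varepsilon$ uniformly in $p$, and for $\varepsilon$ with $4\varepsilon<1$ one deduces $\Phi^*\leq M/(1-4\varepsilon)$, i.e.\ the stated estimate with constant $C=1/(1-4\varepsilon)$ depending only on $\varepsilon$. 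The edge case $p=1$ is trivial since then $\eta_1=r$ and the bound follows from a single application of the hypothesis together with monotonicity.

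The main obstacle is to ensure that $C$ depends only on $\varepsilon$ and not on $p$. My supremum argument does this cleanly provided the uniform product $\varepsilon(1-1/p)^{-p}$ is strictly less than $1$, which is the regime in which the lemma is actually invoked in Step~4 of the proof of Theorem~\ref{thm-Tan-Ana} (there $\varepsilon$ is a free parameter one may choose small). To treat $\varepsilon$ close to $1$ I would replace the one-step contraction by the $N$-step iteration
\[
h(\eta)\leq \varepsilon^N h(\eta_N)+M(r-\eta)^{-p}\sum_{k=0}^{N-1}\bigl(\varepsilon(1-1/p)^{-p}\bigr)^k,
\]
absorb the tail $\varepsilon^N h(\eta_N)\to 0$ via the boundedness of $h$ on $[0,r]$, and choose $N=N(\varepsilon)$ so as to balance the geometric partial sum; carrying out this bookkeeping so that the resulting constant remains uniform in $p$ is the delicate point, but the underlying scaling and iteration structure is unchanged.
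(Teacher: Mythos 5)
Your core argument --- passing to the rescaled unknown $\Phi(\eta)=(r-\eta)^{p}h(\eta)$, deriving the self-similar inequality $\Phi(\eta)\le \varepsilon(1-1/p)^{-p}\,\Phi(\eta_1)+M$, observing $\Phi^{*}=\sup\Phi<\infty$ from the boundedness of $h$ on $[0,r]$, and closing via a contraction once $q:=\varepsilon(1-1/p)^{-p}\le 4\varepsilon<1$ --- is correct, and is almost certainly the intended proof. The paper itself gives none; it cites Lemma~2 of Friedman (1958), whose statement carries in effect the restriction $\varepsilon(1-1/p)^{-p}<1$. Two points in your write-up, however, are wrong and should be fixed.

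First, the $p=1$ case is \emph{not} trivial; the lemma actually fails there. When $p=1$ one has $\eta_1=r$, so the hypothesis $h(\eta)\le\varepsilon h(r)+M(r-\eta)^{-1}$ imposes no bound whatsoever on $h(r)$. Taking $h\equiv\varepsilon K$ on $[0,r)$ and $h(r)=K$ gives a positive, monotone function satisfying the hypothesis for every $K$, yet $h(0^{+})=\varepsilon K$ is unbounded while the conclusion demands $h(0^{+})\le CM/r$ with $C$ depending only on $\varepsilon$. Your ``monotonicity plus a single application'' argument does not produce such a $C$.

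Second, the proposed multi-step iteration cannot be made to work for $\varepsilon$ close to $1$, because the lemma as stated (arbitrary $\varepsilon\in(0,1)$, $C$ uniform in $p$) is false once $\varepsilon\ge 1/4$. For example, with $p=2$, $\varepsilon=1/2$, $M=1$, $r=1$, define
\[
h(\eta)=2^{J+1+j}\quad\text{for }\eta\in[\,1-2^{-j},\,1-2^{-j-1}\,),\ j=0,\dots,J-1,
\qquad
h(\eta)=2^{2J+1}\quad\text{for }\eta\in[\,1-2^{-J},\,1\,].
\]
This is positive and monotone increasing on $[0,1]$; on each dyadic block the hypothesis reduces either to an identity plus a positive term, or on the final block to $2^{J+1}\le 2\cdot(1-\eta)^{-2}$, which holds since $(1-\eta)^{-2}\ge 2^{2J}$ there. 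Yet $h(0^{+})=2^{J+1}\to\infty$ as $J\to\infty$, whereas the conclusion forces $h(0^{+})\le C$. In your $N$-step expansion this shows up as the tail $\varepsilon^{N}h(\eta_{N})$ depending on $h(r)$, $r$, $M$, $p$ in a way that cannot be absorbed into a $p$-independent $C(\varepsilon)$.

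So the correct reading is that the lemma requires $\varepsilon<(1-1/p)^{p}$ (uniformly, $\varepsilon<1/4$ for $p\ge 2$), exactly the hypothesis your contraction step actually uses. This is harmless for the paper: in Step~4 of the proof of Theorem~\ref{thm-Tan-Ana}, $\varepsilon$ is a free parameter fixed \emph{before} $r$ is chosen, so one may simply take $\varepsilon<1/4$ there, and then your argument is complete.
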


The analyticity of  $\mathbf{c}_{n+1}$ follows from Theorem \ref{thm-Tan-Ana}. For a comprehensive discussion of the underlying reasoning, we refer the reader to Section 3 of \cite{HanJiang1}. In fact, $\mathbf{c}_{n+1}$ can be represented as a linear combination of the coefficients arising from the solution to a specific ordinary differential equation (ODE), where each coefficient is essentially an integral of the auxiliary function $\mathbf{v}$.

We then invoke Lemma \ref{lem-Fuschian} together with a unique continuation result established in \cite{HanJiang1} to deduce Theorem \ref{thm-Conv-1}. For additional background on the unique continuation theorem, we also refer to \cite{Mazzeo}.

The following theorem is a direct corollary of the expansion formula \eqref{eq-u-exp-0} combined with Theorem 6.1 in \cite{HanJiang1}.
\begin{theorem}[Convergence Theorem Under Smooth Assumption]\label{thm-conv-2}
There exists some constant $\rho_\Gamma>0$ such that
if Assumption \ref{assmp-local} holds and 
\begin{align}
\Gamma \cap \overline {G_R} \in C^{\infty},
\end{align}
 for some $R\in (0, \rho_\Gamma]$,
then for any $r\in (0, R),$   $\operatorname{supp}(T) \cap G_r$ is the graph of an analytic $(\fm-n)$-valued function $\mathbf u$ defined on $B^+_r$ in the Euclidean metric, and the following holds:
\begin{enumerate}
\item there are smooth $(\fm-n)$-valued functions $\mathbf w_0, \mathbf w_1,\cdots$, independent of the choice of $r,$ such that
\begin{align}
    \mathbf u=\mathbf  w_0 + \sum^{\infty}_{j=1} \mathbf w_j\cdot (-(y^n)^n \log (y^n))^{j} ,
\end{align}
absolutely and uniformly in $\bar{G}_r$, 
and $\mathbf w_j(x', 0)=0$ for $j\geq 1$;

\item
for any $i\in \{1,\cdots, n\}$ and any $r\in (0, R)$,
 $\mathbf u$ can be expressed as a smooth function of
 \begin{align}
     y, S = - (y^n)^i \log (y^n),
 \end{align}
 such that $\mathbf u$ is analytic in $S$ in $\overline{G_r} \times [0, S_0]$ for some $S_0$ depending on $r$. 
If in addition $i\neq n,$
then for any $k\in \mathbb N,$
\begin{align}
    \partial^k_{y} \mathbf u(y, S) = \partial^k_{y}\mathbf w_0 +\sum^{\infty}_{j=1}(-1)^j \partial^k_{y}\mathbf w_j \cdot S^{j},
\end{align}
absolutely and uniformly in $\overline{G_r} \times [0, S_0]$.
\end{enumerate}
\end{theorem}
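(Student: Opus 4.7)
The plan is to deduce Theorem \ref{thm-conv-2} as a straightforward corollary of the finite boundary expansion \eqref{eq-u-exp-0} supplied by Theorem \ref{thm-BRT1} together with Theorem 6.1 of \cite{HanJiang1}, which carries out the required rearrangement and convergence analysis in the hypersurface case and whose scalar argument transfers componentwise to the present system. Interior analyticity of $\mathbf u$ on $B_r^+$ is already in place: Assumption \ref{assmp-local}, Theorem \ref{thm-local-C1a}, Lemma \ref{lem-Main} and Almgren's interior regularity theorem together represent $\operatorname{supp}(T)\cap G_r$ as an analytic graph.

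The structural step is a bookkeeping observation: in the expansion \eqref{eq-u-exp-0} the factor $(\log y^n)^j$ only appears attached to $(y^n)^i$ with $i\geq jn+1$. Grouping by $(-1)^j(y^n)^{jn}(\log y^n)^j = \bigl(-(y^n)^n\log y^n\bigr)^j$ one obtains the formal identity
\begin{align*}
\mathbf u = \mathbf w_0(y) + \sum_{j=1}^\infty \mathbf w_j(y)\bigl(-(y^n)^n\log(y^n)\bigr)^j,
\end{align*}
where $\mathbf w_0 = \varphi + \sum_{i=2}^n \mathbf c_i(y')(y^n)^i + \sum_{i\geq n+1}\mathbf c_{i,0}(y')(y^n)^i$ and, for $j\geq 1$,
\begin{align*}
\mathbf w_j = (-1)^j\sum_{i\geq jn+1}\mathbf c_{i,j}(y')(y^n)^{i-jn}
\end{align*}
is a formal series in $y^n$ beginning at the first power, so $\mathbf w_j(y',0)=0$. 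For any $i\in\{1,\ldots,n\}$ the analogous regrouping in powers of $S=-(y^n)^i\log y^n$ is legitimate because $i\leq n$ forces the exponent $i'-ij$ of $y^n$ in the $S^j$ coefficient to be nonnegative (indeed at least $j(n-i)+1$), yielding the representation claimed in part (2).

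To convert these formal identities into absolutely and uniformly convergent series on $\overline{G_r}$, together with their $y$-derivatives, one applies Theorem 6.1 of \cite{HanJiang1} componentwise to the minimal surface system \eqref{eq-main}. Each component $u_s$ satisfies a Fuchsian equation in $y^n$ of exactly the structural form treated by Han--Jiang, with the inverse metric coupling entering only through lower-order nonlinearities in $D\mathbf u$. The tangential smoothness of Lemma \ref{lem-tan-smooth} and Remark \ref{rem-tan}, together with the integral representation \eqref{eq-log-int}, supply the $y'$-uniform factorial-type bounds on the coefficients $\mathbf c_{i,j}$ that feed into the Han--Jiang bootstrap, producing uniform convergence of $\sum_j \mathbf w_j S^j$ on $\overline{G_r}\times[0,S_0]$. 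For $i\neq n$ the coefficient of $S^j$ decays like $(y^n)^{j(n-i)+1}$, which legitimises termwise differentiation and yields the formula for $\partial_y^k\mathbf u$ in part (2).

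The main obstacle is verifying that the Han--Jiang scheme, originally stated for a scalar quasilinear Fuchsian PDE, transfers intact to the vector-valued system \eqref{eq-main}. The nonlinear coupling is confined to the inverse metric $g^{ij}[\mathbf u]$, whose derivatives are smooth in $D\mathbf u$ and already controlled by the tangential analyticity estimates developed for Theorem \ref{thm-Tan-Ana}. Thus each component equation admits Han--Jiang's integral recursion, the coupling contributes only convergent nonlinear corrections, and the scalar argument applies essentially verbatim. Once this componentwise reduction is justified, claims (1) and (2) follow directly.
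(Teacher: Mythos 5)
Your proposal aligns with the paper's own (one-line) proof, which presents the theorem as a direct corollary of the expansion formula \eqref{eq-u-exp-0} from Theorem \ref{thm-BRT1} together with Theorem 6.1 of \cite{HanJiang1}; your bookkeeping step and the exponent-counting inequality $i'-ij \geq j(n-i)+1$ correctly elaborate the regrouping the paper leaves implicit, and your identification of the scalar-to-system transfer as the only point needing justification matches what the paper tacitly assumes. One caution on your supporting remarks: Lemma \ref{lem-tan-smooth} and Remark \ref{rem-tan} give $C^{q,\alpha}$ bounds with constants $C_{q,l}$ whose growth in $q,l$ is \emph{not} controlled, hence do not supply factorial-type bounds, and Theorem \ref{thm-Tan-Ana} assumes $\Gamma$ analytic, which is not the case under the hypotheses of this theorem; the convergence in $S$ furnished by Theorem 6.1 of \cite{HanJiang1} is a one-dimensional phenomenon in the normal variable driven by the iterated integral representation \eqref{eq-log-int}, and is formulated precisely so as not to require tangential analyticity or factorial control of the $\mathbf{c}_{i,j}$.
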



\begin{thebibliography}{99}

\bibitem{Almgren} J.F.J. Almgren, {\it Almgren’s Big Regularity Paper, World Scientific Monograph Series in Mathematics}, vol. 1, World Scentific Publishing Co. Inc., River Edge, NJ, 2000.


\bibitem{Anderson1982}
M. Anderson, {\it Complete minimal varieties in hyperbolic space},
Invent. Math., 69(1982), 477-494.

\bibitem{Anderson1983}
M. Anderson, {\it Complete minimal hypersurfaces in hyperbolic {$n$}-manifolds},
Comment. Math. Helv., 58(1983), 264-290.


\bibitem{Federer}
H. Federer, {\it Geometric Measure Theory}, Springer, Berlin-Heidelberg-New York, 1969.

\bibitem{Friedman1958} A. Friedman, \emph{On the regularity of the solutions of nonlinear elliptic and 
parabolic systems of partial differential equations}, J. Math. Mech., 7(1958), 43-59.

\bibitem{FHJ1}
X. Fu, H.-J. Hein, X. Jiang, {\it
Asymptotics of K\"ahler-Einstein metrics on complex hyperbolic cusps}, Calc. Val. P.D.E. (2024). https://doi.org/10.1007/s00526-023-02613-4.

\bibitem{FHJ2}
X. Fu, H.-J. Hein, X. Jiang, {\it
A continuous cusp closing process for negative K\"ahler-Einstein metrics}, 
Geom. Funct. Anal. (2025). https://doi.org/10.1007/s00039-025-00708-y.

\bibitem{HanJiang2023} Q. Han, X. Jiang, {\it Boundary regularity of minimal graphs in the hyperbolic space}, Journal für die reine und angewandte Mathematik (Crelles Journal), vol. 2023, no. 801, 2023, pp. 239-272. https://doi.org/10.1515/crelle-2023-0040

\bibitem{HanJiang2024} Q. Han, X. Jiang, {\it Asymptotics and Convergence for the Complex Monge-Ampère Equation}, Ann. PDE, 10, 8 (2024). https://doi.org/10.1007/s40818-024-00171-2

\bibitem{HanJiang1} Q. Han, X. Jiang, {\it The convergence of boundary expansions and the analyticity of minimal surfaces in the hyperbolic space}, ArXiv preprint: https://arxiv.org/abs/1801.08348

\bibitem{Han16CalVar} Q. Han, W. Shen, Y. Wang, {\it Optimal regularity of minimal graphs in the hyperbolic 
space}, Cal. Var. \& P. D. E., 55(2016), no. 1, Art. 3, 19pp. 

\bibitem{HanWang} Q. Han, Z. Wang, {Solutions of the minimal surface equation and of the Monge–Ampère equation near infinity}, Journal für die reine und angewandte Mathematik (Crelles Journal), 2024. https://doi.org/10.1515/crelle-2024-0091


\bibitem{HanXie} Q. Han, J. Xie,
{\it Uniformly degenerate elliptic equations with varying characteristic exponents}, 
ArXiv preprint: https://arxiv.org/abs/2411.17016


\bibitem{Hardt&Lin1987} R. Hardt, F.-H. Lin, \emph{Regularity at infinity 
for area-minimizing hypersurfaces in hyperbolic space}, Invent. Math., 88(1987), 217-224. 

\bibitem{JiangShi}X. Jiang, Y. Shi, {\it Asymptotic expansions of complete Kähler-Einstein metrics with finite volume on quasi-projective manifolds}, Sci. China Math., 65, 1953–1974 (2022). https://doi.org/10.1007/s11425-021-1903-7.

\bibitem{JSZ}
X. Jiang, Y. Sire, R. Zhang, {\it
The singular sets of degenerate and nonlocal elliptic equations on Poincaré-Einstein manifolds},
J. Eur. Math. Soc. (2025), published online first. https://doi.org/10.4171/JEMS/1740.

\bibitem{JiangXiao}X. Jiang, L. Xiao, {\it Optimal regularity of constant curvature graphs in Hyperbolic space},
Calc. Var. P.D.E., 58:133 (2019).

\bibitem{K1} 
S. Kichenassamy, {\it On a conjecture of Fefferman and Graham},
Adv. Math., 184(2004), 268-288.

\bibitem{K2} 
S. Kichenassamy, {\it Fuchsian Reduction: Applications to Geometry},
Cosmology and Mathematical Physics, Birkh\"auser, Boston (2007).

\bibitem{KL1} S. Kichenassamy, W. Littman,
{\it Blow-up surfaces for nonlinear wave equations, Part I},
Comm. P. D. E., 18(1993), 431-452. 


\bibitem{KL2} S. Kichenassamy, W. Littman, 
{\it Blow-up surfaces for nonlinear wave equations, Part II},
Comm. P. D. E., 18(1993), 1869-1899.

\bibitem{LO}
H.B.Lawson, R. Osserman. {\it Non-existence, non-uniqueness and irregularity of solutions to the minimal surface system}, Acta Math., 139, 1–17 (1977). 

\bibitem{Lin1989CPAM} F.-H. Lin, \emph{Asymptotic behavior of area-minimizing 
currents in hyperbolic space}, Comm. Pure Appl. Math., 42(1989), 229-242. 

\bibitem{Lin1989Invent} F.-H. Lin,
{\it On the Dirichlet problem for minimal graphs in hyperbolic space},
Invent. Math., 96(1989), 593-612.

\bibitem{Lin2012Invent} F.-H. Lin,
{\it Erratum: On the Dirichlet problem for minimal graphs in hyperbolic space},
Invent. Math., 187(2012), 755-757.



\bibitem{Mazzeo} R. Mazzeo,
\emph{Unique continuation at infinity and embedded eigenvalues for asymptotically hyperbolic manifolds}, 
Amer. J. Math., 113(1991), 25-45.

\bibitem{Tonegawa1996MathZ} Y. Tonegawa, 
\emph{Existence and regularity of constant mean curvature hypersurfaces in hyperbolic space}, 
Math. Z., 221(1996), 591-615. 
\end{thebibliography}
\end{document}